\theoremstyle{plain}
\newtheorem{thm}{Theorem}[section]
\newtheorem{cor}[thm]{Corollary}
\newtheorem{lem}[thm]{Lemma}
\newtheorem{defn}[thm]{Definition}
\newtheorem{prob}[thm]{Problem}
\theoremstyle{definition}
\theoremstyle{remark}
\newtheorem{rem}[thm]{Remark}
\newtheorem{ex}[thm]{Example}
\newcommand{\RR}{\mathbb R}
\newcommand{\PP}{\mathbb P}
\newcommand{\CC}{\mathbb C}
\newcommand{\rank}{\operatorname{rank}}
\title{The Geometry of Rank Drop in a Class of \\Face-Splitting Matrix Products: Part II}
\author{Erin Connelly}
\address{Department of Mathematics, University of Washington, Seattle}
\email{erin96@uw.edu}
\author{Rekha R. Thomas}
\address{Department of Mathematics, University of Washington, Seattle}
\email{rrthomas@uw.edu}
\author{Cynthia Vinzant}
\address{Department of Mathematics, University of Washington, Seattle}
\email{vinzant@uw.edu}
\begin{document}
\maketitle
\begin{abstract}
Given $k$ points 
$(x_i,y_i) \in \PP^2 \times \PP^2$, we characterize rank deficiency of the $k \times 9$ matrix $Z_k$ with rows $x_i^\top \otimes y_i^\top$ in terms of the geometry of the point configurations 
$\{x_i\}$ and $\{y_i\}$. In~\cite{rankdrop1} we presented results for the cases $k\le 6$. In this paper we deal with the remaining cases $k=7,8$ and $9$. The results involve the interplay of quadric surfaces, cubic curves and Cremona transformations \cite{rankdrop1}. 
\end{abstract}

\section{Introduction}
We are interested in solving the following problem:
\begin{prob} \label{prob:main} 
Given $k \leq 9$ points $(x_i,y_i)\in\PP^2\times\PP^2$, 
consider the $k \times 9$ 
matrix $Z_k$ whose rows are $x^\top_i \otimes y^\top_i$ for $i=1, \ldots, k$, i.e., 
\[
Z_k= \begin{bmatrix} x^\top_1 \otimes y^\top_1 \\ \vdots \\ x^\top_k \otimes y^\top_k \end{bmatrix}.
\]
Delineate the geometry of point configurations  $\{x_i\}$ and $\{y_i\}$ for which $\rank(Z_k) < k$.
\end{prob}
Note that Problem~\ref{prob:main} can be rephrased geometrically and generalized to any algebraic variety.
\begin{prob}\label{prob:geom}
    Given $k \leq 9$ points $(x_i,y_i)\in\PP^2\times\PP^2$, 
    delineate the  geometry of the point configurations  $\{x_i\}$ and $\{y_i\}$ for which the subspace spanned by the images of these points under the Segre embedding of  $\PP^2\times\PP^2$ in $\PP^8$
    has dimension less than $k-1$. 
\end{prob}

Problem~\ref{prob:main} arises in the study of reconstruction problems in 3D computer vision. For background on the problem and related work we direct the reader to part I of this work~\cite{rankdrop1} where Problem~\ref{prob:main} was solved for  $k \leq 6$. The results relied on the classical invariant theory of points in $\PP^2$ and the theory of cubic surfaces. In this paper we complete the characterization for the remaining cases of $k=7,8,9$. Once again, the results can be phrased in terms of classical algebraic geometry and invariants. 

\subsection*{Semi-Genericity}
Throughout this paper, we will concern ourselves with point configurations that are \textit{semi-generic}; a configuration of $k$ point pairs $(x_i,y_i)$ is \textit{semi-generic} if every subset of $k-1$ point pairs is fully generic. That is, we say that a property holds for a semi-generic choice of $(x_i,y_i)\in (\PP^2\times \PP^2)^k$ if there is a nonempty Zariski open set $\mathcal{U}\subseteq (\PP^2\times \PP^2)^{k-1}$ so that the property holds whenever $\{(x_i,y_i):i\neq j\}$ lies in $\mathcal{U}$ for all $j=1,\hdots, k$. 
Despite the name, semi-genericity is actually a stronger notion than usual genericity. 
We use this name because often the property of interest for points in $(\PP^2\times \PP^2)^k$ is that two algebraic conditions coincide, whereas generic points satisfy neither algebraic condition. 
As a small example of this usage, let us instead consider a semi-generic pair of points $x_1, x_2$ in the line $\RR$. Consider $f(x_1,x_2) = x_1(x_2-1)(x_1-x_2)$. Then $f(x_1,x_2)=0$ if and only if $x_1=0$, $x_2=1$, or $x_1=x_2$.  For generic $(x_1,x_2)$, $f(x_1, x_2)\neq 0$. Semi-genericity only allows us to exclude algebraic conditions on $x_1$ and $x_2$ individually. 
In this example, a semi-generic pair of points $(x_1,x_2)$ satisfies $f(x_1,x_2)=0$ if and only if $x_1=x_2$. 
This holds whenever $x_1, x_2\in\mathcal{U} = \RR\backslash\{0,1\}$.

\subsection*{Summary of results and organization of the paper.} 
In \cite{rankdrop1} we studied Problem~\ref{prob:main} algebraically by decomposing the ideal generated by the maximal minors of $Z_k$ into its prime components and examining only those components that did not correspond to rank drop conditions for a submatrix of $Z_k$ with at most $k-1$ rows, called  
{\em inherited conditions}, for the rank deficiency of $Z_k$.  
Through this we obtained both algebraic conditions that completely characterized rank drop, and geometric conditions that characterized rank drop under mild genericity assumptions.
This method cannot be applied to the cases of $k=7,8,9$ due to computational limitations. Additionally, in these cases, the novel component of rank drop has a greater dimension than all the components of inherited conditions. 
Previously, for $k\leq 5$ the novel component had a strictly lower dimension than the variety of inherited conditions, and for $k=6$ the novel component had equal dimension to that of the inherited conditions variety. For this reason, we largely concern ourselves only with the geometric characterization of rank drop for semi-generic configurations with $k=7,8,9$, rather than an algebraic characterization beyond the vanishing of the maximal minors of $Z_k$.

In Section~\ref{sec:background} we establish a number of facts about Cremona transformations, cubic curves, and projective reconstructions that we will use throughout the paper. In Section~\ref{sec:k=8} we study the problem for $k=8$ and prove that $Z_k$ is rank-deficient exactly when there is a quadratic Cremona transformation $f:\PP^2\dashrightarrow\PP^2$ such that $f(x_i)= y_i$ for all $i$ (Theorem~\ref{thm:k=8 main result}). To do so, we establish a correspondence between three sets: lines in the nullspace of $Z_k$, quadrics passing through a projective reconstruction of the input point pairs, and Cremona transformations sending $x_i\mapsto y_i$ (Theorem~\ref{thm:trinity correspondence (vision)} which depends on Theorem~\ref{thm:trinity correspondence}). We refer to this as the {\em trinity correspondence} and it is the foundation for all of our results in this paper. In Section~\ref{sec:k=7} we study the problem for $k=7$ and prove that $Z_k$ is rank-deficient exactly when there are cubic curves in each copy of $\PP^2$, passing through all seven points, and an isomorphism between these curves that sends $x_i\mapsto y_i$ (Theorem~\ref{thm:k=7 simple}). We further prove that this occurs exactly when seven particular cubic curves in each copy of $\PP^2$ are coincident and we provide an algebraic characterization for when this occurs (Theorem~\ref{thm: k=7 central theorem}). In Section~\ref{sec:k=9} we answer Problem~\ref{prob:main} for $k=9$, which is largely straight-forward (Theorem~\ref{thm:k=9}). 
We summarize our results in Section~\ref{sec:conclusion} and state a geometric consequence about reconstructions of semi-generic point pairs of size six, seven and eight.

\subsection*{Acknowledgments} This paper is a sequel to \cite{rankdrop1} written with Sameer Agarwal and Alperen Ergur. 
The third author was partially supported by NSF grant No.~DMS-2153746.
\section{Background and Tools}\label{sec:background}
\subsection{Quadratic Cremona Transformations and Cubic Curves}

\begin{defn}
A {\bf quadratic Cremona transformation} of $\PP^2$ is a birational automorphism $f \,:\, \PP^2 \dashrightarrow \PP^2$ defined as $f(x) = (f_1(x):f_2(x):f_3(x))$ where $f_1, f_2,f_3$ are homogeneous quadratic polynomials in $x = (x_1, x_2,x_3)$.
\end{defn}
We will drop the word ``quadratic'' from now on as all the Cremona transformations we consider will be quadratic. 
Each Cremona transformation can be obtained by blowing up three points $a_1,a_2,a_3$ in the domain (called {\em base points}) at which the transformation is not defined and collapsing three lines $\gamma_1, \gamma_2, \gamma_3$ (called {\em exceptional lines}) which contain pairs of base points: for distinct $i,j,k$, the line $\gamma_i$ contains $a_j,a_k$.  Generically, the base points and exceptional lines of a Cremona transformation will all be distinct; when they are not all distinct, the transformation is said to be degenerate. In this paper we will only consider non-degenerate Cremona transformations.  

The inverse of a Cremona transformation $f$ is also a Cremona transformation with base points $b_1,b_2,b_3$ and exceptional lines $\tau_1, \tau_2, \tau_3$ in the codomain of $f$. The map $f$ sends $\gamma_i \mapsto b_i$ while 
$f^{-1}$ sends $\tau_i \mapsto a_i$. For simplicity we will often refer to both the base points in the domain and the base points in the codomain (i.e. the base points of $f^{-1}$) as the base points of $f$. The standard Cremona transformation is 
\begin{align} \label{eq:std cremona}
    f(x_1,x_2,x_3)=(x_2x_3:x_1x_3:x_1x_2)
\end{align}
which has base points $(1:0:0), (0:1:0),(0:0:1)$ and exceptional lines 
$x_i=0$ for $i=1,2,3$.  This transformation is an involution since it is its own inverse, and the base points and exceptional lines of $f^{-1}$ are again 
$(1:0:0), (0:1:0),(0:0:1)$ and 
$x_i=0$ for $i=1,2,3$. All Cremona transformations differ from the standard one only by projective transformations as stated below.
\begin{lem}
Let $g$ be a Cremona transformation and $f$ be the standard Cremona involution. Then there are projective transformations $H_1,H_2$ such that $g=H_1\circ f\circ H_2$.
\end{lem}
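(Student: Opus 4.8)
The plan is to normalize $g$ by a projective change of coordinates so that its three base points in the domain become the coordinate points, at which stage $g$ must coincide with the standard involution up to a linear map on the codomain. Write $e_1=(1:0:0)$, $e_2=(0:1:0)$, $e_3=(0:0:1)$, and let $a_1,a_2,a_3$ be the base points of $g$ in the domain. Since $g$ is non-degenerate, these three points are distinct and not collinear: were they collinear on a line $\ell$, every conic through them would contain $\ell$ as a component, so $g$ would reduce to a projective transformation, contradicting that it is genuinely quadratic. Because $\PGL_3$ acts transitively on triples of points in general position, I can choose $H_2\in\PGL_3$ with $H_2(a_i)=e_i$ for $i=1,2,3$.

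Next I would examine $g' := g\circ H_2^{-1}$. This is again a quadratic Cremona transformation (pre-composing a quadratic map with an invertible linear map preserves degree and birationality), and its domain base points are $H_2(a_i)=e_i$. The structural fact I would invoke from the classical theory of plane Cremona transformations is that a non-degenerate quadratic map is the map associated to the net (two-dimensional linear system) of conics through its three base points. The conics vanishing at $e_1,e_2,e_3$ are exactly the linear combinations of $x_2x_3$, $x_1x_3$, $x_1x_2$: imposing vanishing at $e_i$ kills the coefficient of $x_i^2$ in a general conic $\sum_{i\le j}c_{ij}x_ix_j$. Hence each coordinate of $g'$ is a linear combination of these three monomials, so there is a matrix $A$ with $g'(x)=A\,(x_2x_3,x_1x_3,x_1x_2)^\top = A\,f(x)$, that is $g'=A\circ f$.

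Because $g'$ is birational of degree two, the three conics defining it must be linearly independent—otherwise its image would lie in a line—so $A\in\GL_3$ and descends to an element of $\PGL_3$; here I also use that the net of conics through three non-collinear points has no common linear factor, so no cancellation lowers the degree of $g'$. Setting $H_1:=A$ then gives $g'=H_1\circ f$, and unwinding $g'=g\circ H_2^{-1}$ yields $g=H_1\circ f\circ H_2$, as desired.

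The main obstacle is not the linear algebra—identifying the conics through $e_1,e_2,e_3$ and checking invertibility of $A$ is routine—but rather justifying the structural input: that a non-degenerate quadratic Cremona transformation is precisely the map attached to the net of conics through its (non-collinear) base points, and in particular that it has exactly three base points spanning such a net. This is classical, and once it is in hand the normalization argument above is short; the only genericity one must be careful to record is the non-collinearity of the base points, which is exactly what non-degeneracy supplies.
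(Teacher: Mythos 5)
Your proof is correct and follows essentially the same route as the paper: both arguments rest on the fact that the coordinates of $g$ form a basis of the three-dimensional space of conics through the three (non-collinear) base points, and both realize the change of basis by a projective transformation on each side of the standard involution. The only cosmetic difference is that you normalize the base points to the coordinate points first and then read off the monomials $x_2x_3,x_1x_3,x_1x_2$, whereas the paper works directly with the basis $\ell_2\ell_3,\ell_1\ell_3,\ell_1\ell_2$ of line products and factors it as $f\circ H_2$; these are the same argument in a different order.
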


\begin{proof}
 Let $a_1, a_2, a_3 \in \PP^2$ denote the base points of $g$.
The coordinates $(g_1,g_2,g_3)$ of $g$ form a basis for the three-dimensional vector space of quadratics vanishing on the points $a_1,a_2,a_3$. Another basis is $h = (\ell_2 \ell_3, \ell_1\ell_3, \ell_1\ell_2)$ where $\ell_i\in \CC[x,y,z]_1$ defines the line joining $a_j$ and $a_k$ for every labeling $\{i,j,k\} = \{1,2,3\}$.  Therefore there is some invertible linear transformation $H_1$ for which $g = H_1 h$. Similarly, $(\ell_1, \ell_2, \ell_3)$ is a basis for $\CC[x,y,z]_1$ and so there is a linear transformation $H_2$ for which $H_2 (x,y,z) = (\ell_1, \ell_2, \ell_3)$. 
The map $h$ is given by $f\circ H_2$ and so $g =H_1\circ f\circ H_2$.
\end{proof}

Throughout this paper we will be interested in $\PP^2 \times \PP^2$ and we typically denote points in the first $\PP^2$ by $x$ and those in the second $\PP^2$ by $y$. The notation $\PP^2_x$ and $\PP^2_y$ will help keep this correspondence clear.

\begin{lem}\label{lem:basepoints fix monomials to scale}
Let $f \,:\, \PP^2_x \dashrightarrow \PP^2_y$ be a Cremona transformation such that $f$ and $f^{-1}$ have base points $e_1^x=e_1^y=(1:0:0)$, $e_2^x=e_2^y=(0:1:0)$, $e_3^x=e_3^y=(0:0:1)$ in the domain and codomain. Then $f$ has the form
\begin{equation}
    f(x_1,x_2,x_3)=(ax_2x_3: bx_1x_3: cx_1x_2)
\end{equation}
where $a,b,c\in\CC\backslash\{0\}$.
\end{lem}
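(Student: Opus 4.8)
The plan is to pin down the three quadratic coordinates of $f$ in two stages: first using the base points in the domain to fix which monomials can appear, and then using the base points in the codomain to kill all the remaining freedom except a diagonal scaling. First I would observe that since $e_1^x,e_2^x,e_3^x$ are the base points of $f$ in the domain, the quadrics $f_1,f_2,f_3$ must all vanish at $(1:0:0),(0:1:0),(0:0:1)$; a quadratic vanishing at all three coordinate points has no square term $x_1^2,x_2^2,x_3^2$, so it is a combination of $x_2x_3,x_1x_3,x_1x_2$. Equivalently, applying the preceding lemma with $\ell_i=x_i$ (so that the factor $H_2$ is the identity), I would write $f=H\circ f_{\mathrm{std}}$ where $f_{\mathrm{std}}(x)=(x_2x_3:x_1x_3:x_1x_2)$ is the standard involution and $H=(h_{ij})\in\GL_3(\CC)$, i.e.
\[
f(x)=\bigl(h_{11}x_2x_3+h_{12}x_1x_3+h_{13}x_1x_2\,:\,\cdots\,:\,h_{31}x_2x_3+h_{32}x_1x_3+h_{33}x_1x_2\bigr).
\]

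Next I would extract the codomain base points by tracking the exceptional lines. With $a_i=e_i^x$, the exceptional line $\gamma_i$ opposite $a_i$ is $\{x_i=0\}$, and $f$ collapses it to the single point $b_i=f(\gamma_i)$. Restricting to $\{x_1=0\}$, the monomials $x_1x_2$ and $x_1x_3$ vanish, so $f(0:x_2:x_3)=(h_{11}:h_{21}:h_{31})$ is the first column of $H$; likewise $\{x_2=0\}$ and $\{x_3=0\}$ map to the second and third columns. By hypothesis these three images are the codomain base points, which are coordinate points, so each column of $H$ is a nonzero multiple of a standard basis vector — that is, $H$ is a monomial matrix. The matched labeling $b_i=e_i^y$ (with $\gamma_i$ the line opposite $a_i=e_i^x$) forces the $i$-th column of $H$ to be a multiple of the $i$-th basis vector, so $H=\operatorname{diag}(a,b,c)$ with $a,b,c\neq 0$ since $H$ is invertible. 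Substituting back yields exactly $f(x)=(ax_2x_3:bx_1x_3:cx_1x_2)$.

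The main obstacle will be the bookkeeping in the second step: correctly identifying the image of each exceptional line with a column of $H$ and matching the labeling so that no coordinate permutation survives. Indeed, without the index-matching convention the same argument would only give that $H$ is a monomial matrix, and $f$ would have the claimed form merely up to a permutation of the target coordinates (for instance $f(x)=(x_1x_3:x_2x_3:x_1x_2)$ also has all its base points at the coordinate points). I would therefore make explicit that the hypothesis $b_i=e_i^y$ rules this out and reduces the monomial matrix to a diagonal one. The two supporting facts that carry the argument — that a quadric through the three coordinate points omits all squares, and that $f_{\mathrm{std}}$ sends $\{x_i=0\}\mapsto e_i$ — are routine and need only be verified once.
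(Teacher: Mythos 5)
Your proof is correct and takes essentially the same route as the paper's: the domain base points force $f_1,f_2,f_3$ to involve only the mixed monomials $x_2x_3,x_1x_3,x_1x_2$, and the collapsing of each exceptional line onto its index-matched codomain base point then eliminates all but one monomial per coordinate, your factorization $f=H\circ f_{\mathrm{std}}$ being merely a matrix-language repackaging of that coefficient bookkeeping. The one point where you go beyond the paper is in making the index-matching convention $b_i=e_i^y$ explicit (with the permutation counterexample), whereas the paper invokes it silently in the step ``$f(x_1,x_2,0)=(0:0:1)$''.
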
 
\begin{proof}
Suppose $f=(f_1,f_2,f_3)$ where $f_1,f_2,f_3$ are quadratic polynomials. Since $f$ is undefined at the three base points in the domain, it follows that $f_1,f_2,f_3$ contain only the monomials $x_1x_2, x_1x_3, x_2x_3$. Moreover, we know that $f(x_1,x_2,0)=(0:0:1)$. It follows that $f_1,f_2$ do not contain the monomials $x_1x_2$. In examining the other two exceptional lines, we find that $f_1,f_2,f_3$ contain only one monomial each and that $f$ has the desired form.
\end{proof}
We note that the choice of $(a,b,c)$ is equivalent to specifying a single point correspondence $p\mapsto q$, where neither $p$ nor $q$ lie on an exceptional line. It follows that a Cremona transformation has $14$ degrees of freedom: six from the base points in the domain, six from the base points in the codomain, and two from the choice of a single point correspondence.

Next we prove some facts about Cremona transformations and isomorphisms of cubic curves. 

\begin{defn}
Let $f$ be a Cremona transformation with base points $B(f)$. For a 
curve $C\subset\PP^2$, define $f(C) :=\overline{f(C\backslash B(f))}$, and for a given point $p$, let $\nu_p(C)$ be the multiplicity of the curve $C$ at the point $p$. 
\end{defn}

\begin{lem}\label{lem:base points lie on cubic} \cite{diller}
Let $C\subset\PP^2$ be a plane curve of degree $n$ and let $f$ be a Cremona transformation. Then
\begin{equation}
\text{deg}(f(C))=2n-\sum_{p\in B(f)}\nu_p(C).
\end{equation}
In particular, if $C$ is a smooth cubic curve then $f(C)$ is also a cubic curve if and only if the 
base points of $f$ lie on $C$. In this case, $f^{-1}(f(C))=C$ implies that 
the base points of $f^{-1}$ lie on $f(C)$.
\end{lem}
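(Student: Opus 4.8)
\emph{Strategy.} I would compute $\deg(f(C))$ as the cardinality of $f(C)\cap L$ for a generic line $L\subset\PP^2_y$, and pull this intersection back through $f$. Writing $f=(f_1:f_2:f_3)$ with the $f_i$ homogeneous quadrics vanishing on $B(f)=\{a_1,a_2,a_3\}$, the preimage of a line $L=\{\alpha y_1+\beta y_2+\gamma y_3=0\}$ is the conic $D_L=\{\alpha f_1+\beta f_2+\gamma f_3=0\}$, which passes through every base point $a_i$. Since $f$ is birational and restricts to an isomorphism away from the base points and exceptional lines, for generic $L$ the points of $f(C)\cap L$ are in bijection with the points of $(C\cap D_L)\setminus B(f)$, each counted once. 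Bézout's theorem gives $C\cdot D_L=\deg(C)\deg(D_L)=2n$, so
\[
\deg(f(C))=2n-\sum_{i=1}^{3} I_{a_i}(C,D_L),
\]
where $I_p(\cdot,\cdot)$ denotes local intersection multiplicity. The formula follows once I show $I_{a_i}(C,D_L)=\nu_{a_i}(C)$ for generic $L$.

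\emph{The main step.} This local computation is where the real content lies. Using the preceding lemma I may write $f=H_1\circ f_0\circ H_2$ with $f_0$ the standard Cremona involution and $H_1,H_2$ projective transformations; since projective transformations preserve degrees, curve multiplicities and the incidence of base points with $C$, it suffices to prove the identity for $f_0$ and the curve $H_2(C)$. For $f_0(x)=(x_2x_3:x_1x_3:x_1x_2)$ the base points are $e_1,e_2,e_3$ and the conics are $D_L=\{\alpha x_2x_3+\beta x_1x_3+\gamma x_1x_2=0\}$. A direct check shows that each such conic is smooth at $e_i$ for generic $(\alpha,\beta,\gamma)$ and that, as $L$ varies, the tangent direction of $D_L$ at $e_i$ sweeps out all directions. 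Hence for generic $L$ this tangent line avoids the finitely many branches of $C$ at $e_i$, and the standard fact that a smooth curve meeting $C$ transversally to all its branches at $p$ has local intersection multiplicity exactly $\nu_p(C)$ yields $I_{e_i}(C,D_L)=\nu_{e_i}(C)$ (with the convention $\nu_p(C)=0$ when $p\notin C$). I expect the genericity bookkeeping here --- simultaneously making $L$ avoid the exceptional configuration and achieve transversality at all three base points --- to be the only delicate part; everything else is Bézout.

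\emph{Specialization.} Setting $n=3$ with $C$ smooth gives $\nu_p(C)\in\{0,1\}$, so $\deg(f(C))=6-\#(B(f)\cap C)$, and $f(C)$ is a cubic exactly when all three base points of $f$ lie on $C$. For the last assertion, assume this and that $f^{-1}(f(C))=C$. The map $f$ restricts to a birational morphism from the smooth genus-one curve $C$ onto the plane cubic $f(C)$; as a singular plane cubic has geometric genus zero, $f(C)$ is itself a smooth cubic. Applying the already-proven degree formula to $f^{-1}$ and $f(C)$, together with $\deg(f^{-1}(f(C)))=\deg(C)=3$, yields
\[
3=6-\sum_{q\in B(f^{-1})}\nu_q(f(C))=6-\#\big(B(f^{-1})\cap f(C)\big),
\]
so $\#(B(f^{-1})\cap f(C))=3$ and all three base points of $f^{-1}$ lie on $f(C)$.
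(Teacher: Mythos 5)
Your proposal is correct, but there is nothing in the paper to compare it against: the paper does not prove this lemma, it imports it wholesale from the cited reference of Diller. So the relevant comparison is with the standard argument behind that citation, which runs through intersection theory on the blowup: factor $f$ as $\pi_2\circ\pi_1^{-1}$ where $\pi_1$ blows up the three base points, note that the pullback of the line class is $2H-E_1-E_2-E_3$, and pair with the strict transform of $C$ to get $\deg f(C)=2n-\sum_{p\in B(f)}\nu_p(C)$ in one line. Your route is more elementary and stays entirely inside the classical theory of plane curves: pull a generic line $L$ back to the conic $D_L$ of the homaloidal net, apply B\'ezout, and verify the local identity $I_{a_i}(C,D_L)=\nu_{a_i}(C)$ by arranging the (freely varying) tangent of $D_L$ at each base point to avoid the tangent cone of $C$ there. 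That local step, and your genericity bookkeeping, are sound: for generic $L$ the conic $D_L$ meets the exceptional lines only in base points, so $(C\cap D_L)\setminus B(f)$ is carried bijectively, and with multiplicity one, onto $f(C)\cap L$. Your handling of the final assertion is also the right one, and it is the spot where a careless count fails: the equation $\sum_{q\in B(f^{-1})}\nu_q(f(C))=3$ by itself would permit a singular $f(C)$ meeting $B(f^{-1})$ in fewer than three points, and your observation that $f(C)$, being an irreducible plane cubic birational to the smooth genus-one curve $C$, must itself be smooth is exactly what excludes the profiles $(2,1,0)$ and $(3,0,0)$. Two points you gloss over, neither fatal for the paper's use of the lemma: your generic-line count tacitly assumes $C$ is not contracted by $f$ (if $C$ is an exceptional line the formula still checks out, but the bijection argument degenerates and deserves a sentence), and B\'ezout needs $C$ and $D_L$ to share no component, which again holds for generic $L$. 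What your approach buys is a self-contained proof at the level of plane curves; what the blowup proof buys is brevity and an immediate generalization to Cremona maps of any degree.
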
 
Using this, we can prove the following result.
\begin{lem}\label{lem:Cremona transformations induce isomorphisms on cubics}
Let $C$ be a smooth cubic curve and let $f$ be a Cremona transformation with base points $a_1,a_2,a_3\in C$ in the domain and $b_1,b_2,b_3$ in the co-domain. Then $f(C)$ is a smooth cubic curve and $\bar{f}:C\to f(C)$, defined by taking the closure of $f|_{C\backslash B(f)}$, is an isomorphism.
\end{lem}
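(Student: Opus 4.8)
The plan is to leverage Lemma~\ref{lem:base points lie on cubic} to pass from a statement about plane curves to one about abstract smooth curves, where the controlling invariant is the genus. Since the base points $a_1,a_2,a_3$ of $f$ lie on the smooth cubic $C$, Lemma~\ref{lem:base points lie on cubic} immediately gives that $f(C)$ is a plane cubic curve and that the base points $b_1,b_2,b_3$ of $f^{-1}$ lie on $f(C)$. Next I would check that $\bar f$ is genuinely a morphism on all of $C$: the rational map $f$ fails to be defined on $C$ only at the finitely many points of $B(f)$, and a rational map from a smooth projective curve to a projective variety always extends to a morphism. Thus $\bar f$ is a morphism $C\to\PP^2_y$ whose image is the irreducible curve $f(C)$, irreducible because it is the image of the irreducible $C$.

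The heart of the argument is to upgrade $\bar f\colon C\to f(C)$ from a birational morphism to an isomorphism. Because $f$ is birational on $\PP^2$ and, by the previous paragraph, $f(C)$ is a curve (so $\bar f$ does not contract $C$), the map $\bar f$ is generically injective, hence birational onto its image in characteristic $0$. A nonconstant morphism of projective curves is finite, so $\bar f\colon C\to f(C)$ is a finite birational morphism from the smooth curve $C$; that is, it realizes $C$ as the normalization of $f(C)$. Consequently the geometric genus of $f(C)$ equals $g(C)=1$, the genus of a smooth plane cubic. On the other hand, $f(C)$ is an irreducible plane cubic, so its arithmetic genus is $\binom{3-1}{2}=1$. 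Since an irreducible plane curve is smooth exactly when its geometric and arithmetic genera coincide, the equality $p_g(f(C))=1=p_a(f(C))$ forces $f(C)$ to be smooth. Finally, $\bar f$ is a birational morphism between the smooth projective curves $C$ and $f(C)$ and is therefore an isomorphism.

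The step I expect to be the main obstacle is establishing the smoothness of $f(C)$, rather than the comparatively formal bijectivity of $\bar f$: a birational morphism from a smooth curve that is even bijective on points need not be an isomorphism, the normalization of a cuspidal cubic being the cautionary example, so point-counting alone cannot rule out a cusp or node on $f(C)$. The genus comparison above is exactly what sidesteps a local analysis at the base points and at the points where $C$ meets the exceptional lines. As an alternative one could resolve $f$ by blowing up $a_1,a_2,a_3$ to obtain a del Pezzo surface $S$ with birational morphisms to $\PP^2_x$ and $\PP^2_y$, identify the strict transform $\tilde C\subset S$ with $C$, and verify that $\tilde C$ meets each contracted exceptional curve transversally in a single point; this route is more geometric but requires precisely the transversality bookkeeping, equivalently that the exceptional lines are not tangent to $C$, that the genus argument renders unnecessary.
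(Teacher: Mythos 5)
Your proof is correct, and while it shares the paper's skeleton---both begin by invoking Lemma~\ref{lem:base points lie on cubic} to conclude that $f(C)$ is a plane cubic (with $b_1,b_2,b_3\in f(C)$), and both end with the standard fact that a birational regular map between nonsingular projective curves is an isomorphism---the middle of your argument is genuinely different and, in fact, more complete than the paper's. The paper settles the isomorphism claim in one stroke by citing the corollary after \cite[\S 1.6, Theorem 2]{shafarevich}, which concerns birational maps between \emph{nonsingular} projective curves; it never verifies that $f(C)$ is smooth, even though smoothness is both part of the statement being proved and a hypothesis of the cited result. Your normalization-plus-genus comparison supplies exactly this step: $\bar f$ extends to a finite birational morphism from the smooth curve $C$, so $C$ is the normalization of $f(C)$ and the geometric genus of $f(C)$ is $1$; since the arithmetic genus of an irreducible plane cubic is also $1$, the local delta invariants all vanish and $f(C)$ is smooth, after which the isomorphism follows as in the paper. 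Your closing remark correctly identifies smoothness as the crux rather than a formality, and the alternative you sketch---blowing up $a_1,a_2,a_3$ and checking that the strict transform of $C$, of class $3H-E_1-E_2-E_3$, meets each contracted $(-1)$-curve, of class $H-E_i-E_j$, with intersection number $1$ and hence transversally in a single point---is the geometric version of the same verification. Either route is what legitimizes the Shafarevich-type conclusion that the paper leans on, so your write-up can be viewed as the paper's proof with its implicit smoothness assumption made rigorous.
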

\begin{proof}
By Lemma~\ref{lem:base points lie on cubic}, $f(C)$ is a cubic curve. Moreover, since $f^{-1}(f(C))=C$ is a cubic curve, it also follows that $b_1,b_2,b_3\in f(C)$. The fact that $\bar{f}$ is an isomorphism follows from the corollary after 
\cite[\S 1.6, Theorem 2]{shafarevich} which says that a birational map between nonsingular projective plane curves is regular at every point, and is a one-to-one correspondence.
\end{proof}

Given a smooth cubic curve $C$, any automorphism $g:C\to C$ is of the form $u\mapsto au+b$, where $a=\pm 1$, $b\in C$, where addition is defined via the group law on $C$. Theorem 1.3 in \cite{diller} states that given a smooth cubic curve $C$ and an automorphism $g:C\to C$ defined by some multiplier $a=\pm 1$ and translation $b\in C$, then $g$ is induced by a Cremona transformation with base points $a_1,a_2,a_3$ if and only if $a(a_1+a_2+a_3)=3b$, where again, addition is with respect to the group law on $C$. In particular, every automorphism of $C$ is induced by a two-parameter family of Cremona transformations, which we obtain by picking the first two base points arbitrarily and then letting the third base point be determined by the equation $a_3=a(3b-a_1-a_2)$.

We can use this to prove a converse to Lemma~\ref{lem:Cremona transformations induce isomorphisms on cubics}.
\begin{lem}\label{lem:isomorphisms are cremona}
Let $f:C\to C'$ be an isomorphism of smooth cubic plane curves. Then there is a two-parameter family of Cremona transformations $f'_\sigma:\PP^2\dashrightarrow\PP^2$ such that $f'_\sigma|_C=f$. The base points of these Cremona transformations will lie on the cubic curves.
\end{lem}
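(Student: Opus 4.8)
The plan is to reduce the general isomorphism $f:C\to C'$ to the case of an \emph{automorphism} of a single cubic, which the discussion following Lemma~\ref{lem:Cremona transformations induce isomorphisms on cubics} already handles, and then to transport the resulting Cremona transformations onto $C'$ by a projective change of coordinates. The key step I would isolate first is the existence of a projective transformation $P:\PP^2\dashrightarrow\PP^2$ with $P(C)=C'$.

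I expect this projective-equivalence step to be the main obstacle, since $f$ itself is typically \emph{not} the restriction of any linear map. The point is that $C$ and $C'$ are abstractly isomorphic smooth plane cubics: each is an elliptic curve embedded by the complete linear system of a degree-$3$ line bundle, any two such bundles differ by a translation, and translations are automorphisms of the curve. Consequently $C$ and $C'$ have the same $j$-invariant and are projectively equivalent, so such a $P$ exists. (Equivalently, pulling back $O_{C'}(1)$ along $f$ gives a degree-$3$ bundle on $C$ differing from $O_C(1)$ by a translation, and the induced embeddings differ only by a projective transformation.) Having fixed such a $P$, the restriction $P|_C:C\to C'$ is an isomorphism, and therefore
\[
g := (P|_C)^{-1}\circ f \,:\, C\to C
\]
is an \emph{automorphism} of the single cubic $C$.

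Next I would apply the discussion following Lemma~\ref{lem:Cremona transformations induce isomorphisms on cubics}: the automorphism $g$ is induced by a two-parameter family of Cremona transformations $c_\sigma:\PP^2\dashrightarrow\PP^2$, obtained by choosing two base points on $C$ freely and letting Diller's relation determine the third, so that each $c_\sigma$ has base points on $C$ and satisfies $c_\sigma|_C = g$. I then set $f'_\sigma := P\circ c_\sigma$. Since $P$ is linear and $c_\sigma$ is given by quadratic forms, every component of $f'_\sigma$ is a linear combination of those quadratics and hence again quadratic, and $f'_\sigma$ is birational as a composite of birational maps; thus $f'_\sigma$ is a quadratic Cremona transformation. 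On $C$ we compute $f'_\sigma|_C = P|_C\circ c_\sigma|_C = P|_C\circ g = f$, so each $f'_\sigma$ extends $f$, and letting $\sigma$ range gives the asserted two-parameter family.

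It remains to verify the base-point claim. The base points of $f'_\sigma$ in the domain coincide with those of $c_\sigma$, because composing on the left with the everywhere-defined isomorphism $P$ does not change where the map fails to be defined; these lie on $C$ by construction. For the codomain, write $(f'_\sigma)^{-1} = c_\sigma^{-1}\circ P^{-1}$; as $P^{-1}$ is defined everywhere, the base points of $(f'_\sigma)^{-1}$ are the $P$-images of the base points of $c_\sigma^{-1}$. By Lemma~\ref{lem:base points lie on cubic} the latter lie on $c_\sigma(C)=C$ (since $c_\sigma|_C=g$ is an automorphism of $C$), so their images lie on $P(C)=C'$. Hence all base points of the family lie on the two cubic curves, as required.
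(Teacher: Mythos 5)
Your proposal is correct and follows essentially the same route as the paper's proof: reduce to an automorphism of a single cubic via a projective transformation taking $C$ to $C'$, invoke Diller's theorem (the discussion after Lemma~\ref{lem:Cremona transformations induce isomorphisms on cubics}) to get the two-parameter family, and compose back with the projective transformation. The only differences are cosmetic: the paper justifies the projective equivalence of $C$ and $C'$ via a common Weierstrass form rather than your line-bundle/translation argument, and it cites Lemma~\ref{lem:base points lie on cubic} for the base-point claim where you verify the domain and codomain cases explicitly.
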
 
\begin{proof}
Since $C$ and $C'$ are isomorphic, they have the same 
Weierstra{\ss}  form $C_0$. There are therefore homographies $H_1,H_2 \in \textup{PGL}(3)$ such that $H_1(C)=C_0=H_2(C')$ and therefore $H_1^{-1}H_2(C')=C$. Then $H_1^{-1}H_2\circ f:C\to C$ is an automorphism of $C$ and it follows by \cite[Theorem 1.3]{diller} that this is induced by some two-parameter family of Cremona transformations $g_{\sigma}$; 
the members of this family are obtained by picking the first two base points arbitrarily on $C$ and then letting the third base point be determined by the equation $a_3=a(3b-a_1-a_2)$. Then $f'_{\sigma}:=H_2^{-1}H_1\circ g_{\sigma}$ is the desired family of Cremona transformations. By Lemma~\ref{lem:base points lie on cubic} the base points of each of these Cremona transformations lie on the cubic curves.
\end{proof}

\subsection{Fundamental Matrices and Projective Reconstruction}\label{sec:vision stuff}
In this paper we will be concerned with pairs of linear projections $\pi_1,\pi_2\,:\, \PP^3 \dashrightarrow \PP^2$ with non-coincident 
centers $c_1, c_2$. In the context of computer vision, these arise as {\em projective cameras} which are linear projections from $\PP^3(\RR) \dashrightarrow \PP^2(\RR)$, represented by  (unique) matrices $A_1, A_2 \in \PP(\RR^{3 \times 4})$ of 
rank three, such that $\pi_i(p) \sim A_i p$ for all {\em world points} $p \in \PP^3(\RR)$. The notation $\sim$ indicates equality in projective space. The centers $c_i$ are the unique points in $\PP^3(\RR)$ such that 
$A_ic_i = 0$ for $i=1,2$. The projections we consider in this paper are slightly more general in that they work over $\CC$; they 
are represented by rank three matrices $A_i \in \PP(\CC^{3 \times 4})$ and send $p \in \PP^3$ to 
$A_ip \in \PP^2$.

In the vision setting, the image formation equations $A_ip = \lambda_i \pi_i(p)$ for $i=1,2$ and some $\lambda_i \in \RR$, imply that for all  $p \in \PP^3(\RR)$, 
\begin{align}\label{eq:fundamental matrix}
    0 = \det \begin{bmatrix} A_1 & \pi_1(p) & 0 \\ A_2 & 0 & \pi_2(p) \end{bmatrix} = \pi_2(p)^\top F \pi_1(p)
\end{align}
for a unique matrix $F \in \PP(\RR^{3 \times 3})$ of rank two, determined by $(A_1, A_2)$ \cite[Chapter 9.2]{hartley-zisserman-2004}. This matrix $F$ is called the {\em fundamental matrix} of the cameras/projections $(A_1,A_2)$ / $(\pi_1,\pi_2)$. It defines the bilinear form $B_F(x,y) = y^\top F x$
such that 
$B_F(\pi_1(p), \pi_2(p))= \pi_2(p)^\top F \pi_1(p) = 0$ for all $p \in \PP^3(\RR)$
. The entries of $F$ are certain $4 \times 4$ minors of the $6 \times 4$ matrix obtained by stacking $A_1$ on top of $A_2$.
The points $e^x :=\pi_1(c_2)$ and $e^y:=\pi_2(c_1)$ are called the {\em epipoles} of $F$. 
It is well-known \cite[Chapter 9.2]{hartley-zisserman-2004} that $e^x$ and $e^y$ are  the unique points in $\PP^2$ such that $Fe^x=0=(e^y)^\top F$. Conversely, for every rank-two matrix $F\in\PP(\RR^{3\times 3})$ there exists, up to projective transformation, a unique pair of cameras $(A_1,A_2)$ / linear projections $\pi_1,\pi_2  \,:\, \PP^3(\RR) \dashrightarrow \PP^2(\RR)$ with fundamental matrix $F$, see \cite[Theorem 9.10]{hartley-zisserman-2004}. All of these facts extend verbatim over $\CC$ and we call a rank two matrix $F \in \PP(\CC^{3 \times 3})$ a {\em fundamental matrix} of $(\pi_1,\pi_2)$ if it satisfies \eqref{eq:fundamental matrix}.

Equation~\eqref{eq:fundamental matrix} is a constraint on the images of a world point in two cameras. Going the other way, given $k$ point pairs $(x_i,y_i) \in \PP^2(\RR) \times \PP^2(\RR)$, one can ask if they admit a {\em projective reconstruction}, namely a pair of real cameras $A_1, A_2$ and real world points $p_1, \ldots, p_k$ such that $A_1 p_i \sim x_i$ and $A_2 p_i \sim y_i$ for $i=1,\ldots, k$. A necessary condition for a reconstruction is the 
existence of a rank-two matrix $F \in \PP(\RR^{3 \times 3})$ such that $y_i^\top F x_i =0$ for $i=1, \ldots, k$, called a {\em fundamental matrix} of the point pairs 
$(x_i,y_i)_{i=1}^k$. Note that $\textup{vec}(F)$ lies in the nullspace of 
$Z_k = (x_i^\top \otimes y_i^\top)_{i=1}^k$. 
The necessary and sufficient conditions for the existence of a projective reconstruction 
of $(x_i,y_i)_{i=1}^k$ are 1) the existence of a fundamental matrix $F$ and 2) for each $i$, either $Fx_i = 0$ and $y_i^\top F = 0$, or neither $x_i$ nor $y_i$ lie in the right and left nullspaces of $F$ \cite{hon paper}. 
In this paper, we extend the above definition to $\CC$ and call any rank-two matrix $F \in \PP(\CC^{3 \times 3})$ that lies in the 
nullspace of $Z_k$, a {\em fundamental matrix} of the point pairs 
$(x_i,y_i)_{i=1}^k$.


\section{$k=8$}\label{sec:k=8}

In this section we characterize the rank deficiency of $Z = Z_8 = (x_i^\top \otimes y_i^\top)_{i=1}^8$ when the point pairs $(x_i,y_i)$ 
are semi-generic. When $k$ is fixed 
we often write $Z$ instead of $Z_k$.
\begin{thm}\label{thm:k=8 main result}
For eight semi-generic point pairs $(x_i,y_i)_{i=1}^8$, the matrix $Z$ drops rank if and only if there exists a Cremona transformation $f:\PP^2_x\dashrightarrow\PP^2_y$ such that $f(x_i)=y_i$ for all $i$.
\end{thm}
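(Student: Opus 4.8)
The plan is to reduce rank deficiency of $Z$ to a statement about quadrics in $\PP^3$ and then translate that into the existence of a Cremona transformation. Since $Z$ is $8\times 9$, its nullspace is always at least one-dimensional, and $Z$ drops rank precisely when $\dim\ker Z\geq 2$; a vector in $\ker Z$ is $\operatorname{vec}(M)$ for a $3\times 3$ matrix $M$ with $y_i^\top M x_i=0$ for all $i$. For the forward direction I would first produce a projective reconstruction. Assuming $\dim\ker Z\geq 2$, the projective line $\PP(\ker Z)\subseteq\PP^8$ meets the determinantal cubic $\{\det=0\}$, so $\ker Z$ contains a rank-two fundamental matrix $F$ (rank exactly two by semi-genericity); by the results of \Cref{sec:vision stuff} this yields projections $\pi_1,\pi_2:\PP^3\dashrightarrow\PP^2$ with distinct centers $c_1,c_2$, represented by rank-three matrices $A_1,A_2$ with $A_1c_1=A_2c_2=0$, together with world points $p_1,\dots,p_8$ satisfying $\pi_1(p_i)\sim x_i$ and $\pi_2(p_i)\sim y_i$.

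The algebraic bridge is the linear map $\Phi(M)=\tfrac{1}{2}\bigl(A_2^\top M A_1+A_1^\top M^\top A_2\bigr)$ from $3\times 3$ matrices to symmetric $4\times 4$ matrices, i.e.\ to quadrics in $\PP^3$. The identity $y_i^\top M x_i=p_i^\top\Phi(M)p_i$ shows that $M\in\ker Z$ exactly when the quadric $\Phi(M)$ passes through every $p_i$. A direct computation gives $\ker\Phi=\langle F\rangle$ (using that $p^\top A_2^\top F A_1 p=0$ in \eqref{eq:fundamental matrix} forces $A_2^\top F A_1$ to be antisymmetric), while $A_1c_1=0=A_2c_2$ forces $c_1^\top\Phi(M)c_1=c_2^\top\Phi(M)c_2=0$; hence the image of $\Phi$ is contained in, and by dimension count equal to, the eight-dimensional space of quadrics through $c_1$ and $c_2$. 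Consequently $\dim\ker Z=1+\dim\{Q:\,c_1,c_2,p_1,\dots,p_8\in Q\}$, so $Z$ drops rank if and only if there is a nonzero quadric in $\PP^3$ through the two centers and the eight world points.

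It remains to match quadrics through $c_1,c_2$ with Cremona transformations $\PP^2_x\dashrightarrow\PP^2_y$. Given such a $Q$, since $c_1\in Q$ the line $\pi_1^{-1}(x)$ meets $Q$ in $c_1$ and one residual point $p(x)$, and setting $f_Q(x)=\pi_2(p(x))$ gives a map whose coordinates are quadratic in $x$, hence a Cremona transformation, with $c_2\in Q$ making $f_Q^{-1}$ the symmetric construction from $c_2$; if moreover $p_i\in Q$ then $p(x_i)=p_i$ and $f_Q(x_i)=\pi_2(p_i)=y_i$. This establishes the forward implication. For the converse I would realize a given Cremona transformation $g$ with $g(x_i)=y_i$ by the same construction from a quadric $Q$ through $c_1,c_2$ containing every $p_i$, and then take $M=\Phi^{-1}(Q)$; this $M$ lies in $\ker Z$ and is independent of $F$ (since $\Phi(M)=Q\neq 0$ while $\Phi(F)=0$), so $\dim\ker Z\geq 2$ and $Z$ drops rank. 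Together these give the \textbf{trinity correspondence} among lines in $\ker Z$, quadrics through $c_1,c_2,p_1,\dots,p_8$, and Cremona maps sending $x_i\mapsto y_i$.

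The main obstacle is the converse half of the geometric correspondence: showing that every Cremona transformation with $f(x_i)=y_i$ arises from an honest quadric through both centers that contains all eight world points. This is delicate because the reconstruction (hence $c_1,c_2$ and the $p_i$) is only available once a rank-two $F$ is in hand, so the argument must build the cameras and the quadric directly from the base points and normal form of the Cremona map (as in \Cref{lem:basepoints fix monomials to scale}) rather than presupposing a reconstruction. I expect the bulk of the work to be verifying non-degeneracy under the semi-generic hypothesis—ruling out coincident base points, a degenerating quadric, a world point meeting a center or exceptional line, or $F$ dropping to rank one—and semi-genericity is precisely what lets me discard these degenerate cases.
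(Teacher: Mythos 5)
Your forward direction is correct in outline and is, in substance, the paper's own route, just packaged coordinate-freely: your map $\Phi(M)=\tfrac12\bigl(A_2^\top MA_1+A_1^\top M^\top A_2\bigr)$, with $\ker\Phi=\langle F\rangle$ and image equal to the quadrics through $c_1,c_2$, is exactly the correspondence of Lemma~\ref{lem:lines through F gives quadric} (proved there in adapted coordinates); the resulting identity $\dim\ker Z=1+\dim\{Q:\,c_1,c_2,p_1,\dots,p_8\in Q\}$ is the leg $\mathcal{L}_P\to\mathcal{Q}_P$ of Theorem~\ref{thm:trinity correspondence (vision)}; and your residual-intersection map $f_Q=\pi_2\circ(\pi_1|_Q)^{-1}$ is Lemma~\ref{lem:QFtoCremona}. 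The non-degeneracy you defer is genuinely needed ($f_Q$ is a non-degenerate Cremona transformation only when $Q$ is smooth and does not contain $\overline{c_1c_2}$, Lemma~\ref{res:generic iff smooth}), and the paper's mechanism for it is worth knowing: $P$-genericity of the line $\mathcal{N}_Z$ is obtained by applying full genericity to the two overlapping seven-point subsets $(x_i,y_i)_{i=1}^7$ and $(x_i,y_i)_{i=2}^8$, which together rule out $Fx_i=0$ or $y_i^\top F=0$ for every $i=1,\dots,8$ (cf.\ Lemma~\ref{lem:P-degen}).

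The converse is where there is a genuine gap, and it is the one you yourself flag: every object in your argument ($c_1,c_2$, the $p_i$, $F$, hence $\Phi$ and $\Phi^{-1}(Q)$) presupposes a projective reconstruction, and a reconstruction presupposes a rank-two matrix in $\ker Z$. Before rank drop is established, however, $\ker Z$ is only known to contain a single projective point, and nothing guarantees that its spanning matrix is singular, let alone of rank two and compatible with all eight pairs; so the route through quadrics is circular as written, and the repair you sketch (building cameras and a quadric from the normal form of $g$) is left undone. It is also far more work than needed, because this direction requires no reconstruction at all. The paper argues as follows: after homographies one may assume $g$ is the standard involution $(x_1,x_2,x_3)\mapsto(x_2x_3:x_1x_3:x_1x_2)$, and then $y_i=g(x_i)$ forces the first, fifth, and ninth columns of $Z$ to coincide, giving $\rank(Z)\leq 7$ immediately. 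Equivalently, by Lemma~\ref{lem:lineToCremona}, the matrices $M$ satisfying $g(x)^\top Mx\equiv0$ for all $x$ form the orthogonal complement of the seven-dimensional span of $\{g(x)x^\top:x\in\CC^3\}$, i.e.\ a projective line, and this line lies in $\mathcal{N}_Z$ precisely because $g(x_i)=y_i$; either observation yields $\dim\ker Z\geq2$ with no quadric, camera, or degeneracy analysis whatsoever.
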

\begin{proof}($\Leftarrow$)
Suppose we have  a Cremona transformation $f:\PP^2_x\dashrightarrow\PP^2_y$ such that $f(x_i)=y_i$ for $i=1,\ldots,8$. After homographies we can assume that $f$ is the basic quadratic involution $(x_1,x_2,x_3)\mapstochar\dashrightarrow(x_2x_3,x_1x_3,x_1x_2)$. Then 
\begin{equation}
Z=\begin{bmatrix}x_{11}x_{12}x_{13} & x_{11}^2x_{13} & x_{11}^2x_{12} & x_{12}^2x_{13} & x_{11}x_{12}x_{13} & x_{11}x_{12}^2 & x_{12}x_{13}^2 & x_{11}x_{13}^2 & x_{11}x_{12}x_{13}\\
x_{21}x_{22}x_{23} & x_{21}^2x_{23} & x_{21}^2x_{22} & x_{22}^2x_{23} & x_{21}x_{22}x_{23} & x_{21}x_{22}^2 & x_{22}x_{23}^2 & x_{21}x_{23}^2 & x_{21}x_{22}x_{23}\\
x_{31}x_{32}x_{33} & x_{31}^2x_{33} & x_{31}^2x_{32} & x_{32}^2x_{33} & x_{31}x_{32}x_{33} & x_{31}x_{32}^2 & x_{32}x_{33}^2 & x_{31}x_{33}^2 & x_{31}x_{32}x_{33}\\
x_{41}x_{42}x_{43} & x_{41}^2x_{43} & x_{41}^2x_{42} & x_{42}^2x_{43} & x_{41}x_{42}x_{43} & x_{41}x_{42}^2 & x_{42}x_{43}^2 & x_{41}x_{43}^2 & x_{41}x_{42}x_{43}\\
x_{51}x_{52}x_{53} & x_{51}^2x_{53} & x_{51}^2x_{52} & x_{52}^2x_{53} & x_{51}x_{52}x_{53} & x_{51}x_{52}^2 & x_{52}x_{53}^2 & x_{51}x_{53}^2 & x_{51}x_{52}x_{53}\\
x_{61}x_{62}x_{63} & x_{61}^2x_{63} & x_{61}^2x_{62} & x_{62}^2x_{63} & x_{61}x_{62}x_{63} & x_{61}x_{62}^2 & x_{62}x_{63}^2 & x_{61}x_{63}^2 & x_{61}x_{62}x_{63}\\
x_{71}x_{72}x_{73} & x_{71}^2x_{73} & x_{71}^2x_{72} & x_{72}^2x_{73} & x_{71}x_{72}x_{73} & x_{71}x_{72}^2 & x_{72}x_{73}^2 & x_{71}x_{73}^2 & x_{71}x_{72}x_{73}\\
x_{81}x_{82}x_{83} & x_{81}^2x_{83} & x_{81}^2x_{82} & x_{82}^2x_{83} & x_{81}x_{82}x_{83} & x_{81}x_{82}^2 & x_{82}x_{83}^2 & x_{81}x_{83}^2 & x_{81}x_{82}x_{83}\\\end{bmatrix}
\end{equation}
which one can see is rank deficient because its first, fifth and ninth columns are the same. \end{proof}  

In order to prove the `{\em only-if}'' direction of Theorem~\ref{thm:k=8 main result}, we develop a number of tools in \S~\ref{subsec:trinity}. The proof of Theorem~\ref{thm:k=8 main result} will then be completed in \S~\ref{subsec:reverse direction proof}.

\subsection{The Trinity of Lines, Quadrics and Cremona Transformations}
\label{subsec:trinity}
In order to establish the \emph{trinity correspondence}, we need to introduce some genericity conditions for our main objects of interest. 
We say that a line $\ell\subset\PP(\CC^{3\times 3})$ is \textit{generic} if it contains exactly three rank-two matrices. 
These lines are generic in the usual sense, since almost all lines in $\PP(\CC^{3\times 3})$ intersect the degree-three determinantal variety $\mathcal{D} := \{ X \in \PP(\CC^{3 \times 3}) \,:\, \det(X) = 0 \}$ in three distinct points. Furthermore, given a pair of linear projections $\pi_1,\pi_2 \,:\, \PP^3 \dashrightarrow \PP^2$ with distinct centers $c_1,c_2$ we say that a smooth quadric $Q$ through $c_1,c_2$ is \textit{permissible} if it does not contain the line $\overline{c_1c_2}$ connecting the two centers. 
\newline

\begin{thm} [Trinity correspondence] \label{thm:trinity correspondence}
Consider the following three sets:
\begin{enumerate}
\item $\mathcal{L}$: the set of all generic lines $\ell$ in $\PP(\CC^{3\times 3})$, 
\item $\mathcal{Q}$: the set (up to projective equivalence) of pairs of linear projections $\pi_1,\pi_2:\PP^3 \dashrightarrow \PP^2$ with non-coincident centers $c_1,c_2$, along with a permissible quadric $Q\subset\PP^3$ through $c_1,c_2$,
\item $\mathcal{C}$: the set of (non-degenerate) Cremona transformations from 
$\PP^2 \dashrightarrow \PP^2$. 
\end{enumerate}
Then there is a $1:1$ correspondence between $\mathcal{L}$ and $\mathcal{C}$, a $1:3$ correspondence between $\mathcal{L}$ and $\mathcal{Q}$, and a $3:1$ correspondence between $\mathcal{Q}$ and 
$\mathcal{C}$, such that  diagram \eqref{eq:trinity} commutes.
\begin{equation}\label{eq:trinity}
\begin{tikzcd}[column sep=small]
&\mathcal{Q}\arrow[dr, "3\colon 1"]\\
\mathcal{L}\arrow[ur, "1\colon 3"]\arrow[rr, leftrightarrow]&&\mathcal{C}
\end{tikzcd}
\end{equation}
\end{thm}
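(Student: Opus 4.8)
The plan is to construct the two ``legs'' of the diagram explicitly and then obtain the remaining edge together with commutativity by composition. A key simplification I would use throughout is that all three constructions are equivariant under projective transformations (of $\PP^2_x,\PP^2_y$ and of $\PP^3$), so by the lemma that every Cremona transformation factors as $H_1\circ f\circ H_2$ it suffices to verify the delicate identities for the standard involution and a standard reconstruction, then transport them by homographies.

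For the bijection $\mathcal{L}\leftrightarrow\mathcal{C}$, given a generic line $\ell=\operatorname{span}(M_0,M_1)$ I would define $f_\ell(x):=(M_0x)\times(M_1x)$, the cross product. This is a well-defined quadratic map $\PP^2\dashrightarrow\PP^2$: changing the basis $(M_0,M_1)$ by $g\in\GL_2$ rescales $f_\ell$ by $\det g$, and each coordinate is a quadric. Its base points are exactly the $x$ with $M_0x\parallel M_1x$, i.e. the $x\in\ker(sM_0+tM_1)$ for some $(s:t)$; since $\ell$ is generic these are the kernels of the three rank-two matrices of $\ell$, so they are three distinct points and $f_\ell$ is a non-degenerate Cremona transformation. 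For the inverse I would send a Cremona $f$ to the space $\ell_f:=\{M: f(x)^\top M x\equiv 0\}$ of bilinear forms vanishing on its graph; a direct count (verified on the standard involution, where the nine cubics $f_i(x)x_j$ span only a $7$-dimensional space) gives $\dim\ell_f=2$, and since $y=f_\ell(x)$ is orthogonal to both $M_0x$ and $M_1x$ we get $\ell_{f_\ell}=\ell$, so the two maps are mutually inverse.

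For the $1:3$ relation $\mathcal{L}\leftrightarrow\mathcal{Q}$ and the leg $\mathcal{Q}\to\mathcal{C}$ I would argue through fundamental matrices. Each of the three rank-two matrices $F_j\in\ell$ is a fundamental matrix, hence determines cameras $(A_1^j,A_2^j)$ up to projective equivalence; writing $\ell=\operatorname{span}(F_j,M)$ I attach the quadric $Q_j:=\operatorname{sym}\!\big((A_2^j)^\top M A_1^j\big)\subset\PP^3$, which is well defined up to scale since $\operatorname{sym}\!\big((A_2^j)^\top F_j A_1^j\big)=0$. Because $A_1^jc_1=A_2^jc_2=0$ this $Q_j$ passes through both centers, and the linear map $M\mapsto\operatorname{sym}(A_2^\top M A_1)$ has kernel exactly $\langle F_j\rangle$ (the fundamental matrix is the unique bilinear form vanishing on all corresponding pairs) and image the $8$-dimensional space of quadrics through the centers, so it identifies lines through $F_j$ with such quadrics. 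This produces three triples $(\pi_1^j,\pi_2^j,Q_j)$ from $\ell$, giving the $1:3$ correspondence. Independently I define $\mathcal{Q}\to\mathcal{C}$ geometrically: back-project $x$ to the line $L_x$ through $c_1$, take its residual intersection $p_x$ with $Q$ (unique since $c_1\in Q$), and set $f(x):=\pi_2(p_x)$; expanding $Q(c_1+tw_x)=0$ shows $f$ is quadratic, with base points the epipole $e^x=\pi_1(c_2)$ together with the $\pi_1$-images of the two lines of the smooth quadric $Q$ through $c_1$.

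Finally I would prove commutativity, i.e. that the residual-intersection map attached to $(\pi_1^j,\pi_2^j,Q_j)$ equals $f_\ell=(F_jx)\times(Mx)$. By equivariance it is enough to check this after normalizing $F_j$ and the cameras to a standard form, where it reduces to matching two explicit quadratic maps; the base-point bookkeeping is a useful consistency check, since $e^x=\ker F_j$ is automatically one base point and the two rulings of $Q_j$ through $c_1$ should project to the kernels of the other two rank-two matrices $F_{j'},F_{j''}$. Once commutativity holds the counts are forced: $\mathcal{L}\leftrightarrow\mathcal{C}$ is $1:1$ and $\mathcal{L}\to\mathcal{Q}$ is $1:3$, so $\mathcal{Q}\to\mathcal{C}$ is $3:1$. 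The main obstacle I anticipate is not any single construction but the precise matching of the three genericity conditions---$\ell$ having three distinct rank-two points, $f$ being non-degenerate with three distinct base points, and $Q$ being a smooth quadric not containing the baseline $\overline{c_1c_2}$---so that each map restricts to a genuine bijection between the stated sets rather than a mere rational correspondence; verifying that $Q_j$ is smooth and baseline-avoiding exactly when $\ell$ is generic is where the real work lies.
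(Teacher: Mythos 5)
Your proposal is correct and follows essentially the same route as the paper's: your cross-product formula $(M_0x)\times(M_1x)$ is Remark~\ref{rem:lineToCremonaEq}, your map $M\mapsto\operatorname{sym}\big(A_2^\top MA_1\big)$ identifying lines through $F$ with quadrics through the centers is Lemma~\ref{lem:lines through F gives quadric}, your residual-intersection construction is the map $\pi_2\circ(\pi_1|_Q)^{-1}$ of Lemma~\ref{lem:QFtoCremona}, and the equivalence of genericity of $\ell$ with permissibility of $Q$, which you rightly flag as the delicate point, is exactly what the paper invokes as known (Lemma~\ref{res:generic iff smooth}). The only execution differences are that the paper proves agreement of the line-induced and quadric-induced Cremona transformations by a dimension-and-irreducibility (Zariski-closure) argument in Lemma~\ref{lem:lineToCremona} rather than your normalize-and-compute plan, and it justifies distinctness of the three base points by the short linearity argument of Lemma~\ref{lem:lines have well-defined cremona transformations} (coinciding kernels would force the whole line into the determinantal variety), a step you assert without proof.
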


A similar theorem holds for lines which pass through exactly two rank-two matrices; however, we do not prove it here.

We first show that for fixed linear projections $\pi_1,\pi_2$ with centers $c_1 \neq c_2 \in \PP^3$, there is a bijection between the quadrics that contain $c_1,c_2$ and lines in $\PP(\CC^{3 \times 3})$ through the fundamental matrix $F$ of $(\pi_1,\pi_2)$. This result is well-known in the context of computer vision (\cite{bratelund},\cite{hartley-kahl}), but we write an independent proof below.

\begin{lem}\label{lem:lines through F gives quadric}
Fix a pair of linear projections $\pi_1, \pi_2 \,:\, \PP^3 \dashrightarrow \PP^2$ with non-coincident centers $c_1, c_2$, and let $F$ be its fundamental matrix.
There is a $1:1$ correspondence between the quadrics $Q \subset \PP^3$ through $c_1,c_2$
and lines $\ell \subset \PP(\CC^{3 \times 3})$ 
through $F$. 
\end{lem}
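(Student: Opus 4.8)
The plan is to set up an explicit linear-algebraic correspondence between the space of quadrics through $c_1, c_2$ and the space of lines through $F$, and then argue that it is a bijection by matching dimensions and checking injectivity. The key observation I would exploit is that the fundamental matrix $F$ arises, via the determinant in \eqref{eq:fundamental matrix}, from the pair of projection matrices $A_1, A_2$, and that any quadric $Q$ in $\PP^3$ is given by a symmetric $4\times 4$ matrix $S$ with $p^\top S p = 0$. The condition that $Q$ pass through the centers $c_1, c_2$ (where $A_1 c_1 = 0$ and $A_2 c_2 = 0$) is the pair of linear conditions $c_1^\top S c_1 = 0$ and $c_2^\top S c_2 = 0$ on the $10$-dimensional space of symmetric $4\times 4$ matrices, cutting it down to projective dimension $7$. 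On the other side, a line through a fixed point $F$ in $\PP(\CC^{3\times 3}) = \PP^8$ is determined by a second point $G \neq F$, i.e.\ by a point of the quotient $\PP^8 / \{F\}$, which is a $\PP^7$. So both families are $7$-dimensional projective spaces, and the goal is to write down a natural linear isomorphism between them.

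The natural map, I expect, goes as follows. Given a quadric $S$ through $c_1, c_2$, I would form the $3\times 3$ matrix
\begin{equation}\label{eq:G-from-S}
G_S \;=\; A_2 \, S \, A_1^\top,
\end{equation}
using the two projection matrices. This is bilinear-looking but the point is that $A_2 S A_1^\top$ is a $3\times 3$ matrix, hence a point of $\PP(\CC^{3\times 3})$, and the assignment $S \mapsto G_S$ is linear in $S$. First I would check the degenerate cases: when $S = c_1 c_2^\top + c_2 c_1^\top$ (a rank-two quadric through both centers, namely a pair of planes, which in fact is the cone-like locus associated to the baseline), one should recover a multiple of $F$ itself, so that the "trivial" quadrics map to $F$ and the span of the image is exactly the line's worth of data. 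More precisely, I would verify that the line $\overline{c_1 c_2}$ sits inside $Q$ exactly when $S$ maps to $F$ itself (matching the \emph{permissible} condition in the statement, which excludes quadrics containing the baseline), and that otherwise $S$ determines a genuine second point $G_S \neq F$, hence a line $\overline{F\,G_S}$ through $F$. The correspondence then sends $Q \mapsto \overline{F\,G_S}$.

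The steps in order: (1) parametrize quadrics through $c_1,c_2$ by symmetric $S$ satisfying the two linear center conditions, giving a $\PP^7$; (2) define $S \mapsto A_2 S A_1^\top \in \PP(\CC^{3\times 3})$ and show it is linear; (3) identify the kernel/preimage structure, showing that the image of the whole family is a $\PP^7$'s worth of matrices whose span with $F$ is controlled, and that the quadric containing the baseline $\overline{c_1 c_2}$ maps precisely to $\CC\cdot F$; (4) conclude that modding out by this one direction gives a bijection onto the $\PP^7$ of lines through $F$. The cleanest way to handle step (3) may be to choose coordinates so that $A_1 = [\,I \mid 0\,]$ and $A_2 = [\,[e^y]_\times \mid e^y\,]$ (the standard canonical form in which $F = [e^y]_\times[\,I\mid 0\,]$-type expressions appear), reducing the verification to an explicit computation. \textbf{The main obstacle} I anticipate is step (3): showing that the map $S \mapsto A_2 S A_1^\top$ has exactly a one-dimensional "excess" — i.e.\ that precisely the baseline-containing quadric collapses to $\CC \cdot F$ and nothing else does — so that after projectivizing and passing to lines through $F$ one gets an honest $1\colon 1$ correspondence rather than a generically-finite map. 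Equivalently, I must pin down the rank of the linear map \eqref{eq:G-from-S} and show the only quadric through $c_1,c_2$ mapping into $\CC\cdot F$ is the one containing the whole baseline, which is exactly the quadric excluded by permissibility; this is where the hypothesis $c_1 \neq c_2$ and the rank-three assumption on $A_1, A_2$ must be used carefully.
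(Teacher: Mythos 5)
Your overall frame---quadrics through $c_1,c_2$ form a $\PP^7$, lines through $F$ form a $\PP^7$, connect them by the linear map $S\mapsto A_2SA_1^\top$---can be made to work, and it is essentially dual to the paper's proof (the paper pairs matrices $M$ against the parametrization $\varphi(u)=\pi_2(u)\pi_1(u)^\top$, i.e.\ uses the adjoint map $M\mapsto A_2^\top MA_1$, and identifies quadrics through the centers with $\PP(F^\perp)$). But the claim you single out as the crux of step (3) is false. In the paper's normalization $c_1=(1{:}0{:}0{:}0)$, $c_2=(0{:}1{:}0{:}0)$, $\pi_1(u)=(u_2{:}u_3{:}u_4)$, $\pi_2(u)=(u_1{:}u_3{:}u_4)$, the fundamental matrix $F$ has $F_{23}=1$, $F_{32}=-1$ and all other entries zero, and for symmetric $S$ with $S_{11}=S_{22}=0$ (the two center conditions) one computes
\begin{equation*}
A_2SA_1^\top=\begin{pmatrix} S_{12} & S_{13} & S_{14}\\ S_{23} & S_{33} & S_{34}\\ S_{24} & S_{34} & S_{44}\end{pmatrix}.
\end{equation*}
So the restriction of your map to quadrics through the centers is \emph{injective}, and its image is exactly the hyperplane $F^\perp=\{G: G_{23}=G_{32}\}$, which does not contain $F$ since $\langle F,F\rangle=2\neq 0$. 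The only symmetric matrices your map kills are combinations of $c_1c_1^\top$ and $c_2c_2^\top$, which are not quadrics through the centers. In particular \emph{no} quadric through $c_1,c_2$ maps into $\CC\cdot F$: your test case $S=c_1c_2^\top+c_2c_1^\top$ maps to the rank-one matrix $\pi_2(c_1)\pi_1(c_2)^\top$ (the outer product of the epipoles), not to a multiple of $F$. Two smaller misreadings feed this error: quadrics containing the baseline form a $\PP^6$ (cut out additionally by $S_{12}=0$), not a single quadric; and the lemma you are proving has no permissibility hypothesis at all---it concerns \emph{all} quadrics through $c_1,c_2$, with permissibility entering only in Lemma~\ref{res:generic iff smooth}.

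Moreover, the structure you anticipate would be fatal to your own step (4): if the induced linear map from the $8$-dimensional space of quadrics to $\CC^{3\times 3}/\CC F\cong\CC^{8}$ had a one-dimensional kernel, its projectivization would carry your $\PP^7$ onto a $\PP^6$ with positive-dimensional fibers, so it could never be a $1{:}1$ correspondence with the $\PP^7$ of lines through $F$. The repair is exactly what the displayed computation gives: $S\mapsto A_2SA_1^\top$ is a linear isomorphism onto $F^\perp$, and since $F\notin F^\perp$ every line through $F$ meets $\PP(F^\perp)$ in exactly one point, so $S\mapsto \operatorname{span}\{F,\,A_2SA_1^\top\}$ is the desired bijection. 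One caveat if you complete the argument this way: your bijection is \emph{not} the one the paper records in Corollary~\ref{cor:line-quadric}, since $u^\top Su$ and $\pi_2(u)^\top M\pi_1(u)$ with $M=A_2SA_1^\top$ define different quadrics in general (the mixed terms acquire factors of $2$). That is fine for the lemma as literally stated, but the paper's version---where the quadric of $\ell=\operatorname{span}\{F,M\}$ is cut out by $\pi_2(u)^\top M\pi_1(u)$---is the one whose geometric property (reconstructed world points lie on the quadric) is used in everything downstream, so your correspondence could not be substituted into the later proofs without modification.
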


\begin{proof}
 After projective transformations, we can assume that $c_1 = (1:0:0:0)$,  $c_2=(0:1:0:0)$, 
$\pi_1(u_1:u_2:u_3:u_4) = (u_2:u_3:u_4)$ and $\pi_2(u_1:u_2:u_3:u_4) = (u_1:u_3:u_4)$. 
If $F=(F_{ij})$ is the fundamental matrix of $(\pi_1,\pi_2)$, then  for all $u \in \PP^3$,
\begin{align} \label{eq:epipolar eqn}
0 = \pi_2(u)^\top F \pi_1(u) = \langle F, \pi_2(u) \pi_1(u)^\top \rangle 
= 
\left\langle  \begin{pmatrix} F_{11} & F_{12} & F_{13} \\ F_{21} & F_{22} & F_{23} \\ F_{31} & F_{32} & F_{33}\end{pmatrix}, 
\begin{pmatrix} u_1u_2 & u_1u_3 & u_1u_4 \\ u_2u_3 & u_3^2 & u_3u_4 \\ u_2u_4 & u_3u_4 & u_4^2 \end{pmatrix}
\right\rangle. 
\end{align}
Since the entries in position $(2,3)$ and $(3,2)$ of 
$\pi_2(u) \pi_1(u)^\top$ are the same,  $F$ is a scalar multiple of 
$$\begin{pmatrix} 0 & 0 & 0 \\ 0 & 0 & 1 \\ 0 & -1 & 0\end{pmatrix}$$
and $B_F(x,y) = x_3y_2 - x_2y_3$. In particular, there exists some $p\in \PP^3$  with $\pi_1(p) = x$ and 
 $\pi_2(p) = y$ if and only if $x_3y_2 = x_2y_3$.

Consider the image of $\varphi:\PP^3\dashrightarrow \PP(\CC^{3 \times 3})$  where 
$\varphi(u) = \pi_2(u) \pi_1(u)^\top$. 
By \eqref{eq:epipolar eqn}, 
$\varphi(\PP^3)$ is contained in the hyperplane $F^\perp \subset \PP(\CC^{3 \times 3})$. 
Any matrix in $\PP(\CC^{3 \times 3})$ can be written as $sF + M$ for some scalar $s$ and $M \in F^\perp$. 
Therefore, 
\begin{equation}\langle sF+M, \pi_2(u) \pi_1(u)^\top \rangle =  
  \pi_2(u)^\top M \pi_1(u) \end{equation}
since $\pi_2(u)^\top F \pi_1(u) = 0$, 
and any linear function on the image of $\varphi$ can be identified with its image in $F^\perp$.  On the other hand, 
a line $\ell$ in $\PP(\CC^{ 3 \times 3})$ through $F$
is of the form  
$\{sF + tM:(s:t)\in \PP^1\}$, where $M \in F^\perp$. Therefore, lines through $F$ are 
in bijection with linear functions on $\varphi(\PP^3)$, up to scaling. 

The monomials $\{u_1u_2, u_1u_3,u_1u_4, u_2u_3, u_2u_4,u_3^2, u_3u_4, u_4^2\}$
form a basis for the $7$-dimensional vector space of homogeneous quadratic polynomials that vanish on $c_1, c_2$. 
Therefore any quadratic polynomial in 
$\CC[u_1,u_2,u_3,u_4]_2$ vanishing at $c_1$ and $c_2$ can be written as $\langle M, \pi_2(u) \pi_1(u)^\top \rangle$ for a unique matrix $M\in F^{\perp}$.  This gives a linear isomorphism between linear functions on the image of $\varphi$, up to global scaling, (which have been identified with lines through $F$) and quadrics passing though $c_1$ and $c_2$. 
\end{proof}

\begin{cor} \label{cor:line-quadric}
Let $\pi_1,\pi_2:\PP^3\dashrightarrow \PP^2$ be two linear projections with centers $c_1 \neq c_2$ and fundamental matrix $F$. Let $\ell_F$ be a line in $\PP(\CC^{3 \times 3})$ through $F$. The correspondence $\ell_F\mapsto Q$, where $Q \subset \PP^3$ is a quadric passing through $c_1,c_2$, is as follows. 
Let $M\in\ell_F$ be any $M\neq F$. Then $Q$ is cut out by the bilinear form
\begin{equation}
B_{M}(\pi_1(p),\pi_2(p))=\pi_2(p)^\top M\pi_1(p)=0.
\end{equation}
\end{cor}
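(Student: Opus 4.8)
The plan is to treat this corollary as an explicit unpacking of the bijection already produced in Lemma~\ref{lem:lines through F gives quadric}; the only genuine content is to check that the recipe ``pick any $M \in \ell_F$ with $M \neq F$ and take the vanishing locus of $B_M(\pi_1(p),\pi_2(p))$'' is well defined, that is, that it does not depend on which point $M \neq F$ of the line is chosen, and that it reproduces the quadric assigned to $\ell_F$ in the Lemma.

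First I would record the single identity that drives everything: since $F$ is the fundamental matrix of $(\pi_1,\pi_2)$, we have $\pi_2(p)^\top F \pi_1(p) = 0$ for all $p \in \PP^3$, by \eqref{eq:fundamental matrix}. Now take two matrices $M_1, M_2 \in \ell_F$, both different from $F$. Because $F$ and $M_1$ are distinct points of the line $\ell_F$ they span it, so I can write $M_2 = \alpha F + \beta M_1$; moreover $\beta \neq 0$, since $M_2 \neq F$ means $M_2$ is not a scalar multiple of $F$. Substituting and using the identity above,
\[
\pi_2(p)^\top M_2 \pi_1(p) = \alpha\,\pi_2(p)^\top F \pi_1(p) + \beta\,\pi_2(p)^\top M_1 \pi_1(p) = \beta\,\pi_2(p)^\top M_1 \pi_1(p),
\]
so $B_{M_2}$ and $B_{M_1}$ differ only by the nonzero scalar $\beta$ and therefore cut out the same quadric. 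This is the crux of the argument and shows the recipe is independent of the chosen $M \neq F$.

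It then remains to identify this common quadric with the one attached to $\ell_F$ in Lemma~\ref{lem:lines through F gives quadric}. There the quadric is produced from the representative $M_0 \in F^\perp$ lying on $\ell_F$, as the zero set of $\langle M_0, \pi_2(u)\pi_1(u)^\top\rangle = \pi_2(u)^\top M_0 \pi_1(u)$; and this $M_0$ is not a scalar multiple of $F$ (since $F \notin F^\perp$ in the coordinates of that proof), hence is itself a legitimate choice of $M \neq F$. By the previous paragraph the locus computed from any $M \neq F$ agrees with the one computed from $M_0$, which is exactly the quadric of the Lemma. Finally, I would note that $\pi_2(p)^\top M \pi_1(p) = p^\top A_2^\top M A_1 p$ is a quadratic form on $\PP^3$ vanishing at $c_1,c_2$ because $A_1 c_1 = 0 = A_2 c_2$ (Section~\ref{sec:vision stuff}), confirming that the output is indeed a quadric through both centers.

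There is no real obstacle here, as the corollary is short and elementary. The one point that needs care is well-definedness in the choice of $M$, and that is precisely the step where the vanishing $\pi_2(p)^\top F \pi_1(p) \equiv 0$ is used.
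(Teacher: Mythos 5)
Your proposal is correct and matches the paper's (implicit) reasoning: the paper gives no separate proof of this corollary, treating it as an immediate restatement of the correspondence constructed in Lemma~\ref{lem:lines through F gives quadric}, where the line $\{sF+tM : (s:t)\in\PP^1\}$ with $M\in F^\perp$ is sent to the quadric cut out by $\pi_2(u)^\top M\pi_1(u)$. Your well-definedness check (using $\pi_2(p)^\top F\pi_1(p)\equiv 0$ to show any $M\neq F$ on the line yields the same quadric, and that the $F^\perp$ representative is one such choice) is exactly the content needed to justify the corollary's phrasing.
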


The following result is well-known and can be proven by writing a comprehensive list of the equivalence classes, under projective transformation, of quadrics through a pair of distinct points and then testing an example from each class.

\begin{lem}\label{res:generic iff smooth}
(\cite{bratelund}, \cite{hartley-kahl}, \cite[Result 22.11]{hartley-zisserman-2004}) Under the $1:1$ correspondence in Lemma~\ref{lem:lines through F gives quadric}, the line $\ell$ corresponds to a permissible quadric $Q$ through $c_1,c_2$ if and only if $\ell$ is a generic line.
\end{lem}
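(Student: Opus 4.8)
The plan is to prove Lemma~\ref{res:generic iff smooth} directly from the explicit correspondence in Corollary~\ref{cor:line-quadric}, working in the normalized coordinates established in the proof of Lemma~\ref{lem:lines through F gives quadric}. First I would fix $c_1=(1:0:0:0)$, $c_2=(0:1:0:0)$ and the projections $\pi_1,\pi_2$ as in that proof, so that $F$ is the skew matrix with $B_F(x,y)=x_3y_2-x_2y_3$. A line $\ell$ through $F$ is of the form $\{sF+tM\}$, and by Corollary~\ref{cor:line-quadric} the associated quadric $Q$ is cut out by $q(u):=\langle M,\pi_2(u)\pi_1(u)^\top\rangle$, which using the monomial list $\{u_1u_2,u_1u_3,u_1u_4,u_2u_3,u_2u_4,u_3^2,u_3u_4,u_4^2\}$ is an explicit quadratic form in the entries of $M$. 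The key observation to exploit is that the three rank-two matrices on $\ell$ correspond, via the Segre-type identity, to degeneracies of $Q$: I expect to show that a point $(s:t)\in\PP^1$ gives a rank-two matrix $sF+tM$ exactly when the corresponding quadric $Q_{(s:t)}$ in the pencil fails to be smooth, i.e.\ when its $4\times 4$ Gram matrix is singular.

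The technical heart is to compare two determinants as functions of $(s:t)$. On the matrix side, $\det(sF+tM)$ is a cubic binary form in $(s:t)$, since $\det(sF+tM)=s^3\det(F)+\cdots$ and $\det F=0$ makes the leading term vanish, so generically it is a degree-three form whose three roots are the rank-two matrices on $\ell$ (the line is \emph{generic} precisely when these three roots are distinct). On the quadric side, I would write down the $4\times 4$ symmetric matrix $G_{(s:t)}$ representing the pencil member $sq_F+tq_M$ of quadrics through $c_1,c_2$ (where $q_F\equiv 0$ is the degenerate ``fundamental'' quadric, so the pencil is really parametrized as $t$ times $q_M$ plus the degenerate locus), and compute $\det G_{(s:t)}$. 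The claim I want is that, up to a nonzero scalar and a reparametrization of $\PP^1$, these two binary forms have the same zero locus, so that $\ell$ has three distinct rank-two points if and only if the quadric $Q$ it corresponds to is smooth and does not contain the line $\overline{c_1c_2}$. The condition of \emph{not} containing $\overline{c_1c_2}$ is exactly what rules out the extra degenerations that would merge roots or send one to infinity in the $(s:t)$ parameter.

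Concretely, I would carry out the argument in four steps. First, record the explicit $q(u)$ and its Gram matrix $G(M)$ in terms of the nine entries $M_{ij}$. Second, compute $\det(sF+tM)$ and identify when its three roots are distinct. Third, compute $\det G_{(s:t)}$ for the induced pencil and verify the binary forms agree up to scalar and reparametrization; here the permissibility condition (smoothness of $Q$ together with $\overline{c_1c_2}\not\subset Q$) should be seen to be equivalent to nonvanishing of the appropriate coefficient, ensuring all three roots stay finite and distinct. Fourth, translate ``three distinct rank-two matrices on $\ell$'' into ``$Q$ permissible.'' Alternatively, following the remark preceding the lemma, one can instead enumerate the finitely many projective-equivalence classes of quadrics through two distinct points $c_1,c_2$, and for one representative of each class compute both the rank profile of the corresponding line $\ell$ and whether $Q$ is smooth and avoids $\overline{c_1c_2}$; checking that the generic-line class matches exactly the permissible class finishes the proof. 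I expect the main obstacle to be the bookkeeping in the determinant comparison, specifically showing that the degenerate pencil member associated with $\overline{c_1c_2}\subset Q$ is precisely the case that forces a double root (hence fewer than three rank-two matrices), so that the geometric permissibility hypothesis lines up cleanly with the algebraic genericity of $\ell$.
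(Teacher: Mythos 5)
Your main route breaks at its central claim. In the normalized coordinates, $q_F(u) := \pi_2(u)^\top F\, \pi_1(u) \equiv 0$ identically --- that is precisely the defining property of the fundamental matrix --- so the family $s\,q_F + t\,q_M$ is not a pencil of quadrics at all: for every $(s:t)$ with $t\neq 0$ it cuts out the \emph{same} quadric $Q$, and at $t=0$ it is the zero form. Consequently $G_{(s:t)} = t\,G_M$ (where $G_M$ is the Gram matrix of $q_M$), so $\det G_{(s:t)} = t^4\det G_M$, whose zero locus in $\PP^1$ is a single quadruple point or all of $\PP^1$. This can never agree, even up to scalar and reparametrization, with the zero locus of $\det(sF+tM) = t\,P(s,t)$, where $P(s,t) = M_{11}s^2 + b\,st + \det(M)\,t^2$ with $b = M_{11}(M_{23}-M_{32}) + M_{12}M_{31}-M_{13}M_{21}$ is a genuine binary quadratic; generically $tP$ has three distinct zeros. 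In particular your key assertion --- that $(s:t)$ gives a rank-two matrix exactly when $Q_{(s:t)}$ fails to be smooth --- is false: smoothness of $Q_{(s:t)}$ does not depend on $(s:t)$, while the rank of $sF+tM$ does. Steps two and three of your plan therefore cannot be carried out as stated.

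The comparison that does work is between scalars, not binary forms. A direct computation in these coordinates gives the identity $\det(2G_M) = b^2 - 4M_{11}\det M = \mathrm{disc}(P)$, and $q(s,t,0,0)=M_{11}st$ shows that $\overline{c_1c_2}\subset Q$ iff $M_{11}=0$, i.e.\ iff $F$ (the point $t=0$) is a multiple root of $tP$. Combining these with the observation that every rank-one matrix is a singular point of $\mathcal{D}$, so that any line through it meets $\mathcal{D}$ there with multiplicity at least two, one gets: $\ell$ is generic $\iff$ $tP$ has three distinct roots $\iff$ $M_{11}\neq 0$ and $\mathrm{disc}(P)\neq 0$ $\iff$ $Q$ is smooth and $\overline{c_1c_2}\not\subset Q$. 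That repaired computation would be a legitimate proof, and a genuinely different one from the paper's: the paper (following its references) proves the lemma by enumerating the projective-equivalence classes of quadrics through two distinct points and testing one representative from each class --- exactly the one-sentence fallback you mention at the end of your proposal. As written, however, your primary argument has a genuine gap, and the fallback is only gestured at rather than carried out.
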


Next we prove that permissible quadrics through $c_1,c_2$ give rise to quadratic Cremona transformations from $\PP^2 \dashrightarrow \PP^2$. Recall that all Cremona transformations we consider are assumed to be non-degenerate.

\begin{lem}\label{lem:QFtoCremona}
Fix $\pi_i:\PP^3\dashrightarrow\PP^2$ to be linear projections with non-coincident centers $c_i$ for $i=1,2$. A permissible quadric $Q$ through $c_1,c_2$ defines a Cremona transformation $f:\PP^2\dashrightarrow\PP^2$
so that for any point $p\in Q$, $f(\pi_1(p)) = \pi_2(p)$. 
The base points of $f$ are $\pi_1(c_2)$ and the image under $\pi_1$ 
of the two lines contained in $Q$ passing through $c_1$. Similarly, the base points of $f^{-1}$ are $\pi_2(c_1)$ and the image under $\pi_2$ of the two lines contained in $Q$ passing through $c_2$.
\end{lem}

\begin{proof}
Since $c_1, c_2 \in Q$, the restriction of $\pi_1$ (and $\pi_2$) to $Q$ is generically $1:1$. Therefore, $\pi_1(Q)$ and $\pi_2(Q)$ are each birational to a $\PP^2$. 
The map $f$ will be $\pi_2\circ (\pi_1|_Q)^{-1}$. Let us check that this is a quadratic Cremona transformation. 
 
As before, we can take $\pi_1(u) = (u_2:u_3:u_4)$ and $\pi_2(u) = (u_1:u_3:u_4)$.  Then $c_1 = (1:0:0:0)$ is the kernel of $\pi_1$, and we are given that it lies on $Q$. As we saw already, these assumptions imply that $Q$ is defined by the vanishing of a polynomial of the form 
$q(u)=\alpha u_1u_2+\beta u_1+\gamma u_2+\delta$ where $\alpha\in\CC$ is a scalar, $\beta, \gamma\in\CC[u_3,u_4]$ are of degree $1$, and $\delta\in\CC[u_3,u_4]$ is of degree $2$. We can then write $q$ as 
\begin{equation}
q(u) = a u_1+ b 
\end{equation}
where $a = (\alpha u_2 + \beta)$, $b = (\gamma u_2 + \delta)\in \CC[u_2, u_3, u_4]$
with $\deg(a)=1$, $\deg(b)= 2$. The map $(\pi_1|_Q)^{-1}$ is 
then given by 
\begin{align}
x \mapsto (-b(x): x_1 a(x): x_2 a(x): x_3 a(x)) =: (u_1: u_2: u_3: u_4).
\end{align}
To verify this, first check that $\pi_1(u) = a(x) \cdot x$ where $\cdot$ denotes scalar multiplication.
To see that $u \in Q$
\begin{equation}
\begin{split}
q(u)&=u_1\cdot a(u_2,u_3,u_4)+b(u_2,u_3,u_4)\\ 
&=u_1\cdot a(\pi_1(u))+b(\pi_1(u))\\ 
&=-b(x)\cdot a(a(x)\cdot x)+b(a(x)\cdot x)\\ 
&=-b(x)a(x)a(x)+a(x)^2b(x) = 0
\end{split}
\end{equation}
where the last equality comes from the homogeneity of $a,b$
with $\deg(a)=1$, $\deg(b)= 2$. 

Composing with $\pi_2$ we have  
\begin{align}
\pi_2\circ (\pi_1|_Q)^{-1}(x) = (-b(x): x_2 a(x): x_3 a(x)), 
\end{align}
whose coordinates are indeed quadratic. Since $f = \pi_2\circ (\pi_1|_Q)^{-1}$ is defined by quadratics and generically $1:1$, it is a quadratic Cremona transformation. 

To show that this transformation is non-degenerate, we must demonstrate that it has three unique base points. To understand the base points of $f$, recall that on a smooth quadric surface there are two distinct (possibly complex) lines passing through each point. The images of the two lines passing through $c_1$ under the projection $\pi_1$ will each be a single point. Therefore $f$ is not well-defined on these image points in $\PP^2$. Similarly, $f$ is undefined on $\pi_1(c_2)$ since $\pi_2(\pi_1^{-1}(\pi_1(c_2))) = \pi_2(c_2) = 0$. Therefore these three points are exactly the base points of $f$ in the domain. Finally, because $\overline{c_1c_2}\not\subset Q$, these base points are all distinct.
The base points in the codomain can be found symmetrically. 
\end{proof}

Thus far we have shown that if we fix linear projections $\pi_1, \pi_2 \,:\, \PP^3 \dashrightarrow \PP^2$ with centers $c_1 \neq c_2 \in \PP^3$, then there is a bijection between permissible quadrics through $c_1, c_2$ and generic lines through the fundamental matrix $F$ of $(\pi_1,\pi_2)$. Furthermore, there is a map sending each generic line through $F$ (permissible quadric through $c_1,c_2$) to the  Cremona transformation from $\PP^2 \dasharrow \PP^2$ given by $\pi_2\circ(\pi_1|_Q)^{-1}$. These correspondences are summarized in \eqref{eq:F trinity}, where $\mathcal{L}_F$ is the set of all generic lines through $F$ and $\mathcal{Q}_F$ is the set of all permissible quadrics through $c_1,c_2$.

\begin{equation} \label{eq:F trinity}
\begin{tikzcd}[column sep=small]
&\mathcal{Q}_F\arrow[dr]\\
\mathcal{L}_F\arrow[ur,leftrightarrow]\arrow[rr, dashrightarrow]&&\mathcal{C}
\end{tikzcd}
\end{equation}

We can make the correspondence between generic lines through $F$ and Cremona transformations even more explicit. 

\begin{lem}\label{lem:lineToCremona}
Given a generic line $\ell \subset\PP(\CC^{3\times 3})$, the set of points $(x,y)\in \PP^2\times \PP^2$ satisfying $y^TMx=0$ for all $M\in \ell$ coincides with the closure of the graph $\{(x,f(x)):x\in \PP^2\backslash B(f)\}$ of a unique Cremona transformation $f:\PP^2\dashrightarrow \PP^2$.
This gives a $1:1$ correspondence between generic lines $\ell\subset\PP(\CC^{3\times 3})$ and Cremona transformations $f:\PP^2 \dashrightarrow \PP^2$. 
Moreover, when $F\in \ell$ has rank two, this Cremona transformation agrees with that induced by the maps
$\mathcal{L}_F \to \mathcal{Q}_F \to \mathcal{C}$.
\end{lem}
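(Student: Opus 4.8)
The plan is to construct the Cremona transformation attached to $\ell$ explicitly via a cross product, identify it with the transformation produced by the trinity maps (which simultaneously proves it is a genuine Cremona map and the ``moreover'' clause), and then recover $\ell$ from it to obtain bijectivity. Throughout I fix a basis $M_0,M_1$ of the $2$-dimensional subspace of $\CC^{3\times 3}$ representing $\ell$, chosen so that both $M_0$ and $M_1$ have rank three; this is possible because the rank-two matrices form only three isolated points of $\ell$. The condition ``$y^\top M x = 0$ for all $M\in\ell$'' is then equivalent to the two bilinear equations $y^\top M_0 x = 0$ and $y^\top M_1 x = 0$, i.e.\ $y\perp M_0 x$ and $y\perp M_1 x$.

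First I would define $f(x):=(M_0 x)\times(M_1 x)$, a map $\PP^2\dashrightarrow\PP^2$ whose coordinates are quadratic in $x$, and check it depends only on $\ell$: a change of basis of $\ell$ rescales the output by the determinant of the change-of-basis matrix. On the open locus where $M_0 x$ and $M_1 x$ are independent, $f(x)$ is the unique point orthogonal to both, so there $V(\ell):=\{(x,y):y^\top M x=0\ \forall M\in\ell\}$ is exactly $\mathrm{graph}(f)$, giving $\overline{\mathrm{graph}(f)}\subseteq V(\ell)$. To promote this to equality I would study $V(\ell)\to\PP^2_x$: its generic fiber is a single point, and the fiber jumps only when $M_0 x, M_1 x$ become dependent, i.e.\ when $x$ lies in the kernel of one of the (exactly three, by genericity) rank-two matrices of $\ell$. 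Since $M_0,M_1$ have rank three they share no kernel vector, so each such special fiber is a $\PP^1$ rather than all of $\PP^2$. As $V(\ell)$ is cut out by two $(1,1)$-forms in the fourfold $\PP^2\times\PP^2$, every component has dimension $\geq 2$, so these $1$-dimensional vertical fibers cannot be separate components; hence $V(\ell)=\overline{\mathrm{graph}(f)}$ and $f$ is determined by $\ell$ uniquely as a rational map, with the three kernels as its distinct base points.

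Next I would prove that $f$ is a non-degenerate Cremona transformation and establish the ``moreover'' clause at once by routing through the trinity lemmas. Picking a rank-two $F\in\ell$, by \S\ref{sec:vision stuff} it is the fundamental matrix of projections $\pi_1,\pi_2$ with centers $c_1\neq c_2$; since $\ell$ is a generic line through $F$, Lemmas~\ref{lem:lines through F gives quadric} and~\ref{res:generic iff smooth} attach a permissible quadric $Q$ through $c_1,c_2$, and Lemma~\ref{lem:QFtoCremona} yields the Cremona map $f_F=\pi_2\circ(\pi_1|_Q)^{-1}$. For $p\in Q$ with $x=\pi_1(p)$, $y=\pi_2(p)$, Corollary~\ref{cor:line-quadric} gives $y^\top M x=0$ for the generator $M\neq F$ of $\ell$, while $y^\top F x=0$ holds automatically, so $(x,f_F(x))=(x,y)\in V(\ell)=\overline{\mathrm{graph}(f)}$. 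Thus $f_F$ and $f$ agree on a dense set and coincide; in particular $f=f_F$ is non-degenerate and is precisely the map induced by $\mathcal{L}_F\to\mathcal{Q}_F\to\mathcal{C}$. Because this argument applies to each of the three rank-two matrices of $\ell$, all three routes yield the same $f$.

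Finally, for the $1{:}1$ correspondence I would recover $\ell$ from $f$. A matrix $M$ gives a $(1,1)$-form vanishing on $\overline{\mathrm{graph}(f)}$ iff $f(x)^\top M x\equiv 0$, which by $(a\times b)\cdot c=\det[a\,b\,c]$ is the identity $\det[M_0 x\mid M_1 x\mid M x]\equiv 0$; this visibly holds for $M\in\ell$, and I claim the full solution space is exactly $\ell$. The clean way to see this is to reduce to the standard Cremona $f(x)=(x_2x_3,x_1x_3,x_1x_2)$, where a direct expansion of $f(x)^\top M x$ shows that vanishing forces $M$ to be diagonal and trace-zero, a $2$-dimensional space equal to $\ell$; one then transports the computation along $\PGL\times\PGL$ using the first lemma of this section (every non-degenerate Cremona is $H_1\circ f_{\mathrm{std}}\circ H_2$), noting that the group acts linearly and invertibly on $(1,1)$-forms so that $\dim I(\overline{\mathrm{graph}(f)})_{(1,1)}$ is preserved. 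This shows $\ell=I(\overline{\mathrm{graph}(f)})_{(1,1)}$ is determined by $f$ (injectivity) and that the recovered line is generic (the standard model has exactly three rank-two matrices, again by equivariance), while surjectivity follows since any non-degenerate $f$ is the transformation attached to its recovered $\ell$. I expect this last step---controlling the entire space of $(1,1)$-forms vanishing on the graph---to be the main obstacle, which is why I would not attack it directly but reduce to the standard Cremona, where the linear algebra is transparent.
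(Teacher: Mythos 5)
Your proposal is correct and takes essentially the same route as the paper: the forward direction certifies that the map attached to $\ell$ is a genuine non-degenerate Cremona transformation through the same chain of trinity lemmas (Lemma~\ref{lem:lines through F gives quadric}, Lemma~\ref{res:generic iff smooth}, Corollary~\ref{cor:line-quadric}, Lemma~\ref{lem:QFtoCremona}), and your recovery of $\ell$ as the space of $(1,1)$-forms vanishing on the graph---computed for the standard involution to be the trace-zero diagonal matrices and transported by projective equivalence---is precisely the paper's computation of the $7$-dimensional span of $\{f(x)x^\top\}$ and its orthogonal complement, together with its verification that the resulting line contains exactly three rank-two matrices. The only differences are organizational: you introduce $f$ first by the explicit cross-product formula (which the paper records separately in Remark~\ref{rem:lineToCremonaEq}) and identify it with the trinity-induced map afterwards, and your fiber analysis of $\{(x,y) : y^\top Mx = 0 \ \ \forall \, M\in\ell\}$ spells out the irreducibility of that variety, a point the paper's terser two-dimensionality count leaves implicit.
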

\begin{proof}
Since $\ell$ is generic, we may assume without loss of generality that $\ell = {\rm span}\{F, M\}$ where $F$ has rank two. This gives a pair of linear projections $\pi_1, \pi_2:\PP^3\dashrightarrow \PP^2$ with non-coincident centers $c_1, c_2$ 
with fundamental matrix $F$. 
In the 1:1 correspondence $\mathcal{L}_F \leftrightarrow \mathcal{Q}_F$ given in Corollary~\ref{cor:line-quadric}, the line $\ell$ corresponds to 
the permissible quadric $Q$ given by the zero set of $q(u) = \pi_2(u)^{\top}M\pi_1(u)$. 
By Lemma~\ref{lem:QFtoCremona}, the Cremona transformation $f:\PP^2\dashrightarrow \PP^2$ corresponding to $q(u)$ in the correspondence $\mathcal{Q}_F \to \mathcal{C}$ satisfies 
$f(\pi_1(p)) = \pi_2(p)$ for all $p\in Q\backslash\{c_1,c_2\}$. Since $\pi_1(Q)$ is dense in $\PP^2$, 
the graph of $f$ and the set $\{(\pi_1(p), \pi_2(p)): p\in Q\backslash \{c_1, c_2\}\}\subset \PP^2\times \PP^2$ are both two-dimensional, as is their intersection. 
Each is the image of an irreducible variety under a rational map and so the 
Zariski-closures of these two sets are equal. 
By construction, this is contained in the zero sets of 
$y^TFx$ and $y^TMx$, as $\pi_2(p)^TF\pi_1(p)=0$ for all $p\in \PP^3$ and 
$\pi_2(p)^TM\pi_1(p)=0$ for all $p\in Q$. Since $F, M$ are linearly 
independent, the variety $\{(x,y) \,:\, y^TFx = y^TMx = 0 \}$ in $\PP^2\times \PP^2$ 
is two-dimensional. It therefore coincides with the Zariski-closure of the graph of $f$.

Conversely, suppose $f:\PP^2 \dashrightarrow \PP^2$ is a Cremona transformation. 
We claim that $\{f(x)x^\top : x\in \CC^3\}$
spans a $7$-dimensional linear space $V \subset \CC^{3 \times 3}$. Up to projective transformations on $\PP_x^2$ and $\PP_y^2$, we can take $f$ to be the standard Cremona involution, giving 
\begin{equation}\label{eq:cremona 7-dim space}
f(x)x^\top = \begin{pmatrix} x_1x_2x_3 & x_1^2x_3 & x_1^2x_2\\
x_2^2x_3 & x_1x_2x_3 & x_1x_2^2\\
x_2x_3^2 & x_1x_3^2 & x_1x_2x_3 \end{pmatrix}. 
\end{equation}
One can check explicitly that seven distinct monomials appear in this matrix and so the span of all such matrices is $7$-dimensional. 
Projectively, the orthogonal complement gives a line $\ell=V^\perp$ in $\PP(\CC^{3 \times 3})$. 
By definition, $\ell$ is exactly the set of all matrices $M$ such that $y^\top Mx=0$ for all $(x,y)$ in the graph of $f$. Under the assumption that $f$ is the standard Cremona transformation, $\ell$ is the span of the diagonal matrices $ F_1 = {\rm diag}(1,-1,0)$ and $F_2 = {\rm diag}(0,1,-1)$; in general $\ell$ will be projectively equivalent to this line. We can verify that this line contains exactly the three rank-two matrices, $F_1, F_2, F_1+F_2$, and is therefore generic. 
\end{proof}
\begin{rem}\label{rem:lineToCremonaEq} 
Given $\ell = {\rm span}\{F, M\}$ we can solve for the coordinates of the corresponding Cremona transformation $f:\PP^2\dashrightarrow \PP^2$ as follows.  Given $x\in \PP^2$, the corresponding point $y=f(x)$ will be the left kernel of the $3\times 2$ matrix $\begin{pmatrix} Fx & Mx\end{pmatrix}$. The coordinates of $y$ can be written explicitly in terms of the $2\times 2$ minors of this matrix, which are quadratic in $x$. Note that, up to scaling, this formula for $y$ is independent of the choice of basis $\{F, M\}$ for $\ell$. 
Any point $x\in \PP^2$ for which $\begin{pmatrix} Fx & Mx\end{pmatrix}$ has rank $\leq 1$ will be a base point of this Cremona transformation. In particular, if $Fx=0$, then $x$ is a base point of $f$. 
As we will see below, there are three such points when ranging over all rank-two matrices in $\ell$. 
\end{rem}

The next two results finish off the proof of the trinity correspondence \eqref{eq:trinity} and proof of Theorem~\ref{thm:trinity correspondence}.

\begin{lem}\label{lem:lines have well-defined cremona transformations} 
Let  $\ell$ be a generic line in $\PP(\CC^{3\times 3})$, i.e., $\ell$ contains three rank-two matrices 
$F_1, F_2, F_3$.
\begin{enumerate}
\item Then $\ell$ gives rise to three permissible quadrics $Q_1,Q_2,Q_3\subset\PP^3$, each containing the centers of a pair of linear projections with fundamental matrices $F_1,F_2,F_3$ respectively. 
\item The quadrics $Q_1,Q_2,Q_3$, in conjunction with their distinguished linear projections, all induce the same Cremona transformation $f$. The base points of $f$ are $e_1^x,e_2^x,e_3^x$ in the domain and $e_1^y,e_2^y,e_3^y$ in the codomain, where $e_i^x$ and $e_i^y$ generate the right and left nullspaces of $F_i$ respectively.
\end{enumerate}
\end{lem}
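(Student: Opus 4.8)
The plan is to exploit the explicit description of a generic line from Lemma~\ref{lem:lineToCremona}, where we may take $\ell=\operatorname{span}\{F_1,F_2,F_3\}$ to be, up to projective equivalence, the span of $F_1=\operatorname{diag}(1,-1,0)$ and $F_2=\operatorname{diag}(0,1,-1)$, so that $F_3=F_1+F_2=\operatorname{diag}(1,0,-1)$. For part~(1), I would apply Lemma~\ref{res:generic iff smooth} three times: since $\ell$ is generic and passes through each rank-two matrix $F_i$, it is in particular a generic line through $F_i$, and so under the bijection of Lemma~\ref{lem:lines through F gives quadric} (specialized at $F_i$) it corresponds to a permissible quadric $Q_i$. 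The data of $F_i$ also gives, via \cite[Theorem 9.10]{hartley-zisserman-2004}, a pair of linear projections $\pi_1^{(i)},\pi_2^{(i)}$ with distinct centers $c_1^{(i)},c_2^{(i)}$ having $F_i$ as fundamental matrix, and by construction $Q_i$ passes through those two centers. This yields the three quadrics.

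For part~(2), the key point is that all three quadrics induce \emph{the same} Cremona transformation, and this follows almost immediately from the final assertion of Lemma~\ref{lem:lineToCremona}. That lemma establishes a $1\colon 1$ correspondence in which the single generic line $\ell$ determines a unique Cremona transformation $f$ as the set of $(x,y)$ with $y^\top Mx=0$ for all $M\in\ell$, and it explicitly records that whenever a rank-two matrix $F\in\ell$ is chosen, the Cremona transformation produced by the composite $\mathcal{L}_F\to\mathcal{Q}_F\to\mathcal{C}$ agrees with this $f$. Since $F_1,F_2,F_3$ are all rank-two matrices lying on the same line $\ell$, each of the three composites produces the same $f$; equivalently, each $Q_i$ (with its distinguished projections) induces $f$ because the defining condition $y^\top M x=0$ for all $M\in\ell$ does not depend on which rank-two matrix we single out.

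It remains to identify the base points. I would use Remark~\ref{rem:lineToCremonaEq}: the base points of $f$ in the domain are exactly the $x\in\PP^2$ for which the $3\times 2$ matrix $(F_1x\ \ F_2x)$ drops to rank $\le 1$, and in particular any $x$ with $F_ix=0$ for some $i$ is such a point. Writing $e_i^x$ for the generator of the right nullspace of $F_i$, each $e_i^x$ is therefore a base point; in the normalized coordinates above one computes directly that $e_1^x,e_2^x,e_3^x$ are the three standard coordinate points $(1:0:0),(0:1:0),(0:0:1)$ and that the matrix $(F_1x\ \ F_2x)$ has rank $\ge 1$ everywhere on $\PP^2$ (as $\ell$ contains no zero matrix), so that these are precisely the three base points and no others. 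The symmetric argument applied to the left nullspaces, using that transposing $\ell$ corresponds to $f^{-1}$, identifies $e_1^y,e_2^y,e_3^y$ as the base points in the codomain.

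The main obstacle I anticipate is not any single deep step but rather the bookkeeping needed to argue that the three \emph{different} pairs of projections $(\pi_1^{(i)},\pi_2^{(i)})$, each attached to a different $F_i$ and a priori only defined up to a projective change of coordinates on $\PP^3$, really do feed into one and the same Cremona transformation on the fixed copies $\PP^2_x,\PP^2_y$. The clean way around this is to route everything through the coordinate-free characterization in Lemma~\ref{lem:lineToCremona}, namely that $f$ is determined by the line $\ell$ alone as the graph $\{(x,y):y^\top Mx=0\ \forall M\in\ell\}$, so that the ambiguity in the choice of $F_i$ and of its associated projections never enters the output $f$; the base-point computation is then a short and routine verification in the normalized coordinates.
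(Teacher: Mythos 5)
Your proposal is correct and follows essentially the same route as the paper's proof: part (1) via Lemma~\ref{lem:lines through F gives quadric} and Lemma~\ref{res:generic iff smooth} applied at each $F_i$, the agreement of the three induced Cremona transformations via the final assertion of Lemma~\ref{lem:lineToCremona}, and membership of each $e_i^x$ in the base locus via Remark~\ref{rem:lineToCremonaEq}. The only divergence is the final step: the paper notes that a non-degenerate Cremona transformation has exactly three base points, so it suffices to show $e_1^x,e_2^x,e_3^x$ are distinct, which it does coordinate-freely (if $e_1^x=e_2^x$ then every matrix in $\ell$ would kill that vector, forcing $\ell\subset\mathcal{D}$ and contradicting that $\ell$ meets $\mathcal{D}$ in only three points), whereas you normalize $\ell$ to ${\rm span}\{{\rm diag}(1,-1,0),{\rm diag}(0,1,-1)\}$ and compute the base locus explicitly; both work, but note that your parenthetical justification ``as $\ell$ contains no zero matrix'' is vacuous in $\PP(\CC^{3\times 3})$ --- what actually closes your argument is the direct computation that the $2\times 2$ minors $x_1x_2,\ x_1x_3,\ x_2x_3$ of $\begin{pmatrix}F_1x & F_2x\end{pmatrix}$ vanish simultaneously only at the three coordinate points.
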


\begin{proof}
A generic line $\ell \subset \PP(\CC^{3 \times 3})$ intersects the determinantal variety $\mathcal{D}$ cut out by $\det X = 0$  in three rank-two matrices $F_1,F_2,F_3$. 
Each $F_i$ is the fundamental matrix of a pair of linear projections $\PP^3\dashrightarrow\PP^2$ with noncoincident centers and, by Lemma~\ref{lem:lines through F gives quadric} and Lemma~\ref{res:generic iff smooth} there is a unique permissible quadric $Q_i$ through these centers corresponding to the line $\ell$.
By Lemma~\ref{lem:lineToCremona}, each of these quadrics induce the same Cremona transformation $f:\PP^2\dashrightarrow\PP^2$. 

To conclude, we show that the base points of $f$ and $f^{-1}$ are $e_1^x, e_2^x, e_3^x$ and $e_1^y, e_2^y, e_3^y$, respectively. We show that $e_1^x, e_2^x, e_3^x$ are the base points of $f$ and the argument for the base points of $f^{-1}$ follows symmetrically. 
First, note that each $e_i^x$ is a base point of $f$. This follows from Remark~\ref{rem:lineToCremonaEq}, since each $F_i\in \ell$ has rank two. 
Since the Cremona transformation $f$ has three base points, 
it only remains to show that these points are distinct.  
If $e_1^x = e_2^x$, then by linearity $Fe_1^x=0$ for all $F\in \ell = {\rm span}\{F_1, F_2\}$. 
This would imply that ${\rm rank}(F)\leq 2$ for all $F\in \ell$, contradicting genericity of the line $\ell$.
\end{proof}

\begin{cor} 
The correspondence $\mathcal{Q}\to\mathcal{C}$ is $3:1$.
\end{cor}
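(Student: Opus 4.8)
The plan is to deduce the corollary by assembling the two preceding lemmas rather than constructing anything new. The map $\mathcal{Q}\to\mathcal{C}$ sends a triple $(\pi_1,\pi_2,Q)$ to the Cremona transformation $f=\pi_2\circ(\pi_1|_Q)^{-1}$ produced in Lemma~\ref{lem:QFtoCremona}, and the goal is to show that every $f\in\mathcal{C}$ has a fiber of size exactly three. First I would fix $f\in\mathcal{C}$ and invoke the $1\colon 1$ correspondence $\mathcal{L}\leftrightarrow\mathcal{C}$ of Lemma~\ref{lem:lineToCremona} to obtain the unique generic line $\ell\subset\PP(\CC^{3\times 3})$ attached to $f$, together with its three rank-two matrices $F_1,F_2,F_3$.

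Next I would exhibit three preimages. By Lemma~\ref{lem:lines have well-defined cremona transformations}, the line $\ell$ gives rise to three permissible quadrics $Q_1,Q_2,Q_3$, each carrying a distinguished pair of projections with fundamental matrix $F_1,F_2,F_3$ respectively, and all three triples $(\pi_1^{(i)},\pi_2^{(i)},Q_i)$ induce the same Cremona transformation, namely the $f$ attached to $\ell$. Hence these three triples are elements of $\mathcal{Q}$ lying in the fiber over $f$, which in particular shows the map is surjective.

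The crux is to check that there are no further preimages and that these three are genuinely distinct. For the first point I would take an arbitrary $(\pi_1,\pi_2,Q)\in\mathcal{Q}$ mapping to $f$, let $F$ be its (rank-two) fundamental matrix, and apply Lemma~\ref{lem:lines through F gives quadric} together with Lemma~\ref{res:generic iff smooth} to see that $Q$ corresponds to a generic line $\ell'$ through $F$. The ``moreover'' clause of Lemma~\ref{lem:lineToCremona} identifies the Cremona transformation induced by $(\pi_1,\pi_2,Q)$ with the one attached to $\ell'$; since this transformation is $f$ and the line-to-Cremona correspondence is a bijection, $\ell'=\ell$ and therefore $F\in\{F_1,F_2,F_3\}$. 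For distinctness I would note that the fundamental matrix is invariant under the projective equivalence defining $\mathcal{Q}$, since replacing $(\pi_1,\pi_2)$ by $(\pi_1\circ H^{-1},\pi_2\circ H^{-1})$ for $H\in\PGL(4)$ leaves $F$ unchanged up to scale; thus the value $F\in\{F_1,F_2,F_3\}$ is a well-defined invariant that separates the three classes. Finally, since the pair of projections is determined up to this equivalence by $F$ (\cite[Theorem 9.10]{hartley-zisserman-2004}) and the quadric is then pinned down by the line $\ell$ through $F$ via Lemma~\ref{lem:lines through F gives quadric}, each of the three values of $F$ recovers exactly one class $(\pi_1^{(i)},\pi_2^{(i)},Q_i)$, so the fiber over $f$ is precisely these three elements.

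I expect the main obstacle to be the bookkeeping around the equivalence relation on $\mathcal{Q}$: one must verify that the fundamental matrix really is an invariant of each class, so that it legitimately distinguishes the three preimages, and that the phrase ``determined by $F$'' is applied in the correct projective-equivalence sense together with the naturality of the line-quadric bijection of Lemma~\ref{lem:lines through F gives quadric}. Once these well-definedness issues are settled, the count of three follows formally from Lemmas~\ref{lem:lineToCremona} and~\ref{lem:lines have well-defined cremona transformations}.
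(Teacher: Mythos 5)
Your proposal is correct and takes essentially the same route as the paper's proof: both arguments pin down the fiber over $f$ by combining the uniqueness of the generic line attached to $f$ (Lemma~\ref{lem:lineToCremona}), the three quadrics with distinguished projections produced by that line (Lemma~\ref{lem:lines have well-defined cremona transformations}), and the bijection between generic lines through a fixed fundamental matrix and permissible quadrics (Lemma~\ref{lem:lines through F gives quadric} with Lemma~\ref{res:generic iff smooth}). The only difference is one of bookkeeping: you make explicit the distinctness of the three preimage classes via the projective invariance of the fundamental matrix, a point the paper leaves implicit.
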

\begin{proof}
Let $\mathbf{Q}=(Q,\pi_1,\pi_2)\in\mathcal{Q}$ be a permissible quadric along with a pair of linear projections that correspond to $f\in\mathcal{C}$. If $F$ is the fundamental matrix associated to $(\pi_1, \pi_2)$, then there exists a unique generic line $\ell$ through $F$ corresponding to $Q$ by Lemma~\ref{lem:lines through F gives quadric} and Lemma~\ref{res:generic iff smooth}. With the full trinity correspondence, this line $\ell$ contains three fundamental matrices $F_1,F_2,F_3$ corresponding to $\mathbf{Q}_1,\mathbf{Q}_2,\mathbf{Q}_3 \in \mathcal{Q}$ that each produce the Cremona transformation $f$. Moreover, by Lemma~\ref{lem:lineToCremona}, this line $\ell$ is the unique line in $\PP(\CC^{3\times 3})$ 
corresponding to $f$. Therefore if $\mathbf{Q}'\in\mathcal{Q}$ is such that $\mathbf{Q}'\mapsto f$ it follows that $\pi'_1,\pi'_2$ have one of $F_1,F_2,F_3$ as their fundamental matrix and that the quadric $Q'$ is produced by the line $\ell$. We conclude that $\mathbf{Q}'$ is, up to projective equivalence, one of $\mathbf{Q}_1,\mathbf{Q}_2,\mathbf{Q}_3$.
\end{proof}

This completes the proof of Theorem~\ref{thm:trinity correspondence}. A consequence of Theorem~\ref{eq:trinity} is the following generalization of Problem~\ref{prob:geom}.

\begin{thm}
Given a generic codimension-two subspace $V \subset \PP(\CC^{3\times 3})$, the intersection of $V$ with $R_1$, the Segre embedding of $\PP^2\times\PP^2$, is a del Pezzo surface of degree six, and can be described explicitly via the trinity correspondence. Specifically, if $g:\PP^2\dashrightarrow\PP^2$ is the Cremona transformation corresponding to the line $V^\perp$, then  $$V\cap R_1=\{g(x)x^\top:x\in\PP^2\}\cup\{xg^{-1}(x)^\top:x\in\PP^2\}.$$
\end{thm}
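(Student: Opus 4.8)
The plan is to combine the trinity correspondence---specifically Lemma~\ref{lem:lineToCremona}---with an explicit resolution of $g$ obtained by blowing up its base points. Write $\ell=V^\perp$ for the line in $\PP(\CC^{3\times 3})$ dual to $V$ with respect to the trace form $\langle A,B\rangle=\sum_{i,j}A_{ij}B_{ij}$; by genericity of $V$ this is a generic line, and let $g$ be the Cremona transformation it determines. The first step is a linear-duality observation: a rank-one matrix $yx^\top\in R_1$ lies in $V$ if and only if $\langle yx^\top,M\rangle=y^\top Mx=0$ for every $M\in\ell$. By Lemma~\ref{lem:lineToCremona}, the locus of $(x,y)\in\PP^2\times\PP^2$ satisfying $y^\top Mx=0$ for all $M\in\ell$ is exactly the Zariski closure $\Gamma$ of the graph of $g$. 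Since the Segre map $(x,y)\mapsto yx^\top$ is an isomorphism onto $R_1$, this already identifies $V\cap R_1$ with the Segre image of $\Gamma$.

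The second step is to identify $\Gamma$ with a del Pezzo surface of degree six. Since $g$ is a non-degenerate quadratic Cremona transformation, its three base points $a_1,a_2,a_3$ are distinct and not collinear, hence in general position, and $g$ is resolved by the blow-up $S=\mathrm{Bl}_{a_1,a_2,a_3}\PP^2_x$: the net of conics through $a_1,a_2,a_3$ becomes base-point free on $S$ and defines a morphism $q\colon S\to\PP^2_y$ contracting the strict transforms $\tilde\gamma_i$ of the three lines $\gamma_i$ to the base points $b_i$ of $g^{-1}$, while the blow-down $p\colon S\to\PP^2_x$ contracts the exceptional divisors $E_i$. I would then verify that $(p,q)\colon S\to\PP^2_x\times\PP^2_y$ is a closed immersion with image $\Gamma$: it is injective because $q$ restricts to an isomorphism $E_i\to\tau_i$ on each exceptional divisor, and it is an immersion because $\ker dp$ is supported on $\bigcup_iE_i$, $\ker dq$ on $\bigcup_i\tilde\gamma_i$, and at the six points $E_i\cap\tilde\gamma_j$ (with $i\neq j$) the curves $E_i$ and $\tilde\gamma_j$ are transverse, so $\ker dp\cap\ker dq=0$ everywhere. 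Hence $\Gamma\cong S=\mathrm{Bl}_3\PP^2$ is smooth and is a del Pezzo surface of degree $K_S^2=9-3=6$. Writing $H=p^*\mathcal{O}(1)$, one has $q^*\mathcal{O}(1)=2H-E_1-E_2-E_3$, so the pullback of the Segre hyperplane class is $p^*\mathcal{O}(1)+q^*\mathcal{O}(1)=3H-E_1-E_2-E_3=-K_S$; thus the Segre map restricts to the anticanonical embedding of $\Gamma$ and $V\cap R_1$ is a degree-six surface spanning $V\cong\PP^6$ (consistent with $V\cap R_1$ being a generic codimension-two linear section of the degree-$\binom{4}{2}=6$ fourfold $R_1$).

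For the explicit description I would read off the two parametrizations of $\Gamma$ coming from its two blow-down structures. Away from $B(g)$ the map $x\mapsto g(x)x^\top$---whose entries are the cubics obtained by multiplying the quadratic coordinates of $g$ from Remark~\ref{rem:lineToCremonaEq} by those of $x$---sends $x$ to the Segre point of $(x,g(x))\in\Gamma$, so its image is the dense open subset $\Gamma\setminus\bigcup_iE_i$; symmetrically $x\mapsto x\,g^{-1}(x)^\top$, now with $x$ ranging over $\PP^2_y$, has image $\Gamma\setminus\bigcup_i\tilde\gamma_i$. Each is dense in the irreducible surface $\Gamma$, so taking Zariski closures gives $\overline{\{g(x)x^\top\}}=\Gamma=\overline{\{x\,g^{-1}(x)^\top\}}$, whence $V\cap R_1=\Gamma=\{g(x)x^\top:x\in\PP^2\}\cup\{x\,g^{-1}(x)^\top:x\in\PP^2\}$, as claimed. (At the set-theoretic level the two naive images are complementary on the hexagon of $(-1)$-curves and together miss only the six points $E_i\cap\tilde\gamma_j$, which is exactly why the closure is needed and why the union is the natural symmetric description.)

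I expect the main obstacle to be the second step: showing rigorously that the graph closure $\Gamma$ is \emph{smooth} and equals the blow-up $S$---that is, that $(p,q)$ is a genuine closed immersion rather than merely a birational bijection---and in particular controlling the behavior at the six vertices of the hexagon of exceptional curves, where both $dp$ and $dq$ have nontrivial kernel. Once that is in place, the duality step and the parametrization bookkeeping are routine.
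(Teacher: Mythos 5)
Your proposal is correct in substance and follows essentially the same route as the paper: both use Lemma~\ref{lem:lineToCremona} to identify $V\cap R_1$, via the trace pairing, with the closure $\Gamma$ of the graph of $g$, and both recognize that surface as the blow-up of $\PP^2$ at the three base points of $g$, hence a del Pezzo surface of degree six. The differences are organizational. The paper starts from the displayed union $V_1$, exhibits the blow-down $\pi_x(vu^\top)=u$ to argue that $V_1$ is the blow-up and therefore Zariski closed, and only then invokes Lemma~\ref{lem:lineToCremona}; you start from $V\cap R_1=\Gamma$ and then analyze $\Gamma$. Your verification that $(p,q):\mathrm{Bl}_{a_1,a_2,a_3}\PP^2\to\PP^2_x\times\PP^2_y$ is a closed immersion, and your identification of the Segre hyperplane class with $-K_S$, are more detailed than anything the paper writes down, and both are correct.

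The one defect is your last inference. From $\overline{\{g(x)x^\top\}}=\Gamma=\overline{\{xg^{-1}(x)^\top\}}$ it does not follow that the union of the two literal images equals $\Gamma$, and in fact it does not: as your own parenthetical records, the union omits exactly the six points $E_i\cap\tilde\gamma_j$, $i\neq j$, i.e.\ the matrices $b_ja_i^\top$, since $g$ is undefined at $a_i$ and $g^{-1}$ is undefined at $b_j$. So what your argument actually establishes is $V\cap R_1=\overline{\{g(x)x^\top:x\in\PP^2\}\cup\{xg^{-1}(x)^\top:x\in\PP^2\}}$, which differs from the asserted set equality at those six points. To be fair, this imprecision is inherited from the paper itself: its statement has the same issue, and its proof both treats the full lines $\{y(e_i^x)^\top:y\in\ell_i^y\}$ as lying in $V_1$ and declares $V_1$ Zariski closed, each of which fails at the same six matrices. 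Your write-up should therefore either prove the statement with the closure (equivalently, with $g(x)x^\top$ read as the resolved morphism on the blow-up) or note explicitly that the equality holds only off these six points; with that emendation your proof is complete and, on this point, sharper than the paper's.
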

\begin{proof}
For convenience, we denote
$$V_1:=\{g(x)x^\top:x\in\PP^2\}\cup\{xg^{-1}(x)^\top:x\in\PP^2\}.$$ 
To see that this is a degree-six del Pezzo surface, we show that $V_1$ can be obtained as the blow up of $\PP^2$ in three non-collinear points, specifically, at the base points of $g$: $e_1^x,e_2^x,e_3^x$. 
Let $\pi_x:V_1\dashrightarrow\PP^2$ be the morphism defined by $\pi_x(vu^\top)=u$. Let $\ell_i^y$ be the exceptional lines of $g$ such that $g^{-1}(\ell_i^y)=e_i^x$. Then $\pi_x$ is $1:1$ except on three mutually skew lines $\{y(e_i^x)^\top:y\in\ell_i^y\}$ which are taken to the points $\{e_i^x\}$. Therefore $V_1$ is the blow up of $\PP^2$ in three non-collinear points and is a del Pezzo surface of degree six. 

In particular, $V_1$ must be Zariski closed and it follows by Lemma \ref{lem:lineToCremona} that $V\cap R_1=V_1$.
\end{proof}

\subsection{Back to the proof of Theorem~\ref{thm:k=8 main result}}
\label{subsec:reverse direction proof}
Before we can adapt the trinity correspondence to the reconstruction of point pairs, we need to address a certain kind of degeneracy. Given a configuration of point pairs $P = (x_i,y_i)_{i=1}^k$ consider the matrix $Z=(x_i^\top \otimes y_i^\top)_{i=1}^k$ and its right nullspace $\mathcal{N}_Z$.

\begin{lem}\label{lem:P-degen}
Suppose $P=(x_i,y_i)_{i=1}^k$ admits a generic line $\ell \subseteq \mathcal{N}_Z$  (passing through three rank-two matrices $F_1,F_2,F_3$). Then for all $j=1,2,3,$ there is no $i$ such that $y_i^\top F_j=0 = F_jx_i$.
\end{lem}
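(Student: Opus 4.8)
The plan is to argue by contradiction through the trinity correspondence. The hypothesis supplies a generic line $\ell\subseteq\mathcal{N}_Z$, and by Lemma~\ref{lem:lineToCremona} this line corresponds to a unique Cremona transformation $f:\PP^2_x\dashrightarrow\PP^2_y$ whose graph closure is exactly the set $\{(x,y): y^\top Mx=0 \text{ for all } M\in\ell\}$. The first step is to observe that $\operatorname{vec}(M)\in\mathcal{N}_Z$ for every $M\in\ell$, so each row of $Z$ yields $y_i^\top Mx_i=0$ for all $M\in\ell$; hence every input pair $(x_i,y_i)$ lies on the closure of the graph of $f$.

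Next I would restate the conclusion in terms of base points. By Lemma~\ref{lem:lines have well-defined cremona transformations}, the three rank-two matrices $F_1,F_2,F_3\in\ell$ have right nullspaces spanned by the domain base points $e_1^x,e_2^x,e_3^x$ of $f$ and left nullspaces spanned by the codomain base points $e_1^y,e_2^y,e_3^y$. Since each $F_j$ has rank two, the condition $y_i^\top F_j=0=F_jx_i$ is equivalent to $x_i=e_j^x$ and $y_i=e_j^y$ projectively. It therefore suffices to prove that the pair $(e_j^x,e_j^y)$ never lies on the closure of the graph of $f$, which contradicts the previous paragraph.

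For this I would invoke the explicit fiber structure of the graph closure, namely the degree-six del Pezzo surface described in the theorem preceding this subsection. Under the projection to the $x$-factor this surface is $1:1$ away from the base points, while the fiber over $e_j^x$ is $\{e_j^x\}\times\tau_j$, where $\tau_j$ is the exceptional line of $f^{-1}$ collapsed to $e_j^x$. Thus $(e_j^x,y)$ lies on the graph closure only when $y\in\tau_j$. Since $\tau_j$ contains precisely the two codomain base points other than $e_j^y$, non-degeneracy of $f$ (all base points and exceptional lines distinct) forces $e_j^y\notin\tau_j$. Hence $(e_j^x,e_j^y)$ is not on the graph closure, completing the contradiction.

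The main obstacle is pinning down the fiber of the graph closure over a base point and matching it correctly with the exceptional line of $f^{-1}$; once this is done, everything reduces to the clean combinatorial fact that a base point of a non-degenerate Cremona transformation never lies on its own exceptional line. An equivalent matrix-theoretic phrasing, which I could use to bypass explicit reference to the del Pezzo surface, is that $(e_j^x,e_j^y)$ lies on the graph closure if and only if the rank-one matrix $e_j^y(e_j^x)^\top$ is Frobenius-orthogonal to $\ell$, i.e. lies in the $7$-dimensional space $V=\operatorname{span}\{f(x)x^\top\}$ of Lemma~\ref{lem:lineToCremona}; the same non-degeneracy argument then rules this out.
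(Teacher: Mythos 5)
Your opening reductions are correct: since $\ell\subseteq\mathcal{N}_Z$, every pair $(x_i,y_i)$ lies on the variety $\{(x,y):y^\top Mx=0 \text{ for all } M\in\ell\}$, which by Lemma~\ref{lem:lineToCremona} is the graph closure of $f$; and since $F_j$ has rank two, $y_i^\top F_j=0=F_jx_i$ forces $(x_i,y_i)=(e_j^x,e_j^y)$ projectively. But the step carrying all the content --- ``$\tau_j$ contains precisely the two codomain base points \emph{other than} $e_j^y$'' --- is asserted, not proved, and it does not follow from non-degeneracy of $f$. What is standard is that the exceptional line of $f^{-1}$ collapsed to $e_j^x$ passes through two of the three base points of $f^{-1}$; \emph{which} two depends on knowing that the pairing of base points defined by the matrices (right kernel of $F_j$ with left kernel of the same $F_j$) agrees with the intrinsic pairing of the Cremona transformation (a domain base point with the unique codomain base point \emph{not} lying on the line collapsed to it). If those two pairings disagreed, then $(e_j^x,e_j^y)$ \emph{would} lie on the graph closure and your contradiction would evaporate. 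Since your own reduction shows the lemma is equivalent to the statement that $(e_j^x,e_j^y)$ avoids the graph closure, asserting the pairing is essentially asserting the lemma; the closing ``matrix-theoretic rephrasing'' has the same defect, since no argument is given for why $e_j^y(e_j^x)^\top\notin\ell^\perp$.

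The gap is genuine but fixable with tools you already invoke. The fiber of the graph closure over $e_j^x$ is the line $\{y:y^\top Me_j^x=0\}$ for any $M\in\ell$ independent of $F_j$ (the condition coming from $F_j$ itself is vacuous because $F_je_j^x=0$, and all other $M$ give proportional normals). Writing $F_k$, $k\neq j$, for the other rank-two matrices on $\ell$, one checks directly that $e_k^y$ lies on this fiber: $(e_k^y)^\top F_je_j^x=0$ because $F_je_j^x=0$, and $(e_k^y)^\top F_ke_j^x=0$ because $(e_k^y)^\top F_k=0$, so $(e_k^y)^\top Me_j^x=0$ for every $M\in{\rm span}\{F_j,F_k\}=\ell$. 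Hence the fiber is exactly the line through the two base points $e_k^y,e_l^y$ with $k,l\neq j$, and $e_j^y$ lies on it only if the three base points of $f^{-1}$ are collinear, contradicting non-degeneracy. (Equivalently: $\tfrac{d}{dt}\det(F_j+tM)\big|_{t=0}={\rm tr}({\rm adj}(F_j)M)$ is a nonzero multiple of $(e_j^y)^\top Me_j^x$, so vanishing of this quantity would make $\ell$ tangent to $\mathcal{D}$ at $F_j$, contradicting genericity of $\ell$.) For comparison, the paper sidesteps this entirely by arguing through the quadric leg of the trinity: the degenerate pair forces the whole line $\overline{c_1c_2}$ joining the camera centers into the quadric associated to $\ell$ at $F_1$, contradicting permissibility (Lemma~\ref{res:generic iff smooth}). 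Your route through the Cremona leg is genuinely different and does work, but only after the pairing fact above is actually established.
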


\begin{proof}
Suppose, without loss of generality, $y_1^\top F_1=0 = F_1x_1$.
From the matrix $F_1$ and the line $\ell$ through it we obtain a pair of projections $\pi_1,\pi_2$ with centers $c_1,c_2$, and a smooth permissible quadric $Q$ passing through them. Then 
$\pi_2(c_1)$ and $\pi_1(c_2)$ are the left and right epipoles of $F_1$, but since 
$y_1^\top F_1=0 = F_1x_1$, it must be that $y_1 \sim \pi_2(c_1)$ and $x_1 \sim \pi_1(c_2)$. On the other hand, for any point $p$ on the line connecting $c_1,c_2$, 
$$\pi_2(p)^\top F_2\pi_1(p)= \pi_2(c_1) F_2 \pi_1(c_2) = y_1^\top F_2x_1=0$$ 
since $F_2 \in \mathcal{N}_Z$. Therefore, by Corollary~\ref{cor:line-quadric}, 
$p\in Q$ and thus $\overline{c_1c_2}\subset Q$, which is a contradiction since $Q$ is permissible. 
\end{proof}

Even though a rank-two matrix $F$ on a generic line in $\mathcal{N}_Z$ cannot have 
$y_i^\top F = 0 = F x_i$, it might be that one of the equations hold. We  
name this type of degeneracy in the following definition. 

\begin{defn}
A generic line $\ell \subseteq \mathcal{N}_Z$ is \textbf{$P$-degenerate} if there exists a rank-two matrix $F\in\ell$ such that either $Fx_i=0$ or $y_i^\top F=0$ for some $i$. 
We call a generic line that is not $P$-degenerate a \textbf{$P$-generic} line. 
\end{defn}

Any rank-two matrix $F$ in a $P$-generic line will give a reconstruction 
$c_1, c_2, p_1, \hdots, p_k$ of the point pairs $P$. That is, there will be 
linear projections $\pi_1, \pi_2:\PP^3\dashrightarrow\PP^2$ with centers $c_1, c_2$
so that $\pi_2^{\top}(p)F\pi_1(p)=0$ for all $p\in \PP^3$ and $(x_i, y_i) = (\pi_1(p_i), \pi_2(p_i))$ for all $i=1, \hdots, k$. 
A smooth quadric $Q$ will contain two lines through any of its points. 

\begin{defn}
A quadric $Q\subset \PP^3$ passes \textbf{degenerately} through a reconstruction $c_1, c_2$, $\{p_i\}_{i=1}^k$ of $P$ if it passes through these $k+2$ points and contains the line through a center point $c_i$ and reconstructed point $p_j$.
\end{defn}

\begin{defn}
A Cremona transformation $f:\PP^2\dashrightarrow \PP^2$ maps $x_i\mapsto y_i$ \textbf{degenerately} if $x_i$ is a base point of $f$ and $y_i$ lies on the corresponding exceptional line, or symmetrically,  $y_i$ is a base point of $f^{-1}$ and $x_i$ lies on the corresponding exceptional line. 
\end{defn}

Generically, the trinity correspondence specializes to the reconstruction of point pairs in an intuitive way.

\begin{thm}\label{thm:trinity correspondence (vision)}
Given a configuration of point pairs $P = (x_i,y_i)_{i=1}^k$ and 
the matrix $Z=(x_i^\top \otimes y_i^\top)_{i=1}^k$, define the  
following subsets of $\mathcal{L},\mathcal{Q},\mathcal{C}$: 
\begin{enumerate}
    \item $\mathcal{L}_{P}$: the set of all $P$-generic lines $\ell\subseteq \mathcal{N}_{Z}:= \textup{nullspace}(Z)$, 
    \item $\mathcal{Q}_P$: the set (up to projective equivalence) of all permissible quadrics passing non-degenerately through some reconstruction $c_1,c_2, p_1, \hdots, p_k$ of $P$, 
    \item $\mathcal{C}_{P}$: the set of all Cremona transformations $f:\PP^2\dashrightarrow\PP^2$ mapping  $x_i\mapsto y_i$ non-degenerately for all $i =1,\hdots, k$.
\end{enumerate}
Then there is a $1:1$ correspondence between the elements of $\mathcal{L}_{P}$ and $\mathcal{C}_P$, 
a $1:3$ correspondence between the elements of $\mathcal{L}_{P}$ and $\mathcal{Q}_P$, and a $3:1$ 
correspondence between the elements of $\mathcal{Q}_P$ and $\mathcal{C}_P$ as in:
\begin{equation}\label{eq:trinity (vision)}
\begin{tikzcd}[column sep=small]
&\mathcal{Q}_P\arrow[dr, rightarrow, "3\colon 1"]\\
\mathcal{L}_P\arrow[ur, rightarrow, "1\colon 3"]\arrow[rr, leftrightarrow]&&\mathcal{C}_P
\end{tikzcd}
\end{equation}
\end{thm}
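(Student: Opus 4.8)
The plan is to show that the general trinity correspondence of Theorem~\ref{thm:trinity correspondence} restricts to the three distinguished subsets $\mathcal{L}_P, \mathcal{Q}_P, \mathcal{C}_P$, and that the restriction is again a bijection (resp.\ $1{:}3$, $3{:}1$ correspondence) with commuting diagram. Since we already have the full correspondence on $\mathcal{L} \supseteq \mathcal{L}_P$, $\mathcal{Q} \supseteq \mathcal{Q}_P$, $\mathcal{C} \supseteq \mathcal{C}_P$ together with commutativity of \eqref{eq:trinity}, it suffices to prove that the three maps carry the $P$-labelled subsets \emph{onto} one another. Concretely, the heart of the argument is a triple of equivalences: for an element $\ell \in \mathcal{L}$ corresponding to $\mathbf{Q} \in \mathcal{Q}$ (one of its three permissible quadrics) and to $f \in \mathcal{C}$, I want to show
\[
\ell \in \mathcal{L}_P \iff \mathbf{Q} \in \mathcal{Q}_P \iff f \in \mathcal{C}_P.
\]
Because the underlying correspondence is already a bijection on the ambient sets and the diagram commutes, these three equivalences immediately yield that \eqref{eq:trinity (vision)} is the induced restriction and that the multiplicities $1{:}3$ and $3{:}1$ are inherited.

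First I would translate each of the three $P$-conditions into a statement about a single rank-two matrix $F_j \in \ell$ and its reconstruction. A line $\ell \subseteq \mathcal{N}_Z$ is exactly the condition $y_i^\top M x_i = 0$ for all $M \in \ell$ and all $i$; by Lemma~\ref{lem:lineToCremona} this says every $(x_i,y_i)$ lies on the graph of the associated Cremona transformation $f$, i.e.\ $f(x_i) = y_i$ (in the closure sense), so $\ell \subseteq \mathcal{N}_Z \iff f$ sends $x_i \mapsto y_i$ for all $i$. I would then argue that the \emph{$P$-generic} refinement of $\ell$ corresponds precisely to the \emph{non-degenerate} refinement of $f$: by Remark~\ref{rem:lineToCremonaEq}, the base points of $f$ are exactly the right null-vectors $e_j^x$ of the three rank-two matrices $F_j \in \ell$, and symmetrically the base points of $f^{-1}$ are the left null-vectors $e_j^y$. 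Hence $F_j x_i = 0$ means $x_i$ is the base point $e_j^x$, and then the epipolar relation $y_i^\top F_j = 0$ would force $y_i = e_j^y$; but Lemma~\ref{lem:P-degen} already rules out $F_j x_i = 0 = y_i^\top F_j$ simultaneously, so $x_i = e_j^x$ forces $y_i$ onto the corresponding exceptional line rather than onto $e_j^y$. This is exactly the definition of $f$ mapping $x_i \mapsto y_i$ \emph{degenerately}. Thus $\ell$ is $P$-degenerate iff $f$ maps some $x_i \mapsto y_i$ degenerately, giving $\mathcal{L}_P \leftrightarrow \mathcal{C}_P$.

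Next I would handle the quadric leg $\mathcal{L}_P \to \mathcal{Q}_P$. Fix a rank-two $F_j \in \ell$; by the discussion in \S\ref{sec:vision stuff} it produces projections $\pi_1,\pi_2$ with centers $c_1,c_2$ and, via Lemma~\ref{lem:lines through F gives quadric} and Lemma~\ref{res:generic iff smooth}, a permissible quadric $Q_j$ through $c_1,c_2$. For $\ell$ to be $P$-generic I would show one can lift each point pair to a reconstructed world point $p_i$ lying on $Q_j$: since $\ell$ is $P$-generic, neither $x_i$ nor $y_i$ is an epipole of $F_j$ in the bad simultaneous way, so $(x_i,y_i)$ admits a preimage $p_i \in \PP^3$ under the pair $(\pi_1,\pi_2)$, and the relation $y_i^\top M x_i = 0$ for the second generator $M$ of $\ell$ translates, via Corollary~\ref{cor:line-quadric}, into $p_i \in Q_j$. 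That $Q_j$ passes \emph{non-degenerately} through this reconstruction is precisely the $P$-generic condition re-expressed: $Q_j$ containing the line $\overline{c_a p_i}$ would, under $\pi_a$, collapse $p_i$ to a base point of $f$ and put the partner point on an exceptional line, i.e.\ the degenerate-mapping case just excluded. Conversely, any permissible quadric through a reconstruction of $P$ lies in $\mathcal{N}_Z$ by Corollary~\ref{cor:line-quadric} and hence comes from a line $\ell \subseteq \mathcal{N}_Z$, with non-degeneracy of the quadric matching $P$-genericity of $\ell$. Commutativity of the restricted triangle is inherited from \eqref{eq:trinity}, and the $1{:}3$ and $3{:}1$ counts persist because the three quadrics $Q_1,Q_2,Q_3$ attached to a single $P$-generic line are all non-degenerate simultaneously (the obstruction $\overline{c_1c_2}\subseteq Q$ and the exceptional-line collapse are line-level, not $F_j$-level, conditions).

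\emph{The main obstacle} I anticipate is the careful bookkeeping of the reconstruction/degeneracy dictionary in the quadric leg: translating ``$Q$ passes degenerately through the reconstruction'' into ``$f$ maps some $x_i \mapsto y_i$ degenerately,'' and checking this is genuinely independent of \emph{which} of the three rank-two matrices $F_1,F_2,F_3 \in \ell$ one uses to build the reconstruction. I would isolate this as the key lemma, arguing via Lemma~\ref{lem:QFtoCremona}: the base points of $f$ are $\pi_1(c_2)$ together with the $\pi_1$-images of the two lines of $Q$ through $c_1$, so a reconstructed point $p_i$ whose connecting line $\overline{c_1 p_i}$ lies on $Q$ is mapped by $\pi_1$ to one of these base points, and then $\pi_2(p_i) = y_i$ sits on the image exceptional line. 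Since the base points and exceptional lines of $f$ depend only on $f$ (equivalently only on $\ell$) and not on the chosen $F_j$, the degeneracy condition is line-level, which simultaneously secures the well-definedness across the three quadrics and closes the equivalence loop.
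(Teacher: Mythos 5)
Your proposal is correct and takes essentially the same route as the paper: both restrict the ambient trinity correspondence of Theorem~\ref{thm:trinity correspondence} leg by leg, using the identical dictionary (base points of $f$ $\leftrightarrow$ kernels of the rank-two matrices in $\ell$ via Lemma~\ref{lem:lines have well-defined cremona transformations}/Remark~\ref{rem:lineToCremonaEq} $\leftrightarrow$ lines of $Q$ through the centers via Lemma~\ref{lem:QFtoCremona}) to match $P$-genericity of $\ell$ with non-degenerate passage of $Q$ and non-degenerate mapping by $f$. The only difference is organizational --- the paper establishes the directed cycle $\mathcal{L}_P\to\mathcal{Q}_P\to\mathcal{C}_P\to\mathcal{L}_P$ while you prove biconditionals on each leg (and your appeal to Lemma~\ref{lem:P-degen} in the $\mathcal{L}_P\leftrightarrow\mathcal{C}_P$ step is a harmless detour, since the graph-closure argument you already set up forces $y_i$ onto the exceptional line directly) --- which is cosmetic rather than a genuinely different proof.
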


\begin{proof}
We need to show that the trinity correspondence \eqref{eq:trinity} can be 
restricted to the sets $\mathcal{L}_P, \mathcal{Q}_P, \mathcal{C}_P$.
We will therefore examine each leg of this diagram.

($\mathcal{L}_P\to\mathcal{Q}_P$)  We begin by considering a $P$-generic line $\ell = {\rm span}\{F,M\} \subseteq \mathcal{N}_Z$.  
Without loss of generality, we can take $F$ to be one of the three fundamental matrices in $\ell$ with corresponding 
projections $\pi_1, \pi_2:\PP^3\dashrightarrow\PP^2$ with non-coincident centers $c_1, c_2$ that give reconstructions $p_1, \hdots, p_k\in \PP^3$ of the point pairs $P$. 
By Lemma~\ref{lem:lines through F gives quadric}, the line $\ell$ corresponds to a smooth permissible quadric $Q$ defined by the vanishing of $q(u) = \pi_2(u)^TM\pi_1(u)$. 
For any point $p_i$ in the reconstruction, we have 
\begin{equation}\label{eq:lineToQuadricPoints}
    q(p_i) = \pi_2(p_i)^\top M\pi_1(p_i)=y_i^\top Mx_i=0
\end{equation}
since $M\in \ell\subset \mathcal{N}_Z$. Therefore $Q$ passes through the reconstruction $c_1, c_2, p_1, \hdots, p_k$. 
It remains to show that it does so non-degenerately.  
By Lemmas~\ref{lem:QFtoCremona} and~\ref{lem:lines have well-defined cremona transformations}, 
a reconstructed point $p_i$ lies on one of the lines through $c_1$ (or symmetrically through $c_2$) if and only if there exists $M\in\ell$ such that $Mx_i=0$ (or symmetrically $y_i^\top M=0)$. 
Since $\ell$ is $P$-generic there is no such $M$, implying that the quadric passes through the reconstruction non-degenerately.

($\mathcal{Q}_P\to\mathcal{C}_P$)
Consider a permissible quadric $Q$ passing through a reconstruction $c_1,c_2, p_1, \hdots, p_k$ of $P$ with linear projections $\pi_1,\pi_2$. As in Theorem~\ref{thm:trinity correspondence}, 
the tuple $(Q,\pi_1, \pi_2)$ induces a Cremona transformation $f := \pi_2 \circ (\pi_1|_{Q})^{-1}$. By Lemma~\ref{lem:QFtoCremona}, the base points of $f$
are the images of the point $c_2$ and each of the lines in $Q$ passing through $c_1$. Since $p_i\neq c_2$ and does not belong to these lines, the point $x_i = \pi_1(p_i)$ is not a base point of $f$.  Similarly, the base points of $f^{-1}$ are the images of the point $c_1$ and the lines in $Q$ passing through $c_2$ under $\pi_2$, so a symmetric argument 
shows that $y_i = \pi_2(p_i)$ is not a base point of $f^{-1}$. 
Therefore $f$ maps $x_i = \pi_1(p_i)$ to $y_i = \pi_2(p_i)$ non-degenerately. 

($\mathcal{C}_P\to \mathcal{L}_P$) Consider a Cremona transformation $f:\PP^2\dashrightarrow\PP^2$ such that $x_i\mapsto y_i$ non-degenerately for all $i$. 
As in Lemma~\ref{lem:lineToCremona}, $f$ corresponds to a unique line 
$\ell \subset \PP(\CC^{3\times 3})$ defined by the property that $f(x)^\top Mx=0$ for all $M\in\ell$ and $x\in\PP^2$. In particular, $y_i^\top Mx_i=0$ for all $M\in\ell$ and $i=1,\ldots,k$, 
implying that $\ell\subseteq \mathcal{N}_Z$. By assumption, no point $x_i$ is a base point of $f$ and no point $y_i$ is a base point of $f^{-1}$. By Lemma~\ref{lem:lines have well-defined cremona transformations}, 
it then follows that $Mx_i\neq 0$ and $y_i^{\top}M\neq 0$ for all $M\in \ell$. 
Therefore $\ell$ is not $P$-degenerate. 
\end{proof}

\begin{rem}
 The assumptions of non-degeneracy can be removed from the 1:1 correspondence between generic lines in $\mathcal{N}_Z$ and Cremona transformations mapping $x_i\mapsto y_i$. Extending this to quadrics is more subtle, as 
 some rank-two matrices  $F\in \ell\subset \mathcal{N}_Z$ may not give full reconstructions of the point pairs $P$. 
\end{rem}

\begin{proof}[Proof of Theorem~\ref{thm:k=8 main result}($\Rightarrow$)]
For $k=8$ semi-generic point pairs, the matrix $Z=(x_i^\top \otimes y_i^\top)_{i=1}^8$ is rank deficient exactly when $\mathcal{N}_Z =: \ell$ is a line. This line $\ell$ is generic because it is also the nullspace of any submatrix of $Z$ of size $7 \times 9$ and the corresponding seven point pairs are generic.  Pick a subset of seven point pairs, say $(x_i,y_i)_{i=1}^7$, from the original eight pairs. Since these seven point pairs are generic, and $\ell$ is also generic, we can assume that 
$Fx_i \neq 0$ and $y_i^\top F \neq 0$ for any rank-two matrix $F \in \ell$ and  all $i=1,\ldots,7$. On the other hand, if we pick a different set of seven 
point pairs, say $(x_i,y_i)_{i=2}^8$, then $\ell$ is also the nullspace of the 
corresponding $Z_7$ and by the same argument as before, $Fx_i \neq 0$ and $y_i^\top F \neq 0$ for any rank-two matrix $F \in \ell$ and  all $i=2,\ldots,8$. Therefore, $\ell$ is $P$-generic.

Since $\ell$ is $P$-generic, by 
Theorem~\ref{thm:trinity correspondence (vision)}, 
$\ell$ gives rise to a Cremona transformation $f:\PP^2_x\dashrightarrow\PP^2_y$ such that $f(x_i)=y_i$ for all $i=1,\ldots,8$. This finishes the proof of Theorem~\ref{thm:k=8 main result}.
\end{proof}  

We end this section by demonstrating the trinity correspondence on an example, beginning with a single quadric through a reconstruction.

\begin{ex}
Consider the quadric $Q\subset\PP^3$ defined by the equation $x^2+y^2-z^2-w^2=0$ and the following $10$ points $p_1,\ldots,p_8,c_1,c_2\in Q$:
\begin{align*}
c_1&=(1:0:0:1)\quad\quad &&c_2=(0:1:0:1)\\
p_1&=(5:12:13:0)\quad\quad &&p_2=(13:0:5:12)\\
p_3&=(12:5:13:0)\quad\quad &&p_4=(3:4:5:0)\\
p_5&=(4:3:5:0)\quad\quad &&p_6=(3:4:0:5)\\
p_7&=(4:3:0:5)\quad\quad &&p_8=(5:0:4:3)
\end{align*} 
The two projections (cameras) with centers $c_1,c_2$ have matrices
\[
A_1=\begin{bmatrix}1 & 0 & 0 & -1\\ 0 & 1 & 0 & 0\\ 0 & 0 & 1 & 0\end{bmatrix},\quad\quad 
A_2=\begin{bmatrix}1 & 0 & 0 & 0\\ 0 & 1 & 0 & -1\\ 0 & 0 & 1 & 0\end{bmatrix}
\]
and we can calculate the image points and epipoles
\begin{align*}
e_x&=(-1:1:0)\quad\quad &&e_y=(1:-1:0)\\
x_1&=(5:12:13)\quad\quad &&y_1=(5:12:13)\\
x_2&=(1:0:5)\quad\quad &&y_2=(13:-12:5)\\
x_3&=(12:5:13)\quad\quad &&y_3=(12:5:13)\\
x_4&=(3:4:5)\quad\quad &&y_4=(3:4:5)\\
x_5&=(4:3:5)\quad\quad &&y_5=(4:3:5)\\
x_6&=(-2:4:0)\quad\quad &&y_6=(3:-1:0)\\
x_7&=(-1:3:0)\quad\quad &&y_7=(4:-2:0)\\
x_8&=(2:0:4)\quad\quad &&y_8=(5:-3:4).
\end{align*}
The point pairs $(x_i,y_i)$  give us the matrix
\begin{align*}
Z_8=
\begin{bmatrix}
 25 & 60 & 65 & 60 & 144 & 156 & 65 & 156 & 169 \\
 13 & -12 & 5 & 0 & 0 & 0 & 65 & -60 & 25 \\
 144 & 60 & 156 & 60 & 25 & 65 & 156 & 65 & 169 \\
 9 & 12 & 15 & 12 & 16 & 20 & 15 & 20 & 25 \\
 16 & 12 & 20 & 12 & 9 & 15 & 20 & 15 & 25 \\
 -6 & 2 & 0 & 12 & -4 & 0 & 0 & 0 & 0 \\
 -4 & 2 & 0 & 12 & -6 & 0 & 0 & 0 & 0 \\
 10 & -6 & 8 & 0 & 0 & 0 & 20 & -12 & 16 \\
\end{bmatrix}
\end{align*}
which we can check is rank deficient and has nullspace spanned by the vectors
\begin{align*}
m_1=(-1, 1, 0, -1, -1, 0, 0, 0, 1),\quad\quad m_2=(0, 0, -1, 0, 0, -1, 1, 1, 0).
\end{align*}
The reconstruction we started with has fundamental matrix
\[
F=\begin{bmatrix} 0 & 0 & 1\\ 0 & 0 & 1\\ -1 & -1 & 0\end{bmatrix}
\]
and if we take a different matrix
\[
M=\begin{bmatrix} -1 & -1 & 0\\ 1 & -1 & 0\\ 0 & 0 & 1\end{bmatrix}
\]
in the nullspace of $Z_8$ we can verify that $A_2^\top MA_1$ yields the original quadric $Q$
\[
(x,y,z,w)A_2^\top MA_1(x,y,z,w)^\top=(x,y,z,w)\begin{bmatrix}
 -1 & -1 & 0 & 1 \\
 1 & -1 & 0 & 1 \\
 0 & 0 & 1 & 0 \\
 1 & 1 & 0 & -1 \\
\end{bmatrix}(x,y,z,w)^\top=-x^2-y^2+z^2+w^2.
\]
The other two possible choices for fundamental matrices in the nullspace of $Z_8$ are
\[
F_2=\begin{bmatrix} -1 & -1 & 1\\ 1 & -1 & 1\\ -1 & -1 & 1\end{bmatrix}\quad \text{ and }\quad
F_3=\begin{bmatrix} -1 & -1 & -1\\ 1 & -1 & -1\\ 1 & 1 & 1\end{bmatrix},
\]
which have epipoles
\begin{align*}
e_x^2&=(0:1:1)\quad\quad &e_y^2=(-1:0:1)\\
e_x^3&=(0:-1:1)\quad\quad &e_y^3=(1:0:1).
\end{align*}
Moreover, we can verify that there is a unique Cremona transformation
\[
f(x_1,x_2,x_3)=(x_1^2-x_2^2+x_3^2,x_1^2+2 x_1 x_2+x_2^2-x_3^2,2 x_1 x_3)
\]
such that $f(x_i)=y_i$ for all $i$. This Cremona transformation has base points exactly matching the epipoles. Finally, we can check that each camera center lies on two real lines on the quadric $Q$, parameterized by $(a:b)\in \PP^1$ as
\begin{align*}
\ell_x^2=(a:b:b:a),\quad\ell_x^3=(a:-b:b:a),\quad \ell_y^2=(-b:a:b:a), \quad \text{ and } \quad\ell_y^3=(b:a:b:a)
\end{align*}
whose images are exactly the other two possible pairs of epipoles/base points $(e_x^2,e_y^2)$ and $(e_x^3,e_y^3)$.
\end{ex}

\section{$k=7$}\label{sec:k=7}
We now come to the case of $k=7$ point pairs. In order to understand the case of seven point pairs, 
we will first need to understand six generic point pairs $(x_i,y_i)_{i=1}^6$. In this case, the nullspace $\mathcal{N}_Z$ of the matrix $Z=(x_i^\top \otimes y_i^\top)_{i=1}^6$ is projectively a plane and $\mathcal{N}_Z \cap \mathcal{D} =:C $ is a cubic curve in $\PP(\CC^{3\times 3})$ lying in the plane $\mathcal{N}_Z$. By our genericity assumption, $C$ misses all rank-one matrices in $\mathcal{D}$ and hence every point on $C$ is a  
fundamental matrix of $(x_i,y_i)_{i=1}^6$.  
Let $\kappa_x$ and $\kappa_y$ denote the quadratic maps that 
take a rank-two matrix $M \in \PP(\CC^{3 \times 3})$ to its right and left nullvectors respectively. As a consequence of the classical theory of blowups and cubic surfaces as discussed in \cite{rankdrop1}, 
the maps $C \rightarrow \kappa_x(C) =: C_x \subset \PP^2_x$ and $C \rightarrow  \kappa_y(C) =: C_y \subset \PP^2_y$ are isomorphisms when $(x_i,y_i)_{i=1}^6$ is generic; we will go into more detail on the nature of these isomorphism in Section 4.2.1.  
\begin{equation} \label{eq:adjoint maps}
\begin{tikzcd}[column sep=small]
&C \arrow[dr, "\kappa_y"]\\
\PP^2_x \supset C_x\arrow[ur,leftarrow, "\kappa_x"]&&C_y \subset \PP^2_y
\end{tikzcd}
\end{equation}
By the composition $\kappa_y \circ \kappa_x^{-1}$, we get that $C_x$ and $C_y$ are isomorphic cubic curves. However, this isomorphism is not particularly useful; for instance, it does not take $x_i\mapsto y_i$.
By construction, the curves $C_x$ and $C_y$ consist exactly of all 
possible epipoles of the fundamental matrices of $(x_i,y_i)_{i=1}^6$ in $\PP^2_x$ and $\PP^2_y$. We therefore call $C_x$ and $C_y$ the right and left {\em epipolar curves} of 
$(x_i,y_i)_{i=1}^6$. We will see that these cubic curves are closely tied to both rank drop and the trinity relationship established in Theorem~\ref{thm:trinity correspondence (vision)}.

\begin{ex}\label{ex: k=7 example 1}
Consider the following six point pairs:
\begin{align*}
x_1=(0:0:1) \quad &y_1=(0:0:1) \quad\quad &x_2=(1:0:1)\quad &y_2=(1:0:1)\\
x_3=(0:1:1) \quad &y_3=(0:1:1) \quad\quad &x_4 =(1:1:1) \quad &y_4=(1:1:1)\\
x_5=(3:5:1) \quad &y_5=(7:-2:1) \quad\quad &x_6=(-7:11:1) \quad &y_6=(3:13:1)
\end{align*}

\begin{figure}
\fbox{%
\includegraphics[scale=0.134]{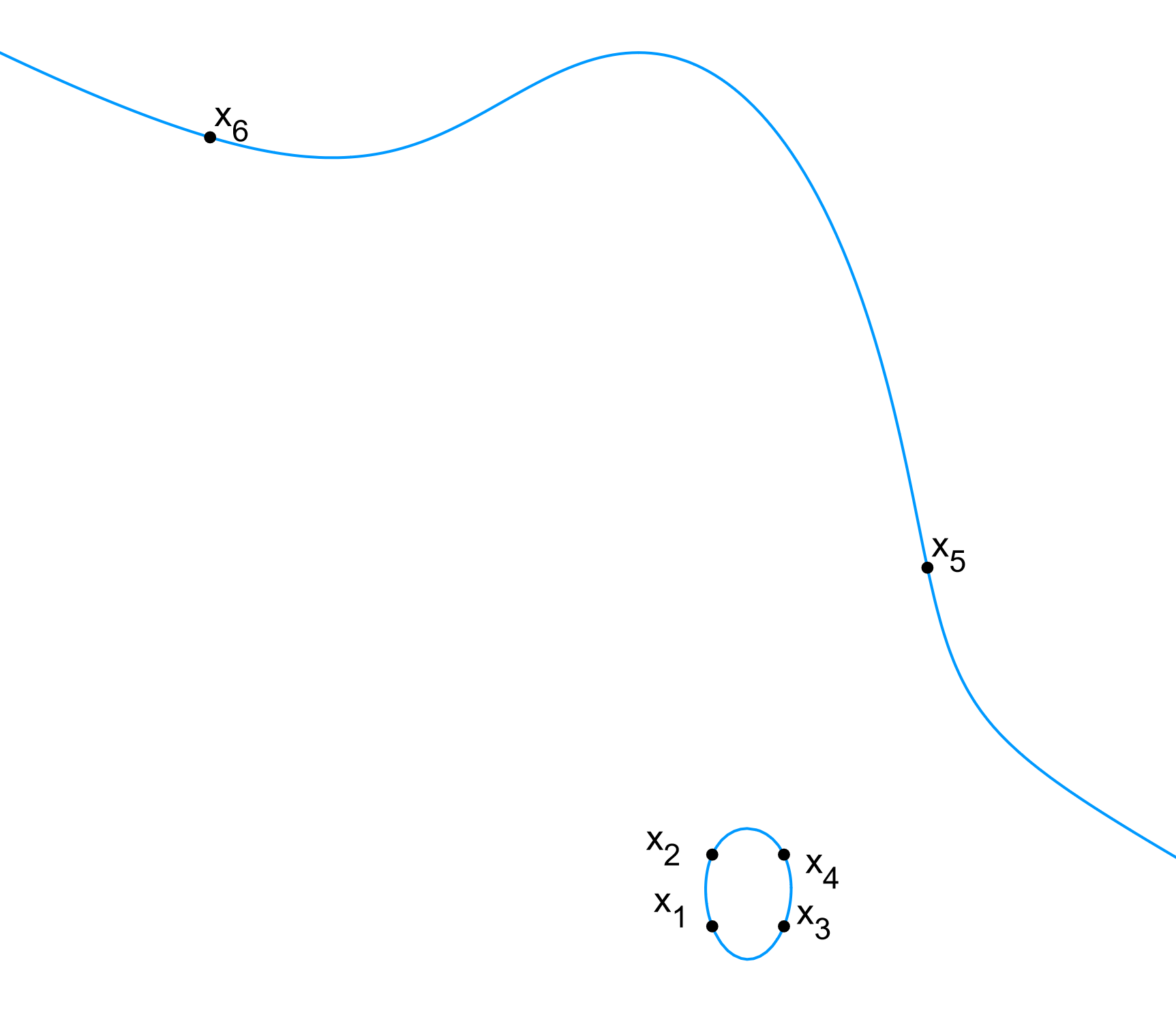}
}
\fbox{%
\includegraphics[scale=0.1382]{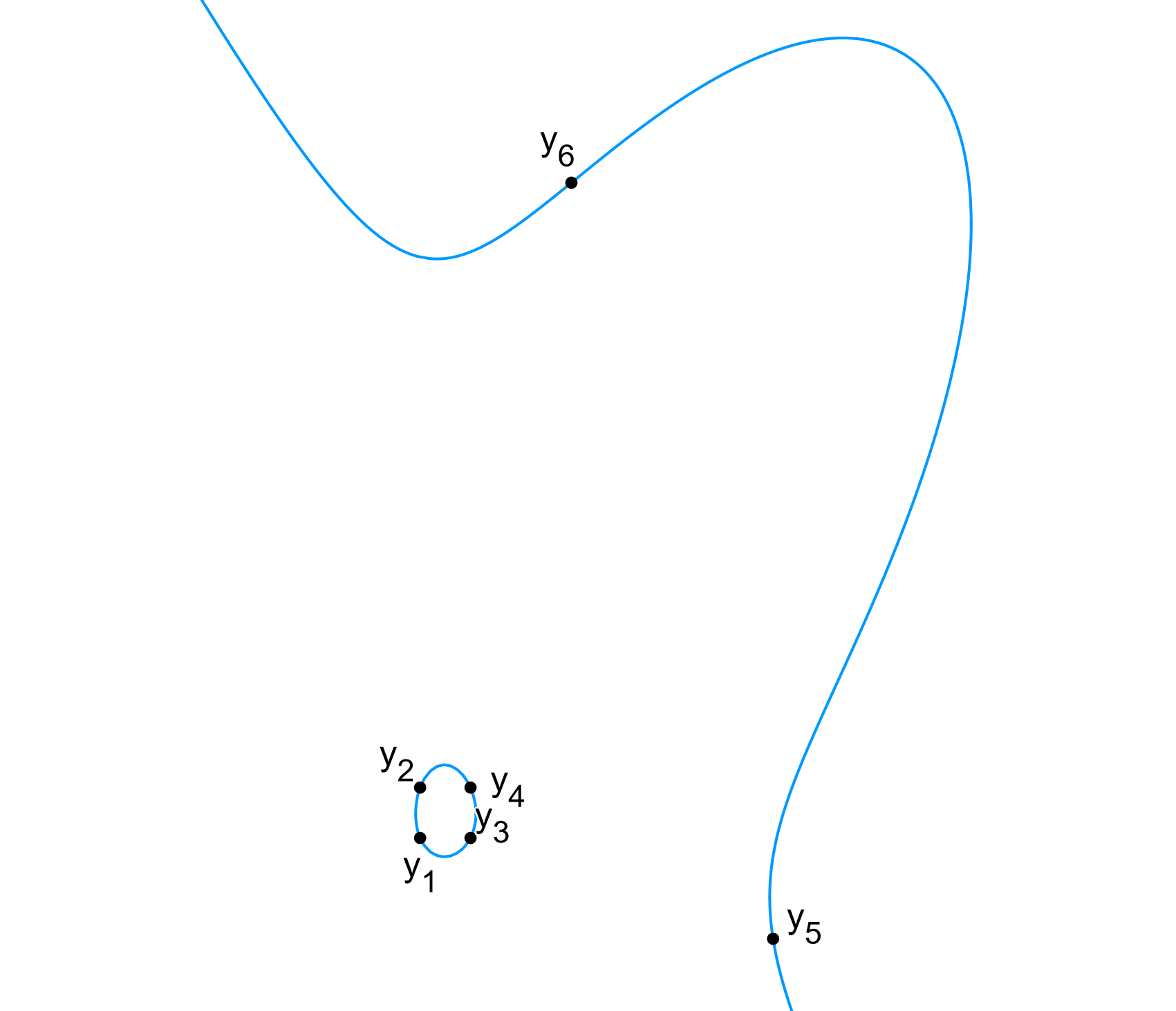}
}
\caption{The cubic curves $C_x$ and $C_y$ from Example~\ref{ex: k=7 example 1}, with $x_i$ and $y_i$ labeled. \label{fig:cubic curves}}
\end{figure}
Figure~\ref{fig:cubic curves} shows the curves $C_x$ and $C_y$. Observe that $x_i\in C_x$ and $y_i\in C_y$ for all $i=1,\ldots,6$, a fact we will prove in Section~\ref{subsec:rank drop and cubic curves}. 
The curves $C_x$ and $C_y$ are cut out by $g_x(u) =0$ and $g_y(v)=0$ in $\PP^2_x$ and $\PP^2_y$ where

$$\begin{array}{ll}
g_x(u) = & 447u_1^3+775u_1^2u_2+113u_1u_2^2+118u_2^3-4083u_1^2u_3-888u_1u_2u_3\\&-1521u_2^2u_3+3636u_1u_3^2+1403u_2u_3^2, \textup{ and }\\
& \\
g_y(v) = & 447v_1^3-136v_1^2v_2-12v_1v_2^2+118v_2^3-3608v_1^2v_3+148v_1v_2v_3\\
& -1478v_2^2v_3+3161v_1v_3^2+1360v_2v_3^2.
\end{array}$$

In Section~\ref{subsec:Cremona hexahedral} we use classical invariant theory to 
derive the polynomials $g_x$ and $g_y$. 
\qed
\end{ex}

Given seven point pairs $(x_i,y_i)_{i=1}^7$, denote the epipolar  curves obtained by excluding the $i$th point pair as $C_x^{\hat i}$ and $C_y^{\hat i}$. In the event that these curves are equal for all choices of $i$, we denote 
$C_x:=C_x^{\hat 1}=\ldots=C_x^{\hat 7}$ and $C_y:=C_y^{\hat 1}=\ldots=C_y^{\hat 7}$. We will see that this equality is 
necessary (Theorem~\ref{thm:k=7 simple}) and sufficient (Theorem~\ref{thm: k=7 central theorem}) for $Z_7 = (x_i^\top \otimes y_i^\top)_{i=1}^7$ to be rank deficient.

The maps $\kappa_x, \kappa_y$ are not the only way to derive the epipolar curves $C_x, C_y$;  it is also possible to obtain them 
via the trinity correspondence \eqref{eq:trinity (vision)}. This will be the subject of Section 4.1 and will allow us to prove the following result:

\begin{thm}\label{thm:k=7 simple}
For $(x_i,y_i)_{i=1}^7$ semi-generic point pairs, $Z_7$ is rank deficient if and only if there exists cubic curves $C_1$ through $x_1,\ldots,x_7$ and $C_2$ through $y_1,\ldots,y_7$ as well as an isomorphism $f:C_1\to C_2$ such that $x_i\mapsto y_i$. Moreover, if this holds then $C_1=C_x$ and $C_2=C_y$.
\end{thm}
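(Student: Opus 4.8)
The plan is to prove both implications through the trinity correspondence \eqref{eq:trinity (vision)}, using that rank deficiency of $Z_7$ forces the six-point nullspaces to coincide in a single plane.

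\textbf{Reducing to a common plane.} First I would record that, for semi-generic pairs, $Z_7$ is rank deficient if and only if the vector-space nullspace $\mathcal{N}_Z$ has dimension at least three. Since each six-point submatrix $Z_6^{\hat i}$ has a three-dimensional nullspace $\mathcal{N}_{Z_6^{\hat i}}$ containing $\mathcal{N}_Z$, rank deficiency forces $\mathcal{N}_Z = \mathcal{N}_{Z_6^{\hat 1}} = \cdots = \mathcal{N}_{Z_6^{\hat 7}} =: \Pi$, a single plane. Consequently the cubic $\Pi \cap \mathcal{D}$ and the epipolar curves $C_x^{\hat i} = \kappa_x(\Pi \cap \mathcal{D})$, $C_y^{\hat i} = \kappa_y(\Pi \cap \mathcal{D})$ are independent of $i$; I write them as $C_x, C_y$. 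Using the fact (from Section~\ref{subsec:rank drop and cubic curves}) that the six points defining an epipolar curve lie on it, each $x_j$ lies on $C_x^{\hat i} = C_x$ for any $i \neq j$, so all seven $x_i$ lie on $C_x$ and all seven $y_i$ lie on $C_y$.

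\textbf{Forward direction.} With $\Pi$ in hand, I would pick a generic line $\ell \subseteq \Pi$; a generic such line is $P$-generic, so by Theorem~\ref{thm:trinity correspondence (vision)} it produces a Cremona transformation $f$ with $f(x_i) = y_i$ non-degenerately for all seven $i$, and whose base points are the epipoles of the three rank-two matrices of $\ell \subseteq \Pi \cap \mathcal{D}$, hence lie on $C_x$ (domain) and $C_y$ (codomain). By Lemma~\ref{lem:Cremona transformations induce isomorphisms on cubics}, $\bar f : C_x \to f(C_x)$ is an isomorphism of smooth cubics. It then remains to identify $f(C_x)$ with $C_y$: both are cubics, and they share the seven image points $y_i = f(x_i)$ together with the three codomain base points $e_j^y$ (which lie on $f(C_x)$ by Lemma~\ref{lem:base points lie on cubic} and on $C_y$ as epipoles in $\Pi$). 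These ten points are distinct by semi-genericity and $P$-genericity, so B\'ezout forces $f(C_x) = C_y$. Thus $\bar f : C_x \to C_y$ is an isomorphism with $x_i \mapsto y_i$, and we may take $C_1 = C_x$, $C_2 = C_y$.

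\textbf{Reverse direction.} Conversely, given cubics $C_1 \ni x_i$, $C_2 \ni y_i$ and an isomorphism $f : C_1 \to C_2$ with $f(x_i) = y_i$, Lemma~\ref{lem:isomorphisms are cremona} extends $f$ to a genuine two-parameter family of Cremona transformations $f'_\sigma$ with $f'_\sigma|_{C_1} = f$. By Lemma~\ref{lem:lineToCremona} each $f'_\sigma$ determines a generic line $\ell_\sigma \subset \PP(\CC^{3\times 3})$ with $y^\top M x = 0$ for all $M \in \ell_\sigma$ on the graph of $f'_\sigma$; since $(x_i, y_i)$ lies on this graph, $\ell_\sigma \subseteq \mathcal{N}_Z$. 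Because distinct Cremona transformations correspond to distinct lines (the $1{:}1$ part of Lemma~\ref{lem:lineToCremona}) and the family is genuinely two-dimensional, two distinct values of $\sigma$ give two distinct lines inside $\mathcal{N}_Z$, forcing $\dim \mathcal{N}_Z \geq 3$ and hence rank deficiency of $Z_7$.

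\textbf{The ``moreover'' and the main obstacle.} Finally, to show $C_1 = C_x$ and $C_2 = C_y$, I would exploit that in Lemma~\ref{lem:isomorphisms are cremona} the first two base points of $f'_\sigma$ may be chosen arbitrarily on $C_1$. As $\sigma$ varies these base points sweep out a dense subset of $C_1$; but each $\ell_\sigma \subseteq \mathcal{N}_Z = \Pi$ is generic, so by Lemma~\ref{lem:lines have well-defined cremona transformations} its base points are epipoles of rank-two matrices of $\Pi$ and therefore lie on $C_x$. Hence $C_1 \subseteq C_x$, and since both are cubics with $C_x$ irreducible, $C_1 = C_x$; symmetrically $C_2 = C_y$. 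I expect the main obstacle to be the bookkeeping of genericity: verifying that a generic line in $\Pi$ is $P$-generic, that the ten intersection points of $f(C_x)$ and $C_y$ are genuinely distinct, and that the swept base points are dense in $C_1$ while avoiding the degenerate members of the family. The geometric core --- the B\'ezout identification $f(C_x) = C_y$ and the production of two independent nullspace lines --- is otherwise direct.
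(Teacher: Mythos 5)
Your proposal is correct and follows essentially the same route as the paper: semi-genericity collapses the seven six-point nullspaces to a single plane, the trinity correspondence (Theorem~\ref{thm:trinity correspondence (vision)} together with Lemma~\ref{lem:k=7 cubic curves characteristics}) produces the Cremona family and its restriction to the epipolar cubics, and Lemma~\ref{lem:isomorphisms are cremona} drives the converse via two distinct lines inside $\mathcal{N}_Z$. The only local deviations are that the paper identifies the image of $C_x$ with $C_y$ directly through the kernel formula of Lemma~\ref{lem:PlaneToCremonaCubic} (the restriction of every $f_\ell$ sends $x\in C_x$ to the left kernel of $\begin{pmatrix} M_1x & M_2x & M_3x\end{pmatrix}$, which lies on $C_y$ by definition) rather than by your ten-point B\'ezout argument, and it proves the ``moreover'' by the opposite inclusion $C_x\subseteq C_1$ (every epipole is a base point of some member of the family, hence lies on $C_1$ by Lemma~\ref{lem:base points lie on cubic}) rather than your $C_1\subseteq C_x$; both variants close the same way since the curves involved are cubics.
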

This is the first of the two main results in this section and is the more geometric theorem; we will prove it at the end of Section 4.1. In Section 4.2.1 we use the theory of cubic surfaces as in \cite{rankdrop1} to obtain explicit equations for the epipolar curves. In Section 4.2.2 we use these explicit equations to characterize rank deficiency of $Z_7$ using $14$ algebraic equations and prove our second main result, Theorem~\ref{thm: k=7 central theorem}, which is the more algebraic theorem. Finally, in Section 4.3 we collect some further results outside the assumption of semi-genericity. 

\subsection{Rank Drop and Cubic Curves} \label{subsec:rank drop and cubic curves}
Before addressing the case of six generic point pairs and seven semi-generic point pairs, 
we establish an analogue of Lemma~\ref{lem:lineToCremona}
to show how general projective planes in $ \PP(\CC^{3\times 3})$ give rise to Cremona transformations of cubic curves.

\begin{lem}\label{lem:PlaneToCremonaCubic}
Let $\mathcal{P} \subset \PP(\CC^{3\times 3})$ be a projective plane not containing any rank-one matrix. 
The set of points $(x,y)\in \PP^2\times \PP^2$ satisfying $y^TMx=0$ for all $M\in \mathcal{P}$ coincides with the 
 closure of the graph $\{(x,f(x)):x\in C_x^{\mathcal{P}}\}$ of the restriction of a Cremona transformation $f:\PP^2\dashrightarrow \PP^2$ 
to a cubic curve $C_x^{\mathcal{P}}$.  Moreover there is a two-dimensional family of Cremona transformations 
$f_{\ell}:\PP^2\dashrightarrow \PP^2$, indexed by generic lines $\ell \subset \mathcal{P}$ as in Lemma~\ref{lem:lineToCremona}, with the same restriction to $C_x^{\mathcal{P}}$. 
\end{lem}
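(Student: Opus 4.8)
The plan is to mirror the structure of Lemma~\ref{lem:lineToCremona}, but for a plane $\mathcal{P}$ rather than a line. The key observation is that a generic line $\ell \subset \mathcal{P}$ already carries a Cremona transformation $f_\ell$ via Lemma~\ref{lem:lineToCremona}, and the graph of $f_\ell$ is exactly the set $\{(x,y) : y^\top M x = 0 \text{ for all } M \in \ell\}$. Since $\mathcal{P} \supset \ell$, imposing the additional conditions $y^\top M x = 0$ for all $M \in \mathcal{P}$ (equivalently, for one extra matrix $N \in \mathcal{P} \setminus \ell$) cuts out a \emph{subvariety} of this graph. So I would first fix one generic line $\ell \subset \mathcal{P}$ and write $\mathcal{P} = \mathrm{span}\{\ell, N\}$; then the solution set in question is $\{(x, f_\ell(x)) : x \in \PP^2\} \cap \{(x,y) : y^\top N x = 0\}$.

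\emph{First} I would show that this intersection is, generically, a curve. Substituting $y = f_\ell(x)$ (valid off the base points, where $f_\ell$ is given by quadratics as in Remark~\ref{rem:lineToCremonaEq}) into the bilinear form $y^\top N x = 0$ yields a single cubic equation $f_\ell(x)^\top N x = 0$ in $x$, since $f_\ell(x)$ is quadratic and $x$ is linear. This cubic cuts out a plane cubic curve $C_x^{\mathcal P} \subset \PP^2_x$, and the solution set is the closure of the graph of $f_\ell$ restricted to $C_x^{\mathcal P}$. That gives the first assertion, with $f := f_\ell$. \emph{The one thing to verify} is that this cubic is not identically zero and does not degenerate, i.e.\ that $\mathcal{P}$ carrying no rank-one matrix forces the associated conditions to genuinely cut down dimension; this should follow because the span condition $y^\top M x = 0$ for all $M \in \mathcal P$ is the orthogonal complement setup, and a rank-one matrix $yx^\top$ lies in $\mathcal P^\perp$ exactly when $(x,y)$ satisfies all the equations, so the absence of rank-one matrices in $\mathcal P$ is exactly what is needed.

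\emph{Second}, for the ``moreover'' clause, I would argue that the cubic curve $C_x^{\mathcal P}$ depends only on $\mathcal P$, not on the choice of generic line $\ell \subset \mathcal P$, and that every generic line $\ell \subset \mathcal P$ induces the \emph{same} restriction to $C_x^{\mathcal P}$. The point is that the defining condition of the solution set, $y^\top M x = 0$ for all $M \in \mathcal P$, is manifestly independent of how we decompose $\mathcal P$ into a line plus an extra direction. Hence for any generic $\ell \subset \mathcal P$, the graph of $f_\ell$ contains this common solution set, so $f_\ell|_{C_x^{\mathcal P}}$ agrees for all such $\ell$. The family of generic lines through $\mathcal P$ is parametrized by the dual $\PP^2$ of $\mathcal P$ (a two-dimensional family), which gives the stated two-parameter family of Cremona transformations $f_\ell$ with a common restriction to the cubic.

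\emph{The main obstacle} I expect is making the dimension count and the ``same restriction'' claim fully rigorous rather than heuristic: one must check that $C_x^{\mathcal P}$ is genuinely a cubic curve (degree exactly three, reduced) under the stated hypothesis, and that distinct generic lines $\ell, \ell' \subset \mathcal P$ really restrict to the identical map on $C_x^{\mathcal P}$ rather than merely to two maps whose graphs share this curve. The cleanest way to handle the latter is to note that both graphs contain the $\mathcal P$-determined solution set, which is already a full cubic's worth of points in each graph, and since each $f_\ell$ is a well-defined function, two functions agreeing on a dense subset of a curve agree on the whole curve; this sidesteps any delicate comparison of the quadratic formulas. Connecting $C_x^{\mathcal P}$ to the epipolar curves $C_x$ from the six-point discussion can then be deferred to the subsequent results, keeping this lemma self-contained.
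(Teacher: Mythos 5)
Your proposal is correct and follows essentially the same route as the paper's proof: fix a generic line $\ell \subset \mathcal{P}$, invoke Lemma~\ref{lem:lineToCremona} and Remark~\ref{rem:lineToCremonaEq}, and observe that the single extra bilinear condition cuts the graph of $f_\ell$ down to a cubic curve whose defining polynomial and fibers are intrinsic to $\mathcal{P}$. The only cosmetic difference is that the paper writes the cubic directly as $\det\begin{pmatrix} M_1x & M_2x & M_3x\end{pmatrix}=0$ and gets independence of $\ell$ by identifying $f_\ell(x)$, for $x$ on the cubic away from the base points, with the left kernel of that rank-two matrix, whereas you produce the same cubic by substitution (the same polynomial, up to sign, via the scalar triple product) and deduce independence from the fact that every $\overline{\{(x,f_\ell(x))\}}$ contains the common solution set and is single-valued over non-base points.
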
 
\begin{proof}
The curve $C_x^{\mathcal{P}}$ consists of the set of points $x \in \PP^2$ for which there exists an $M\in \mathcal{P}$ with $Mx = 0$. 
When $\mathcal{P} = \mathcal{N}_Z$, this is the epipolar curve $C_x$ described above. By choosing a basis $\{M_1, M_2, M_3\}$ for $\mathcal{P}$ we can write any $M\in \mathcal{P}$ as $aM_1+bM_2+cM_3$. 
Given $x\in\PP^2$ there exists $(a:b:c) \in \PP^2$ with $(aM_1+bM_2+cM_3)x=0$ if and only if  $\det\begin{pmatrix} M_1x & M_2x & M_3x \end{pmatrix}=0$.  Therefore $C_x^{\mathcal{P}}$ is defined by the vanishing of this determinant, which is a cubic form in $x_1, x_2, x_3$. Symmetrically the cubic curve $C_y^{\mathcal{P}}$ defined by the vanishing of the determinant of the matrix with rows $y^\top M_j$  
coincides with $C_y$ when $\mathcal{P} = \mathcal{N}_Z$.

Let $\ell = {\rm span}\{M_1,M_2\} \subset \mathcal{P} \subset \PP(\CC^{3\times 3})$ be a generic line. 
By Lemma~\ref{lem:lineToCremona}, there is a Cremona transformation $f_{\ell}:\PP^2\dashrightarrow \PP^2$ whose graph is the set of 
points $(x,y)\in \PP^2\times \PP^2$ satisfying $y^TMx=0$ for all $M\in \ell$.  
As in Remark~\ref{rem:lineToCremonaEq}, the map $f_{\ell}$ is given by $x\mapstochar\dashrightarrow \ker\begin{pmatrix} M_1x & M_2x\end{pmatrix}$. 
For $x\in C_x^{\mathcal{P}}$ except the three base points of $f_{\ell}$, the left kernel of $\begin{pmatrix} M_1x & M_2x\end{pmatrix}$ is also the left kernel of the rank-two $3\times 3$ matrix $\begin{pmatrix} M_1x & M_2x & M_3x\end{pmatrix}$, which is independent of the choice of $\ell = {\rm span}\{M_1, M_2\}\subset \mathcal{P}$. 

Note that the graph $\{(x,f_{\ell}(x)):x\in C_x^{\mathcal{P}}\}$ and the set of points $(x,y)\in \PP_x^2\times \PP_y^2$ satisfying $y^TMx=0$ for all $M\in \mathcal{P}$ have the same projection onto $\PP_x^2$, namely $C_x^{\mathcal{P}}$. 
For any $x\in C_x^{\mathcal{P}}$, the corresponding point $y$ is given by $f_{\ell}(x) = \ker\begin{pmatrix} M_1x & M_2x & M_3x\end{pmatrix}$. 
 \end{proof}

\subsubsection{Six point pairs}
Let $(x_i,y_i)_{i=1}^6$ be a set of six generic point pairs, $Z= (x_i,y_i)_{i=1}^6$ and $F$ be any choice of fundamental matrix (i.e., a rank-two matrix on the projective plane $\mathcal{N}_{Z}$). Genericity 
guarantees a reconstruction $p_1,\ldots,p_6,c_1,c_2 \in \PP^3$, of $(x_i,y_i)_{i=1}^6$ from $F$. Recall that $c_1, c_2$ are the centers of camera projections $\pi_1, \pi_2$ and $p_1, \ldots, p_6$ are world points such that $\pi_1(p_j) = x_j$ and $\pi_2(p_j) = y_j$ for all $j=1, \ldots, 6$. 

Since $\mathcal{N}_Z$ is a two-dimensional plane, it contains a pencil of lines through $F$ (see \eqref{eq:F trinity} and \eqref{eq:trinity (vision)}) which corresponds to a pencil of quadrics $Q_\lambda$, each passing through the reconstruction. The intersection of these quadrics, also obtainable as the intersection of any two distinct quadrics in the pencil, is a quartic space curve $W \subset \PP^3$ that must also pass through the reconstruction. Since 
$c_1,c_2$ are on $W$, $\pi_1(W)\subset\PP^2_x$ and $\pi_2(W)\subset\PP^2_y$ are cubic curves. We will see that these 
cubic curves are independent of the choice of $F$, and that they are exactly the epipolar curves $C_x$ and $C_y$. We will use this derivation to study their special properties arising from the trinity relationship. The following lemma assumes the set up just described. 

\begin{lem}\label{lem:k=7 cubic curves characteristics}For six generic point pairs $(x_i,y_i)_{i=1}^6$, we have the following:
\begin{enumerate} 
\item The cubic curves $\pi_1(W)$ and $\pi_2(W)$ are the right and left epipolar curves $C_x,C_y$, respectively; In particular, they are independent of the choice of $F$; 
\item The points $x_i$ lie on $C_x$ and $y_i$ lie on $C_y$ for $i=1,\ldots,6$.
\item There exists a two-parameter family of Cremona transformations $f_{\ell}:\PP^2_x\dashrightarrow\PP^2_y$, indexed by  lines $\ell$ in the projective plane $\mathcal{N}_Z$, such that the following holds: 
\begin{itemize}
    \item $f_{\ell}(x_i)= y_i$ for all $i=1,\ldots,6$,
    \item the restriction of $f_{\ell}$ to a map $C_x\to C_y$ is independent of $\ell$, and
    \item the base points of all the Cremona transformations $f_\ell$ lie in $C_x,C_y$.
\end{itemize}
\end{enumerate}
\end{lem}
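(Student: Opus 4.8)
The plan is to leverage the setup already in place: the plane $\mathcal{N}_Z$, the reconstruction from a chosen fundamental matrix $F$, the pencil of quadrics $Q_\lambda$, and the quartic space curve $W = \bigcap_\lambda Q_\lambda$. I would prove the three claims in the order (3), (2), (1), because the Cremona family gives the cleanest handle on the geometry and the other two claims follow as consequences.

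\textbf{Step 1 (establishing the Cremona family, part (3)).} I would apply Lemma~\ref{lem:PlaneToCremonaCubic} directly to $\mathcal{P} = \mathcal{N}_Z$, which is a projective plane containing no rank-one matrix by the genericity assumption. That lemma immediately supplies the two-parameter family $f_\ell$ indexed by generic lines $\ell \subset \mathcal{N}_Z$, together with the assertion that the restriction $f_\ell|_{C_x^{\mathcal{N}_Z}} : C_x \to C_y$ is independent of $\ell$, since it is computed by the single cubic-determinant formula $y = \ker\begin{pmatrix} M_1 x & M_2 x & M_3 x \end{pmatrix}$. To see that $f_\ell(x_i) = y_i$, note that for each $i$ every $M \in \mathcal{N}_Z$ satisfies $y_i^\top M x_i = 0$; hence the point $y_i$ lies in the left kernel of $\begin{pmatrix} M_1 x_i & M_2 x_i & M_3 x_i \end{pmatrix}$, which is exactly $f_\ell(x_i)$ once we know $x_i \in C_x$ (this is part (2), proved next). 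That the base points of each $f_\ell$ lie on the cubics is precisely the content of Lemma~\ref{lem:base points lie on cubic} combined with Lemma~\ref{lem:lines have well-defined cremona transformations}, since the base points of $f_\ell$ are the epipoles of the three rank-two matrices on $\ell$, and these epipoles lie on $C_x, C_y$ by definition of the epipolar curves.

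\textbf{Step 2 (the points lie on the cubics, part (2)).} For each $i$, the reconstructed world point $p_i$ lies on every quadric $Q_\lambda$ by the argument already used in \eqref{eq:lineToQuadricPoints}: for any $M \in \mathcal{N}_Z$ one has $\pi_2(p_i)^\top M \pi_1(p_i) = y_i^\top M x_i = 0$. Therefore $p_i \in W$, and consequently $x_i = \pi_1(p_i) \in \pi_1(W)$ and $y_i = \pi_2(p_i) \in \pi_2(W)$. Once part (1) identifies $\pi_1(W) = C_x$ and $\pi_2(W) = C_y$, this gives $x_i \in C_x$ and $y_i \in C_y$. Alternatively, and more directly, $x_i \in C_x$ because $F x_i \ne 0$ is false in the rank-drop regime is not needed here; instead $x_i \in C_x^{\mathcal{N}_Z}$ because the matrix $\begin{pmatrix} M_1 x_i & M_2 x_i & M_3 x_i \end{pmatrix}$ has the nonzero left kernel $y_i$ and hence vanishing determinant, which is exactly the defining cubic of $C_x$ from Lemma~\ref{lem:PlaneToCremonaCubic}. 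I would prefer this second argument as it is self-contained and does not presuppose part (1).

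\textbf{Step 3 (identifying $\pi_1(W), \pi_2(W)$ with the epipolar curves and independence of $F$, part (1)).} Here is where I expect the main obstacle. I would first argue that $\pi_1(W) \subseteq C_x$: a generic point $x \in \pi_1(W)$ is $\pi_1(p)$ for some $p \in W$, and a line $\ell \subset \mathcal{N}_Z$ through $F$ whose associated quadric $Q$ contains $p$ yields, via Lemma~\ref{lem:lineToCremona} and Lemma~\ref{lem:QFtoCremona}, a Cremona transformation and hence (through the determinant formula of Lemma~\ref{lem:PlaneToCremonaCubic}) the vanishing of the cubic defining $C_x$ at $x$. Conversely $C_x \subseteq \pi_1(W)$ by a dimension count: both are irreducible cubic curves (the defining determinant is irreducible by genericity), so an inclusion of one cubic curve in another forces equality. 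The independence from $F$ then follows because $C_x = C_x^{\mathcal{N}_Z}$ is manifestly determined by the plane $\mathcal{N}_Z$ alone through its determinantal equation, with no reference to $F$; the curve $\pi_1(W)$, a priori depending on $F$ through the reconstruction, is thereby pinned to the $F$-independent object $C_x$. The delicate point to handle carefully is verifying irreducibility of these cubic determinants and ruling out that $\pi_1(W)$ degenerates (e.g. that $W$ does not contain the line $\overline{c_1 c_2}$, which is guaranteed by permissibility of the quadrics), so that the projection $\pi_1|_W$ is genuinely birational onto a cubic rather than dropping degree; I would dispatch this using the permissibility hypothesis and the genericity of the six point pairs.
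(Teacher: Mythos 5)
Your proposal is correct and follows essentially the same route as the paper's own proof: both rest on Lemma~\ref{lem:PlaneToCremonaCubic} applied to $\mathcal{P}=\mathcal{N}_Z$, on projecting the quartic $W$ from the center $c_1\in W$ and using a degree count to identify $\pi_1(W)$ with the determinantal cubic $C_x$ (whence independence of $F$), and on Lemma~\ref{lem:lines have well-defined cremona transformations} to place the base points, i.e.\ the epipoles of the rank-two matrices on $\ell$, on $C_x$ and $C_y$. The only departures are cosmetic: you reorder the parts and verify $f_\ell(x_i)=y_i$ by the direct left-kernel computation on $\begin{pmatrix} M_1x_i & M_2x_i & M_3x_i\end{pmatrix}$ rather than citing Theorem~\ref{thm:trinity correspondence (vision)}, a harmless and slightly more self-contained substitution.
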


\begin{proof}
Let $F$ be a fundamental matrix in $\mathcal{N}_Z$.  Since $(x_i,y_i)_{i=1}^6$ is generic, $F$ can be any element of the cubic curve $C = \mathcal{N}_Z\cap\mathcal{D}$, and we can use $F$ to obtain a reconstruction consisting of world points $p_1, \hdots, p_6$ and cameras corresponding to linear projections $\pi_1, \pi_2: \PP^3\dashrightarrow \PP^2$.

The quartic space curve $W$ is defined by quadrics of the form 
$q(u) = \pi_2(u)^\top M\pi_1(u)$ where $M\in \mathcal{P}\cap F^{\perp}$.
Therefore $\pi_1(W)$ contains the cubic plane curve $C_x$ defined by $\{x\in \PP^2: \exists M\in \mathcal{N}_Z \text{ s.t. } Mx=0\}$. 
Since $c_1\in W$, $\pi_1(W)$ is a cubic plane curve and so these must be equal. 
A symmetric argument shows that $\pi_2(W) = C_y$. 
Since $W$ contains each point $p_i$, this also implies that $x_i = \pi_1(p_i)$ belongs to $C_x$ and $y_i = \pi_2(p_i)$ belongs to $C_y$
for all $i=1, \hdots, 6$. 

By Lemma~\ref{lem:PlaneToCremonaCubic}, for any generic line $\ell\subset \mathcal{N}_Z$, the restriction of the Cremona transformation 
$f_{\ell}:\PP^2\dashrightarrow \PP^2$ to the cubic $C_x$ is independent of the choice of $\ell$. 
By Theorem~\ref{thm:trinity correspondence (vision)}, $f_{\ell}(x_i) = y_i$ for all $i$. 
As in Lemma~\ref{lem:lines have well-defined cremona transformations}, the base points of $f_{\ell}$ are the right kernels of the three 
rank-two matrices $F_1, F_2, F_3\in \ell$ and therefore belong to $C_x$. Similarly, the base points of $f_{\ell}^{-1}$ 
are the left kernels of these matrices and so belong to $C_y$. 
\end{proof}

\begin{rem}
Given a rank two matrix $F\in\mathcal{N}_Z$, it may be the case that $Fx_i=0$ (or $y_i^\top F=0)$ for some $i$. However, even in this case we can still apply the trinity \eqref{eq:trinity} to obtain a pencil of quadrics (and a pencil of Cremona transformations), and from them the cubic curves $C_x,C_y$ with the isomorphism between them. Therefore, even if $\ell$ is such that $x_i$ is a base point of $f_\ell$, the restriction of $f_\ell$ to a map $C_x\to C_y$, as in Lemma~\ref{lem:Cremona transformations induce isomorphisms on cubics}, would still satisfy $x_i\mapsto y_i$
\end{rem}

\subsubsection{From six points to seven}
The trinity correspondence has allowed us to prove a number of properties of the epipolar curves corresponding to six generic point pairs. In particular, we know that there is an isomorphism $f:C_x\to C_y$ that sends $x_i\mapsto y_i$ for all $i=1,\ldots,6$, which is induced by a two-parameter family of Cremona transformations $\PP^2_x\dashrightarrow\PP^2_y$. For seven generic point pairs, the following corollary holds.

\begin{lem}\label{cor:k=7 cremona transformations result}
Let $(x_i,y_i)_{i=1}^7$ be seven semi-generic point pairs. Then the rank of $Z = (x_i^\top \otimes y_i^\top)_{i=1}^7$ drops if and only if there are cubic curves $C_1,C_2$, through $x_1,\ldots,x_7$ and $y_1\ldots,y_7$ respectively, as well as a two-parameter family of Cremona transformations $f_{\ell}:\PP^2_x\dashrightarrow\PP^2_y$ such that $f(x_i)= y_i$ for all $i$ and the family is well-defined on the restriction $C_1\to C_2$. Furthermore, if this holds then $C_1=C_x$ and $C_2=C_y$.
\end{lem}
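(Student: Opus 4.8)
The plan is to prove the biconditional in Lemma~\ref{cor:k=7 cremona transformations result} by reducing the seven-point case to the six-point analysis carried out in Lemma~\ref{lem:k=7 cubic curves characteristics}, exploiting the semi-genericity hypothesis. The key observation is that for seven semi-generic point pairs, $Z = Z_7$ drops rank precisely when its $7\times 9$ matrix has a nontrivial nullspace, i.e. when $\dim\mathcal{N}_Z \geq 3$ (since generically the nullspace of a $7\times 9$ matrix of rank $7$ is only $2$-dimensional, rank drop means $\mathcal{N}_Z$ contains a projective plane $\mathcal{P}\subset\PP(\CC^{3\times 3})$). This plane $\mathcal{P}$ is exactly the kind of object handled by Lemma~\ref{lem:PlaneToCremonaCubic}.

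For the forward direction, I would first argue that semi-genericity forces $\mathcal{N}_Z$ to contain no rank-one matrix: each $6$-element subset $(x_i,y_i)_{i\neq j}$ is generic, so by the six-point discussion its nullspace is a plane meeting $\mathcal{D}$ in a cubic missing all rank-one matrices, and $\mathcal{N}_Z$ sits inside each such plane. Then Lemma~\ref{lem:PlaneToCremonaCubic} applies directly to $\mathcal{P} = \mathcal{N}_Z$: it produces cubic curves $C_x^{\mathcal{P}}, C_y^{\mathcal{P}}$ and a two-parameter family of Cremona transformations $f_\ell$, indexed by generic lines $\ell\subset\mathcal{P}$, whose common restriction gives an isomorphism $C_x^{\mathcal{P}}\to C_y^{\mathcal{P}}$. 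I must then show each $x_i\in C_x^{\mathcal{P}}$ (and $y_i\in C_y^{\mathcal{P}}$) with $f_\ell(x_i)=y_i$: since $y_i^\top M x_i = 0$ for every $M\in\mathcal{N}_Z$, the point $x_i$ lies in the right-nullspace locus cutting out $C_x^{\mathcal{P}}$, and the characterization of $y$ from $x$ in Lemma~\ref{lem:PlaneToCremonaCubic} as the kernel of $\begin{pmatrix} M_1x & M_2x & M_3x\end{pmatrix}$ forces $y_i = f_\ell(x_i)$. Setting $C_1 = C_x^{\mathcal{P}}$ and $C_2 = C_y^{\mathcal{P}}$ gives the desired data.

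For the reverse direction, suppose such curves $C_1, C_2$ and a two-parameter family $f_\ell$ exist with $f_\ell(x_i)=y_i$ for all $i$, well-defined on $C_1\to C_2$. Each $f_\ell$ corresponds via Lemma~\ref{lem:lineToCremona} to a generic line $\ell_{f}\subset\PP(\CC^{3\times 3})$ with $y_i^\top M x_i = 0$ for all $M\in\ell_f$; since the graphs of the $f_\ell$ agree on $C_1$ and pass through all seven pairs, the union $\bigcup_\ell \ell_f$ spans (at least) a plane $\mathcal{P}\subseteq\mathcal{N}_Z$, forcing $\dim\mathcal{N}_Z\geq 3$ and hence rank drop. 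I expect the identification $C_1 = C_x$, $C_2=C_y$ to follow because seven points determine a cubic uniquely among the pencil of cubics through six of them only under genericity — this is where I would invoke that the claimed curves must coincide with the epipolar curves obtained from any six-point subconfiguration.

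The main obstacle will be the final uniqueness claim $C_1=C_x$, $C_2=C_y$ and the precise bookkeeping that a genuine \emph{two-parameter} family (rather than a single Cremona transformation) arises exactly when the nullspace is a full plane. The delicate point is ensuring that the two-parameter family promised by Lemma~\ref{lem:isomorphisms are cremona} (which exists for \emph{any} isomorphism of cubics) is compatible with passing through all seven point pairs simultaneously; semi-genericity must be used to rule out the degenerate possibility that the seventh pair forces the family to collapse or that $C_1, C_2$ fail to be the epipolar curves $C_x, C_y$ attached to each six-point subset. I would handle this by showing the restriction $f|_{C_1}\colon C_1\to C_2$ is forced to agree with the restriction of the epipolar isomorphism of Lemma~\ref{lem:k=7 cubic curves characteristics} for every six-point subconfiguration, and that the only way all seven of these agree is for the curves themselves to coincide.
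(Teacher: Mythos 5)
Your forward direction and the rank-drop half of your reverse direction are essentially the paper's own argument. Under semi-genericity, rank deficiency makes $\mathcal{N}_Z$ coincide with each plane $\mathcal{N}_{Z_{\hat{j}}}$, which misses the rank-one locus, so Lemma~\ref{lem:PlaneToCremonaCubic} (equivalently, Lemma~\ref{lem:k=7 cubic curves characteristics} applied to the six-point subsets, which is how the paper phrases it) produces the curves and the two-parameter family; conversely, a two-parameter family of distinct Cremona transformations yields, via Lemma~\ref{lem:lineToCremona}, a two-parameter family of distinct lines inside $\mathcal{N}_Z$, which cannot fit in a single projective line, so $\rank(Z)<7$. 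Your kernel argument that $y_i$ spans the left kernel of $\begin{pmatrix} M_1x_i & M_2x_i & M_3x_i\end{pmatrix}$ is a fine way to see $x_i\in C_x^{\mathcal{P}}$ and $f_\ell(x_i)=y_i$, though you should justify that this matrix has rank exactly $2$: if it had rank $\leq 1$, a whole projective line of singular matrices would lie inside $\mathcal{N}_Z\cap\mathcal{D}$, contradicting that this is a smooth (hence irreducible) cubic curve.

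The genuine gap is the final claim $C_1=C_x$, $C_2=C_y$ in the reverse direction. Your first suggestion---that seven points essentially determine the cubic---is false: the cubics through seven points form a two-dimensional linear system, so passing through $x_1,\hdots,x_7$ does not pin down $C_1$. Your fallback---that $f|_{C_1}$ must ``agree'' with the epipolar isomorphism of each six-point subconfiguration, and that this agreement forces the curves to coincide---is not yet an argument: the two isomorphisms are defined on a priori different curves, so ``agreement'' has no meaning until the curves are already identified, which is exactly what needs proving. The paper closes this gap with a base-point argument: since each $f_\ell$ in the given family maps the cubic $C_1$ onto the cubic $C_2$, Lemma~\ref{lem:base points lie on cubic} forces the base points of every $f_\ell$ to lie on $C_1$; by Lemma~\ref{lem:lines have well-defined cremona transformations} these base points are exactly the right kernels of the rank-two matrices on the corresponding lines $\ell_f\subseteq\mathcal{N}_Z$, and as $\ell_f$ ranges over the two-parameter family these kernels sweep out the set of all possible epipoles, i.e.\ the epipolar curve $C_x$. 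Hence $C_x\subseteq C_1$, and since both are cubic curves they are equal; symmetrically $C_y=C_2$. Without some version of this argument, your proof of the last clause of the lemma does not go through.
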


\begin{proof}
($\Rightarrow$) Under semi-genericity, $Z$ is rank deficient if and only if the nullspace of $Z$ and the nullspaces of each of its $6\times 9$ submatrices are identical. In particular, if $P_i$ is the subset of $6$ point pairs obtained by excluding the $i$th, then, using the notation from Theorem~\ref{thm:trinity correspondence (vision)},
$\mathcal{L}_{P_1}=\ldots=\mathcal{L}_{P_7}$. Applying Lemma~\ref{lem:k=7 cubic curves characteristics}, we find that the pairs of curves $C_x^{\hat i},C_y^{\hat i}$ are identical for all $i$. Accordingly, we exclude the indexing and identify them as $C_x$ and $C_y$ respectively. Similarly, the family of Cremona transformations $\mathcal{C}_{P_1}=\ldots=\mathcal{C}_{P_7}$, and, as in Lemma~\ref{lem:k=7 cubic curves characteristics}, restricting this family to the map $C_x\to C_y$ yields a well-defined isomorphism with the property $x_i\mapsto y_i$ for all $i$.

($\Leftarrow$) For this direction, we use Theorem~\ref{thm:trinity correspondence (vision)}. In particular, the existence of such a family of Cremona transformations implies $\text{dim}(\mathcal{L}_P)=\text{dim}(\mathcal{C}_P)=2$ as illustrated in \eqref{eq:trinity (vision)}. 
Since there is a two-dimensional family of lines $\ell$ in the projective nullspace of $Z$, 
we must have $\text{rank}(Z)<7$. We now need to verify that $C_1=C_x$ and $C_2=C_y$. It follows by Lemma~\ref{lem:base points lie on cubic} that the curves $C_1,C_2$ contain all possible base points of the Cremona transformations $f_{\ell}$. Furthermore,  by Lemma~\ref{lem:lines have well-defined cremona transformations}  the sets of all such base points in the domain and codomain is exactly the set of all possible right and left epipoles. It follows that $C_x\subset C_1$ and $C_y\subset C_2$ and therefore the curves are equal.
\end{proof}

\begin{proof}[Proof of Theorem~\ref{thm:k=7 simple}]
($\Rightarrow$) This direction follows from Lemma~\ref{cor:k=7 cremona transformations result}. In particular, the isomorphism is exactly that obtained by restricting the family of Cremona transformations to the map $C_x\to C_y$.

($\Leftarrow$) Assume such curves $C_1,C_2$ exist, as well as the desired isomorphism $C_ 1\to C_2$. By Lemma~\ref{lem:isomorphisms are cremona} there is a two-parameter family of Cremona transformations $\PP^2_x \dashrightarrow\PP^2_y$ whose restriction $C_1\to C_2$ yields this isomorphism. It follows from Lemma~\ref{cor:k=7 cremona transformations result} that $Z$ is rank deficient and that $C_1=C_x$ and $C_2=C_y$.
\end{proof}

\subsection{The Cremona Hexahedral form of $C_x$ and $C_y$} \label{subsec:Cremona hexahedral}
In this section we return to the 
original characterization of the cubic curves $C_x$ and $C_y$ as the images under the quadratic maps $\kappa_x$ and $\kappa_y$ of the 
curve $C$ as in \eqref{eq:adjoint maps}. We will see 
that it is possible to derive explicit equations for these curves using the classical theory of cubic surfaces and a special invariant-theoretic representation of them called the 
Cremona hexahedral form. These ideas intersect substantially with the characterization of rank drop of $Z_6$ 
in \cite{rankdrop1}; in particular, we draw on the connection between six generic points pairs $(x_i,y_i)_{i=1}^6$ and cubic surfaces. We will begin by explicitly characterizing the curve $C=\mathcal{N}_Z\cap\mathcal{D}$ as the planar section of a cubic surface; we will then use this characterization in conjunction with material from \cite{rankdrop1} to find explicit equations for the curves $C_x$ and $C_y$.

\subsubsection{Six generic point pairs again} 
Suppose we have six generic point pairs $(x_i,y_i)_{i=1}^6$; in particular, $Z = (x_i^\top \otimes y_i^\top)_{i=1}^6$ has full rank.
Let $Z_{\hat j}$ denote the $5\times 9$ matrix obtained by deleting the $j$th row of $Z$. Then $\mathcal{N}_{Z_{\hat{j}}} \cong \PP^3$
and $S_{\hat{j}} := \mathcal{N}_{Z_{\hat{j}}} \cap \mathcal{D}$ is a smooth cubic surface in $\mathcal{N}_{Z_{\hat{j}}}$ by the genericity assumption, and hence all points on it have rank-two. It was shown in \cite{rankdrop1} that 
$S_{\hat{j}}$ is the blowup of $\PP^2_x$ at $(\{x_i\}_{i=1}^6 \setminus \{x_j\}) \cup \{\bar{x}_j\}$ where $\bar{x}_j$ is a new point that arises from $\{x_i\}_{i=1}^6 \setminus \{x_j\}$, see Lemma 6.1 of \cite{rankdrop1} for its derivation and formula. Symmetrically, $S_{\hat{j}}$ is also the 
blowup of $(\{y_i\}_{i=1}^6 \setminus \{y_j\}) \cup \{\bar{y_j}\}$ in $\PP^2_y$ where $\bar{y_j}$ is a new point determined by $\{y_i\}_{i=1}^6 \setminus \{y_j\}$. The quadratic maps 
$\kappa_x^{\hat j} \,:\, S_{\hat{j}} \to \PP^2_x$ and $\kappa_y^{\hat j} \,:\, S_{\hat{j}} \to \PP^2_y$ are $1:1$ except on the exceptional lines of the blowup. The curve $C$ is given by 
$$C = \mathcal{N}_{Z} \cap \mathcal{D} = \mathcal{N}_{Z_{\hat{j}}} \cap \mathcal{D} \cap (x_j^\top \otimes y_j^\top)^\perp = S_{\hat{j}} \cap (x_j^\top \otimes y_j^\top)^\perp.$$
Therefore, $C$ cuts each of the exceptional lines of the blowup in one point and therefore the restrictions of $\kappa_x,\kappa_y$ to $C$ are isomorphisms.

For a set of six points $u_1,\ldots,u_6 \in \PP^2$, setting $[ijk]:= \det[u_i \, u_j \, u_k]$, define 
\begin{equation}[(ij)(kl)(rs)]:=[ijr][kls]-[ijs][klr].\end{equation}
This is a classical invariant of $u_1, \ldots, u_6$ under the action of 
$\textup{PGL}(3)$ 
whose vanishing expresses that the  lines 
$\overline{u_iu_j}$, $\overline{u_ku_l}$ and $\overline{u_ru_s}$ 
meet in a point \cite{coble}*{pp 169}. Using these invariants, Coble 
defines the following six scalars \cite{coble}*{pp 170}:
\begin{small}
\begin{align}\label{eq:bumped up Joubert invariants}
\begin{split}
\bar{a}&=[(25)(13)(46)]+[(51)(42)(36)]+[(14)(35)(26)]+[(43)(21)(56)]+[(32)(54)(16)]\\
\bar{b}&=[(53)(12)(46)]+[(14)(23)(56)]+[(25)(34)(16)]+[(31)(45)(26)]+[(42)(51)(36)]\\
     \bar{c}&=[(53)(41)(26)]+[(34)(25)(16)]+[(42)(13)(56)]+[(21)(54)(36)]+[(15)(32)(46)]\\
\bar{d}&=[(45)(31)(26)]+[(53)(24)(16)]+[(41)(25)(36)]+[(32)(15)(46)]+[(21)(43)(56)]\\
\bar{e}&=[(31)(24)(56)]+[(12)(53)(46)]+[(25)(41)(36)]+[(54)(32)(16)]+[(43)(15)(26)]\\
\bar{f}&=[(42)(35)(16)]+[(23)(14)(56)]+[(31)(52)(46)]+[(15)(43)(26)]+[(54)(21)(36)]
\end{split}
\end{align}
\end{small}

Coble also defines the  
following six cubic polynomials that vanish on $u_1, \ldots, u_6$:
\begin{small}
\begin{align} \label{eq:covariant cubics}
\begin{split}
a(u)&=[25u][13u][46u]+[51u][42u][36u]+[14u][35u][26u]+[43u][21u][56u]+[32u][54u][16u]\\
b(u)&=[53u][12u][46u]+[14u][23u][56u]+[25u][34u][16u]+[31u][45u][26u]+[42u][51u][36u]\\
c(u)&=[53u][41u][26u]+[34u][25u][16u]+[42u][13u][56u]+[21u][54u][36u]+[15u][32u][46u]\\
d(u)&=[45u][31u][26u]+[53u][24u][16u]+[41u][25u][36u]+[32u][15u][46u]+[21u][43u][56u]\\
e(u)&=[31u][24u][56u]+[12u][53u][46u]+[25u][41u][36u]+[54u][32u][16u]+[43u][15u][26u]\\
f(u)&=[42u][35u][16u]+[23u][14u][56u]+[31u][52u][46u]+[15u][43u][26u]+[54u][21u][36u]
\end{split}
\end{align}
\end{small}
These cubic polynomials are {\em covariants} of $u_1, 
\ldots, u_6$ under the action of $\textup{PGL}(3)$.

It is a well-known result in algebraic geometry that every smooth cubic surface is the blowup of six points in $\PP^2$. 
The blowup procedure furnishes an algorithm to find a 
determinantal representation of the surface. However, these representations do not directly reflect the six points that were blown up. The {\em Cremona hexahedral form} of a smooth cubic surface provides 
explicit equations for the surface in terms of the points being blown up. It consists 
of the following polynomials:
\begin{align}\label{eq:canonical surface}
    \begin{split}
    z_1^3+z_2^3+z_3^3+z_4^3+z_5^3+z_6^3&=0\\
    z_1 + z_2 + z_3 + z_4 + z_5 + z_6 & = 0\\
    \bar{a} z_1+\bar{b} z_2 +\bar{c} z_3+\bar{d} z_4 +\bar{e} z_5 +\bar{f} z_6 &=0\\
\end{split}
\end{align}
Furthermore, the cubic surface can also be parameterized by   
\begin{align} \label{eq:parametrized surface}
      \overline{\{(a(u):b(u):c(u):d(u):e(u):f(u))~:~u\in\PP^2\}}.
\end{align}

We will now use the above facts  
to obtain explicit equations (that depend on $(x_i,y_i)_{i=1}^6$), of the epipolar curves $C_x$ and $C_y$. In what follows, we subscript $\bar{a}, \ldots, \bar{f}$ and $a(u), \ldots, f(u)$ with $x$ (respectively,  $y$) when $u_i = x_i$ (respectively, $u_i = y_i$).

\begin{defn}\label{def: k=7 canoncial forms c_x and c_y}
Given six point pairs
$(x_i,y_i)_{i=1}^6$, define the following cubic polynomials:
\begin{equation}\label{eq:canonical C_x and C_y}
    \begin{split}
        g_x(u):=\bar{a}_ya_x(u)+\bar{b}_yb_x(u)+\bar{c}_yc_x(u)+\bar{d}_yd_x(u)+\bar{e}_ye_x(u)+\bar{f}_yf_x(u),\\
        g_y(v):=\bar{a}_xa_y(v)+\bar{b}_xb_y(v)+\bar{c}_xc_y(v)+\bar{d}_xd_y(v)+\bar{e}_xe_y(v)+\bar{f}_xf_y(v).
    \end{split}
\end{equation}
Given seven point pairs $(x_i,y_i)_{i=1}^7$, let $g_x^{\hat{i}}$ and $g_y^{\hat{i}}$ denote the above cubic polynomials obtained from the point pairs $(x_j,y_j)_{j\neq i}$.
\end{defn}

The polynomials $g_x,g_y$ played a prominent role in the rank drop of $Z_6$ 
in \cite{rankdrop1}.

\begin{lem}\label{lem: k=7 canonical forms for C}
Given generic point pairs $(x_i,y_i)_{i=1}^6$, let $C = \mathcal{N}_Z \cap \mathcal{D}$, $C_x = \kappa_x(C) \subset \PP^2_x$ and $C_y = \kappa_y(C) \subset \PP^2_y$. Also let $S_x$ be the blowup of $\PP^2_x$ at $x_1,\ldots,x_6$ and let $S_y$ be the 
blowup of $\PP^2_y$ at $y_1,\ldots,y_6$, each expressed in Cremona hexahedral form. Then the following hold true:
\begin{enumerate}
    \item The plane cubic curves $C_x$ and $C_y$ have defining equations $g_x(u)=0$ and $g_y(v)=0$ respectively.
    \item The cubic curve $C\cong S_x\cap S_y$ which has equations:
    \begin{align}\label{eq:canonical C}
    \begin{split}
    z_1^3+z_2^3+z_3^3+z_4^3+z_5^3+z_6^3&=0\\
    z_1 + z_2 + z_3 + z_4 + z_5 + z_6 & = 0\\
    \bar{a}_x z_1+\bar{b}_x z_2 +\bar{c}_x z_3+\bar{d}_x z_4 +\bar{e}_x z_5 +\bar{f}_x z_6 &=0\\
    \bar{a}_y z_1+\bar{b}_y z_2 +\bar{c}_y z_3+\bar{d}_y z_4 +\bar{e}_y z_5 +\bar{f}_y z_6 &=0
    \end{split}
    \end{align}
    \item The cubic curve $S_x \cap S_y$ is the image of 
    $C_x$ 
    under the blowup of $\PP^2_x$ at $x_1, \ldots, x_6$ and also the image of 
    $C_y$ under the blowup of $\PP^2_y$ at $y_1,\ldots,y_6$.
    \end{enumerate}
\end{lem}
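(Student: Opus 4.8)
The plan is to put both cubic surfaces into one common $\PP^5$ and to realize $C$ as a plane cubic cut out there. Writing $S_x$ and $S_y$ in the Cremona hexahedral form \eqref{eq:canonical surface}, both live in the single $\PP^5$ with coordinates $z_1,\dots,z_6$, both satisfy the \emph{same} cubic $z_1^3+\dots+z_6^3=0$ and the \emph{same} hyperplane $z_1+\dots+z_6=0$, and they differ only in the third defining hyperplane, $L_x:=\bar a_x z_1+\dots+\bar f_x z_6=0$ for $S_x$ versus $L_y:=\bar a_y z_1+\dots+\bar f_y z_6=0$ for $S_y$. Hence $S_x\cap S_y$ is exactly the locus \eqref{eq:canonical C}, which settles the ``equations'' half of part (2) at once. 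Genericity of $(x_i,y_i)_{i=1}^6$ ensures $S_x\neq S_y$ in $\PP^5$, so $z_1+\dots+z_6$, $L_x$, $L_y$ are linearly independent; they therefore cut $\PP^5$ down to a $\PP^2$, and intersecting with $\sum z_i^3=0$ exhibits $S_x\cap S_y$ as an irreducible plane cubic (of the right dimension to match $C$).

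Next I would set up the two hexahedral embeddings of $C$. The parametrization \eqref{eq:parametrized surface} gives $\phi_x:\PP^2_x\dashrightarrow S_x$, $u\mapsto(a_x(u):\dots:f_x(u))$, which inverts the blowdown $S_x\to\PP^2_x$; define $\phi_y$ symmetrically. Composing with the isomorphisms $\kappa_x:C\to C_x$ and $\kappa_y:C\to C_y$ of \eqref{eq:adjoint maps} yields two maps $\Phi_x:=\phi_x\circ\kappa_x$ and $\Phi_y:=\phi_y\circ\kappa_y$ from $C$ into $\PP^5$. The crucial claim, and the main obstacle, is that these coincide: $\Phi_x=\Phi_y$. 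This is precisely the assertion that the hexahedral coordinate systems of $S_x$ and $S_y$ are \emph{aligned} by the pairing $x_i\leftrightarrow y_i$, i.e. that a rank-two matrix $M\in C$ with right nullvector $\kappa_x(M)$ and left nullvector $\kappa_y(M)$ satisfies $(a_x(\kappa_x(M)):\dots:f_x(\kappa_x(M)))=(a_y(\kappa_y(M)):\dots:f_y(\kappa_y(M)))$ in $\PP^5$. I would deduce it from \cite{rankdrop1}, using the description of $C$ as the hyperplane section $C=S_{\hat j}\cap(x_j^\top\otimes y_j^\top)^\perp$ and the twin blowup structure on $S_{\hat j}$ established there, which matches the two labelings of the six exceptional classes. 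I expect the bookkeeping here, tracking the correspondence of exceptional classes on the two surfaces and confirming that Coble's normalizations of the covariants are compatible, to be the technical heart of the argument.

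Granting $\Phi_x=\Phi_y$, the remainder assembles quickly. Their common image $\Gamma:=\Phi_x(C)=\Phi_y(C)$ lies in $S_x$ (as the image of $\phi_x$) and in $S_y$ (as the image of $\phi_y$), hence in $S_x\cap S_y$; since $\Gamma\cong C$ is an irreducible plane cubic sitting inside the irreducible plane cubic $S_x\cap S_y$, the two are equal, giving $C\cong S_x\cap S_y$ and finishing part (2). Because $\phi_x$ restricted to $C_x$ inverts the blowdown (an isomorphism onto the strict transform, as the smooth cubic $C_x$ passes simply through $x_1,\dots,x_6$), we have $\Gamma=\phi_x(C_x)$; that is, $S_x\cap S_y$ is the image of $C_x$ under the blowup of $\PP^2_x$ at $x_1,\dots,x_6$, and symmetrically $S_x\cap S_y=\phi_y(C_y)$, which is part (3).

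Finally I would read off part (1) from part (3). By Definition~\ref{def: k=7 canoncial forms c_x and c_y} one has the identity $g_x=L_y\circ\phi_x$ by construction, so for $u\in C_x$, $g_x(u)=L_y(\phi_x(u))=0$ because $\phi_x(C_x)=S_x\cap S_y\subseteq\{L_y=0\}$. Thus the cubic form $g_x$ (which is nonzero by genericity) vanishes on the irreducible cubic $C_x$, and a cubic form vanishing on an irreducible plane cubic is a scalar multiple of its defining equation; hence $C_x=\{g_x=0\}$. The same argument with the roles of $x$ and $y$ interchanged gives $C_y=\{g_y=0\}$, completing part (1). As a consistency check, note that $g_x(x_i)=0$ for all $i$ already follows from the covariants $a_x,\dots,f_x$ vanishing at the six points (Lemma~\ref{lem:k=7 cubic curves characteristics}), though six points alone do not pin down the cubic, which is why the surface-level identity $\phi_x(C_x)\subseteq S_y$ is the essential input.
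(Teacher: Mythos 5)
Your plan reduces the whole lemma to the single claim $\Phi_x=\Phi_y$ (equivalently, that $\phi_x(C_x)\subseteq S_y$, i.e.\ that $g_x = L_y\circ\phi_x$ vanishes on $C_x$), and you correctly flag this as ``the technical heart'' --- but you never prove it, and it is not a peripheral technicality: it \emph{is} the lemma. Indeed, granting that claim, your deductions of (2), (3), and then (1) are correct and essentially the same manipulations the paper performs (parametrize $S_x$ by the covariants, observe that the hexahedral forms share the first two equations, note that a cubic form vanishing on an irreducible plane cubic is proportional to its equation); the paper just runs them in the opposite logical order, proving (1) first and then deducing (2) and (3). So the proposal, as written, is circular in effect: the unproven ``alignment'' of the two hexahedral coordinate systems carries all of the content, and deferring it to ``bookkeeping'' of exceptional classes in \cite{rankdrop1} is not a proof. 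Matching the labelings of the six exceptional classes on $S_{\hat\jmath}$ would at best show that the two embeddings of $C$ differ by a linear automorphism of $\PP^5$ compatible with the hexahedral structure; showing that this automorphism is the \emph{identity} (i.e.\ that Coble's covariants for the $x$-points and the $y$-points agree on $C$ after the identifications $\kappa_x,\kappa_y$) is exactly what must be established, and there is no indication it follows formally.

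For contrast, here is how the paper closes this gap. It proves item (1) directly by exhibiting twelve distinct points on both $C_x$ and $\{g_x=0\}$: the six points $x_i$ (immediate, since the covariants $a_x,\dots,f_x$ vanish at the $x_i$ and $x_i\in C_x$ by Lemma~\ref{lem:k=7 cubic curves characteristics}), together with the six ``partner'' points $u_i$ of Lemma~6.1 of \cite{rankdrop1} --- for each $i$, $(u_i,v_i)$ is the unique pair making the configuration of the other five pairs rank deficient. That $u_i\in C_x$ follows because $C$ meets the corresponding exceptional line of $S_{\hat\imath}$; that $g_x(u_i)=0$ is verified by a computer algebra computation (after normalizing coordinates as in \cite{rankdrop1}). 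Two plane cubics sharing twelve distinct points must coincide, giving (1); items (2) and (3) then follow by your same parametrization argument. The need for a symbolic computation at the key step is evidence that the compatibility you postulate is not mere bookkeeping; to repair your proposal you would either need to carry out that verification yourself or find a genuinely synthetic proof that $\phi_x(C_x)\subseteq S_y$, which neither you nor the paper supplies.
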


\begin{proof}
We begin with the first item. By Lemma~\ref{lem:k=7 cubic curves characteristics}, $x_i\in C_x$ for all $i$ and by Definition~\ref{def: k=7 canoncial forms c_x and c_y}, $g_x(x_i)=0$ for all $i$ since the cubic polynomials in \eqref{eq:covariant cubics} vanish on the $x_i$. For fixed $i=1,\ldots,6$, consider the $5$ point pairs left after excluding $(x_i,y_i)$ and let $(u_i,v_i)$ be the unique new point pair (cf. Lemma 6.1 in \cite{rankdrop1}) such that the configuration 
\begin{equation}
\left\{ (x_1,y_1),\ldots,(x_6,y_6),(u_i,v_i) \right\} \setminus \{ (x_i,y_i) \}
\end{equation}
is rank deficient. For convenience, we assume without loss of generality that $i=6$. In other words, if $Z_{\hat 6}=(x_i\otimes y_i)_{i=1}^5$ then $(u_6,v_6)$ is the unique point pair such that $S_{\hat 6}=\mathcal{N}_{Z_{\hat 6}}\cap\mathcal{D}$ can be obtained both by blowing up $\PP^2_x$ in the points $x_1,\ldots,x_5,u_6$ and by blowing up $\PP^2_y$ in the points $y_1,\ldots,y_5,v_6$. It follows that the curve $C\subset S_{\hat 6}$ cuts the exceptional lines corresponding to $u_6,v_6$ exactly once each and therefore $u_6\in C_x$ and $v_6\in C_y$; it follows symmetrically that $u_i\in C_x$ and $v_i\in C_y$ for all $i=1,\ldots,6$. One can check using a computer algebra package that $g_x(u_6)=0$ and $g_y(v_6)=0$ after fixing points as in Lemma 6.1 in \cite{rankdrop1}; it follows symmetrically that $g_x(u_i)=0$ and $g_y(v_i)=0$ for all $i$. Finally, since $C_x$ and the curve cut out by $g_x$ share $12$ distinct points, they must be the same cubic curve; similarly we can conclude that $C_y$ is cut out by $g_y$. This finishes the proof of the first claim.

To prove the second and third claims, recall that $\kappa_x:C\to C_x$ is an isomorphism. Let $\kappa_x':S_x\to\PP^2_x$ and $\kappa_y':S_y\to\PP^2_y$ be the blow down morphisms. 
The Cremona hexahedral forms of $S_x$ and $S_y$ give
\begin{equation}
    S_x\cap S_y=\{z \in S_x~:~\bar a_yz_1+\ldots+\bar f_yz_6=0\}.
\end{equation}
By \eqref{eq:parametrized surface}, 
\begin{equation}
        S_x=\overline{\{(a_x(u):b_x(u):c_x(u):d_x(u):e_x(u):f_x(u))~:~u\in\PP^2\}}
\end{equation}
and since $C_x$ is cut out by $g_x(u)=0$, we get that 
\begin{equation}
\begin{split}
        S_x\cap S_y & =\overline{\{(a_x(u):\ldots:f_x(u))~:~
        \bar a_y a_x(u) +\ldots+\bar f_y f_x(u) = 0, u\in\PP^2_x\}}\\
                 & = \overline{\{(a_x(u):\ldots:f_x(u))~:~ u \in C_x\}}.
\end{split}
\end{equation}
Therefore, $S_x\cap S_y$ is exactly the image of $C_x$ under the blowup of $\PP^2_x$ 
at $x_1\ldots,x_6$. Restricting $\kappa_x$ to $\kappa_x'|_{S_x\cap S_y}:S_x\cap S_y\to C_x$ we obtain an isomorphism, and we have $S_x\cap S_y\cong C_x\cong C$, which proves the second claim. Finally, we note that by a symmetric argument, $S_x\cap S_y$ is also exactly the image of 
$C_y$ under the blowup of $\PP^2_y$ at $y_1,\ldots,y_6$ proving the 
third claim as well. 
\end{proof}

\begin{ex}[Example~\ref{ex: k=7 example 1}, continued] 
One can verify that the polynomials \eqref{eq:canonical C_x and C_y} define the same cubic curves as those in Example~\ref{ex: k=7 example 1}. We then pick a specific point $x_7=(0:1403:118)\in C_x$. Using a computer algebra package, one can compute the unique point 
$y_7=(1802855:1562942:171287)$ such that the $Z = (x_i,y_i)_{i=1}^7$ is rank deficient. It is straight-forward to 
verify that $y_7 \in C_y$.  Moreover, there is a two-parameter family of Cremona transformations $f_\ell$ such that $x_i\mapsto y_i$ for $i=1,\ldots, 6$ and for all members of this family $f_\ell(x_7)=(1802855:1562942:171287)$, which lines up with Lemma~\ref{cor:k=7 cremona transformations result}. These points can be seen on the cubic curve in Figure~\ref{fig: k=7 cubic curves 2}.

\begin{figure}
\fbox{%
\includegraphics[scale=0.1353]{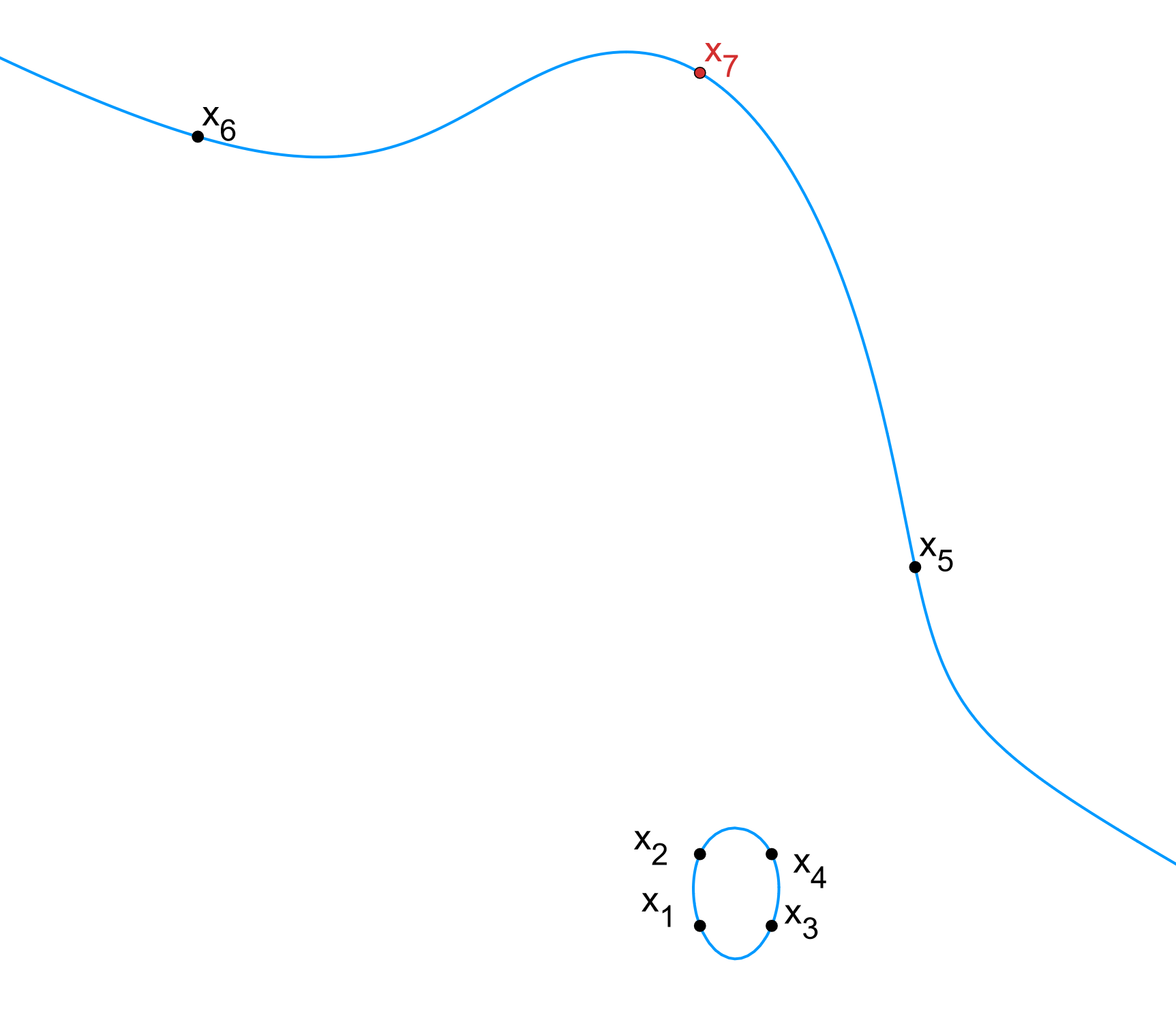}
}
\fbox{%
\includegraphics[scale=0.137]{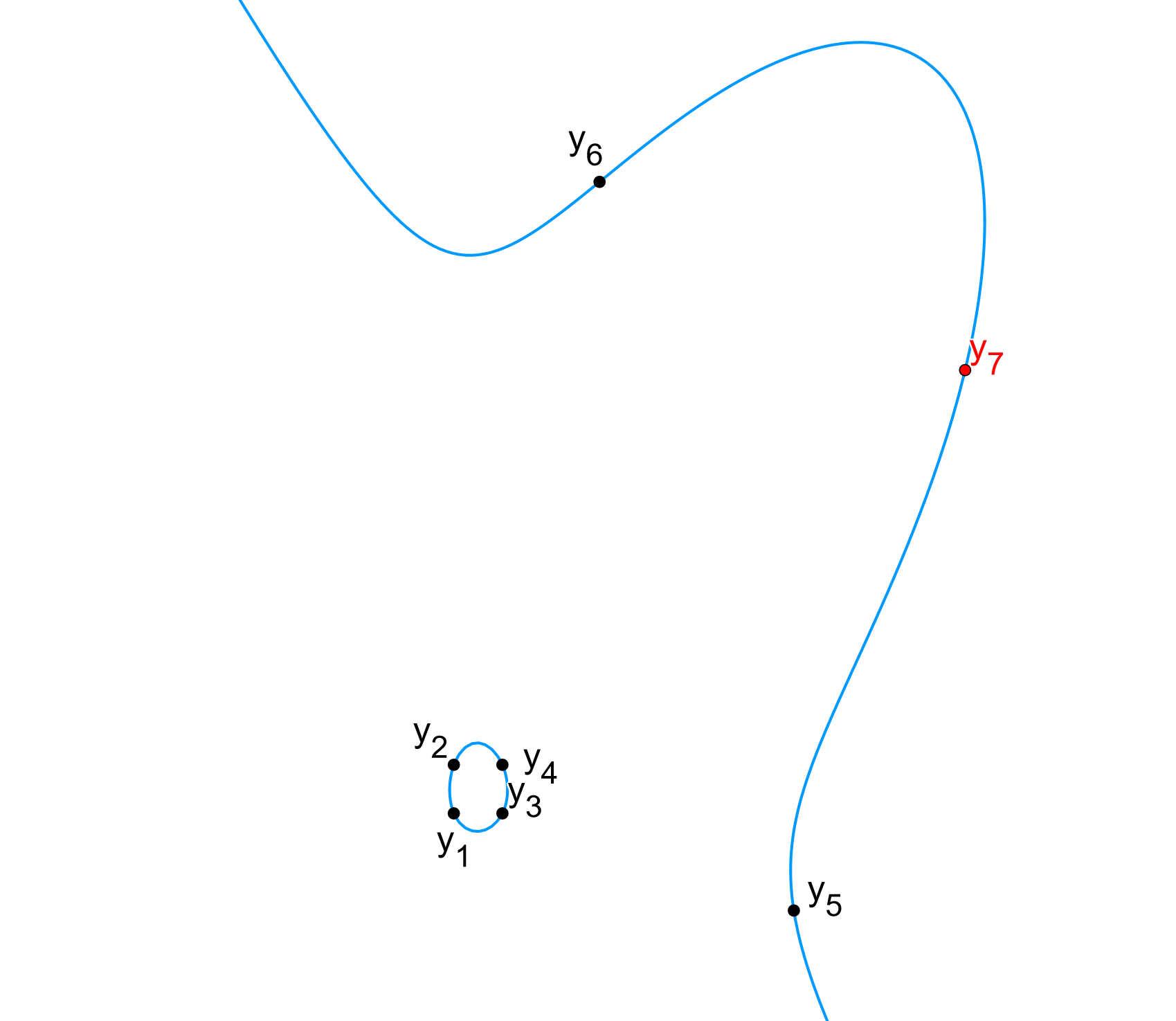}
}
\caption{The cubic curves $C_x$ and $C_y$, with $x_7$ and $y_7$ highlighted.}\label{fig: k=7 cubic curves 2}
\end{figure}
\end{ex}

\subsubsection{Algebraic Conditions for the rank deficiency of $Z_7$} \label{subsec:k=7 full proof} 
We are now ready to present our main algebraic result for rank drop given $k=7$ point pairs. We begin with a basic lemma that will connect all of our results in the main theorem.

\begin{lem}\label{lem: k=7 basic rank drop description}
Let $(x_i,y_i)_{i=1}^7$ be seven semi-generic points. 
Then $Z=(x_i^\top\otimes y_i^\top)_{i=1}^7$ is rank deficient if and only if $C^{\hat{1}}=\cdots=C^{\hat{7}}$
where $C^{\hat{i}}$ is the cubic curve $\mathcal{N}_{Z_{\hat{i}}} \cap \mathcal{D}$.
\end{lem}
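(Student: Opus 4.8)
The plan is to reduce the statement to a dimension count combined with one piece of projective geometry: that a smooth plane cubic spans the $\PP^2$ in which it lies. First I would record the set-up forced by semi-genericity. Since every six of the seven pairs form a generic configuration, each deleted-row matrix $Z_{\hat i}$ has full rank six, so its nullspace $\mathcal{N}_{Z_{\hat i}}$ is a three-dimensional linear subspace of $\CC^9$, i.e.\ a projective plane in $\PP(\CC^{3\times 3})$, and $C^{\hat i} = \mathcal{N}_{Z_{\hat i}} \cap \mathcal{D}$ is a smooth cubic curve lying in that plane. I would also record the two containments that drive the argument: $\mathcal{N}_Z \subseteq \mathcal{N}_{Z_{\hat i}}$ for every $i$, and in fact $\mathcal{N}_Z = \bigcap_{i=1}^7 \mathcal{N}_{Z_{\hat i}}$, since each row index $j$ is retained in $Z_{\hat i}$ for all $i \neq j$, so intersecting all the $\mathcal{N}_{Z_{\hat i}}$ imposes exactly the full set of row constraints.

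For the forward direction, suppose $Z$ is rank deficient. Because $\rank(Z) \ge \rank(Z_{\hat i}) = 6$, rank deficiency forces $\rank(Z) = 6$ exactly, so $\mathcal{N}_Z$ is again three-dimensional. Combining $\mathcal{N}_Z \subseteq \mathcal{N}_{Z_{\hat i}}$ with equality of dimensions gives $\mathcal{N}_Z = \mathcal{N}_{Z_{\hat i}}$ for every $i$. Intersecting this common plane with $\mathcal{D}$ then yields $C^{\hat 1} = \cdots = C^{\hat 7} = \mathcal{N}_Z \cap \mathcal{D}$.

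For the converse I would exploit that a cubic curve determines its plane. Assume $C^{\hat 1} = \cdots = C^{\hat 7}$ as subsets of $\PP(\CC^{3\times 3})$. Each $C^{\hat i}$ is a smooth, hence irreducible, plane cubic, so it is not contained in any line of $\mathcal{N}_{Z_{\hat i}} \cong \PP^2$, and therefore its linear span is all of $\mathcal{N}_{Z_{\hat i}}$. Equality of the curves then forces equality of their spans, so all the planes $\mathcal{N}_{Z_{\hat i}}$ coincide; call the common plane $\mathcal{P}$. Then $\mathcal{N}_Z = \bigcap_i \mathcal{N}_{Z_{\hat i}} = \mathcal{P}$ is three-dimensional, whence $\rank(Z) = 6 < 7$ and $Z$ is rank deficient.

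The one step that is more than bookkeeping is the claim that each $C^{\hat i}$ spans its plane; this is where semi-genericity is essential, since it guarantees that $C^{\hat i}$ is a genuine smooth cubic rather than a degenerate configuration that could fail to span. Everything else is linear algebra over the inclusion $\mathcal{N}_Z \subseteq \mathcal{N}_{Z_{\hat i}}$ together with the identity $\mathcal{N}_Z = \bigcap_i \mathcal{N}_{Z_{\hat i}}$, so I expect no real obstacle beyond carefully justifying that spanning fact.
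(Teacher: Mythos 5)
Your proof is correct and follows essentially the same route as the paper: semi-genericity gives $\rank(Z_{\hat{i}})=6$, the containment $\mathcal{N}_Z \subseteq \mathcal{N}_{Z_{\hat{i}}}$ plus a dimension count handles the forward direction, and intersecting the common nullspace plane with $\mathcal{D}$ yields the curves. Your explicit justification of the converse step --- that a cubic curve spans its plane, so equality of the $C^{\hat{i}}$ forces equality of the planes $\mathcal{N}_{Z_{\hat{i}}}$ --- fills in a detail the paper's terse proof leaves implicit, which is a welcome refinement rather than a deviation.
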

\begin{proof}
By semi-genericity, $Z$ is rank deficient if and only if 
$\mathcal{N}_Z = \mathcal{N}_{Z_{\hat{1}}}=\cdots=\mathcal{N}_{Z_{\hat{7}}}$
for each 
$6\times9$ submatrix  $Z_{\hat{i}}$ of $Z$. Since   $C^{\hat{i}}=\mathcal{N}_{Z_{\hat{i}}}\cap \mathcal{D}$, 
$Z$ is rank deficient if and only if $C = C^{\hat{1}}=\cdots=C^{\hat{7}}$.
\end{proof}

The following theorem, which is the main result of this subsection, allows us to check for rank drop without computing Cremona transformations.

\begin{thm}\label{thm: k=7 central theorem}
For $(x_i,y_i)_{i=1}^7$ semi-generic, the following are equivalent:
\begin{enumerate}
    \item $Z = (x_i^\top \otimes y_i^\top)_{i=1}^7$ is rank deficient.
    \item We have $x_i\in C_x^{\hat {i}}$ and $y_i\in C_y^{\hat{i}}$ for all $i=1,\ldots, 7$.
    \item We have $g_x^{\hat{i}}(x_i)=0$ and $g_y^{\hat{i}}(y_i)=0$ for all $i=1,\ldots,7$.
    \item All $7$ cubic curves in $\PP^2_x$ are equal: $C_x^{\hat{7}}=\ldots=C_x^{\hat{1}}$. 
    \item All $7$ cubic curves in $\PP^2_y$ are equal: $C_y^{\hat{7}}=\ldots=C_y^{\hat{1}}$. 
\end{enumerate}
\end{thm}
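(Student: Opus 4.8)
The plan is to prove the cycle by isolating one genuinely new argument and deriving everything else from the six-point machinery already in place. The equivalence $(2)\Leftrightarrow(3)$ is immediate: applying item~(1) of Lemma~\ref{lem: k=7 canonical forms for C} to the six pairs $(x_j,y_j)_{j\neq i}$ shows that $C_x^{\hat i}$ and $C_y^{\hat i}$ are cut out by $g_x^{\hat i}$ and $g_y^{\hat i}$, so $x_i\in C_x^{\hat i}\iff g_x^{\hat i}(x_i)=0$ and $y_i\in C_y^{\hat i}\iff g_y^{\hat i}(y_i)=0$. The implication $(1)\Rightarrow(2),(4),(5)$ is also quick: if $Z$ drops rank then $\mathcal{N}_Z=\mathcal{N}_{Z_{\hat i}}$ for every $i$ by Lemma~\ref{lem: k=7 basic rank drop description}, so $C=C^{\hat i}$ is independent of $i$; applying the fixed quadratic maps $\kappa_x,\kappa_y$ gives $C_x^{\hat i}=\kappa_x(C)$ and $C_y^{\hat i}=\kappa_y(C)$ independent of $i$, which is $(4)$ and $(5)$, while Theorem~\ref{thm:k=7 simple} places every $x_i$ on $C_x$ and every $y_i$ on $C_y$, which is $(2)$.

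The heart of the argument is the reverse direction, and the key observation is that the membership conditions in $(2)$ manufacture rank-two matrices inside $\mathcal{N}_Z$ itself. Suppose $x_i\in C_x^{\hat i}$ for every $i$. Since $C_x^{\hat i}=\kappa_x(C^{\hat i})$, for each $i$ there is a matrix $M_i\in C^{\hat i}=\mathcal{N}_{Z_{\hat i}}\cap\mathcal{D}$ with right kernel $x_i$, i.e.\ $M_ix_i=0$; genericity of the six pairs $(x_j,y_j)_{j\neq i}$ guarantees $M_i$ has rank exactly two with kernel exactly $x_i$. Now $M_i$ already satisfies the six epipolar equations $y_j^\top M_i x_j=0$ for $j\neq i$ (being in $\mathcal{N}_{Z_{\hat i}}$), and the seventh equation $y_i^\top M_i x_i=0$ holds \emph{automatically} because $M_ix_i=0$. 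Hence $M_i\in\mathcal{N}_Z\cap\mathcal{D}$, and we obtain seven rank-two matrices $M_1,\dots,M_7\in\mathcal{N}_Z$ whose right kernels are the distinct points $x_1,\dots,x_7$.

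I then argue this forces $\dim\mathcal{N}_Z\geq 2$, i.e.\ rank deficiency. If instead $Z$ had full rank $7$, then $\mathcal{N}_Z$ would be a line; this line cannot lie in $\mathcal{D}$, for otherwise $\mathcal{N}_Z\subseteq\mathcal{N}_{Z_{\hat 1}}\cap\mathcal{D}=C^{\hat 1}$, which is a smooth, hence irreducible, cubic curve and so contains no line. Thus $\mathcal{N}_Z$ meets the degree-three variety $\mathcal{D}$ in at most three points, so the $M_i$ would have at most three distinct kernels, contradicting the distinctness of $x_1,\dots,x_7$. This shows the $x$-half of $(2)$ alone implies $(1)$, and the $y$-half implies $(1)$ by the symmetric argument using $\kappa_y$ and left kernels; in particular $(2)\Rightarrow(1)$ and, via $(3)\Rightarrow(2)$, also $(3)\Rightarrow(1)$. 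To close $(4)\Rightarrow(1)$ and $(5)\Rightarrow(1)$ I reduce them to the same mechanism: if all the curves $C_x^{\hat i}$ coincide, then for each $i$ choosing any $j\neq i$ gives $x_i\in C_x^{\hat j}=C_x^{\hat i}$ (each of the six points defining $C_x^{\hat j}$ lies on it by Lemma~\ref{lem:k=7 cubic curves characteristics}), so the $x$-half of $(2)$ holds and $(1)$ follows; $(5)\Rightarrow(1)$ is symmetric. The main obstacle is exactly this reverse direction, and it is overcome by the observation that $M_ix_i=0$ promotes $M_i$ from $\mathcal{N}_{Z_{\hat i}}$ into $\mathcal{N}_Z$; everything else is bookkeeping over the six-point results.
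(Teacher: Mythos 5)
Your proposal is correct, and its crucial step takes a genuinely different route from the paper. The parts you call bookkeeping agree with the paper's proof: $(2)\Leftrightarrow(3)$ via Lemma~\ref{lem: k=7 canonical forms for C}, and $(1)\Rightarrow(2),(4),(5)$ via Lemma~\ref{lem: k=7 basic rank drop description} and the maps $\kappa_x,\kappa_y$ (the paper deduces $(2)$ from curve equality plus Lemma~\ref{lem:k=7 cubic curves characteristics} rather than from Theorem~\ref{thm:k=7 simple}, an immaterial difference). For the converse, the paper works inside the cubic surface $S=\mathcal{N}_{Z_5}\cap\mathcal{D}$ determined by five of the pairs: for $(2)\Rightarrow(1)$ it locates the four distinct matrices $\kappa_x^{-1}(x_6)$, $\kappa_x^{-1}(x_7)$, $\kappa_y^{-1}(y_6)$, $\kappa_y^{-1}(y_7)$ in $\mathcal{N}_Z\cap\mathcal{D}$, using both the $x$- and $y$-halves of $(2)$, and for $(4)\Rightarrow(1)$ it uses injectivity of $\kappa_x$ on $S$ to lift $C_x^{\hat i}=C_x^{\hat j}$ to $C^{\hat i}=C^{\hat j}$ and then applies Lemma~\ref{lem: k=7 basic rank drop description}. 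Your mechanism, namely that $M_i\in\mathcal{N}_{Z_{\hat i}}$ with $M_ix_i=0$ makes the $i$-th equation $y_i^\top M_ix_i=0$ automatic so that $M_i\in\mathcal{N}_Z\cap\mathcal{D}$, dispenses with the five-point surface entirely; the endgame (more points in $\mathcal{N}_Z\cap\mathcal{D}$ than the B\'ezout bound of three for a line) is the same as the paper's ``$4>3$'', but you produce seven points from the $x$-data alone. This buys two things the paper does not state: under semi-genericity the $x$-half of $(2)$/$(3)$ by itself (seven conditions rather than fourteen) already forces rank deficiency, and your explicit exclusion of the case that the line $\mathcal{N}_Z$ lies inside $\mathcal{D}$ (via irreducibility of the smooth cubic $C^{\hat 1}$) makes rigorous a step the paper's intersection count tacitly assumes. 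What the paper's route buys instead is the standalone lifting fact that $C_x^{\hat i}=C_x^{\hat j}$ if and only if $C^{\hat i}=C^{\hat j}$, which ties $(4)$ and $(5)$ to rank drop directly without detouring through $(2)$. Both arguments rest on the same standing genericity inputs (each $C^{\hat i}$ is a smooth cubic missing the rank-one locus, and the seven $x_i$ are distinct), so your appeals to them are legitimate.
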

\begin{proof} By Lemma~\ref{lem: k=7 canonical forms for C}, 
$(2)\iff(3)$. We next prove that $(1)\implies(4),(5)$. If $Z$ is rank deficient, then by Lemma~\ref{lem: k=7 basic rank drop description}, $C^{\hat{1}}=\cdots=C^{\hat{7}}$. Applying the quadratic maps $\kappa_x$ and $\kappa_y$, we obtain $(4)$ and $(5)$. To prove the reverse direction we will show $(4)\implies(1)$; the proof that $(5)\implies(1)$ is symmetric. In particular, we will show that $C_x^{\hat i}=C_x^{\hat j}$ if and only if $C^{\hat i}=C^{\hat j}$. For ease of notation, we will assume $i=6$ and $j=7$. Consider the five point pairs $(x_i,y_i)_{i=1}^5$ and the matrix $Z_5=(x_i^\top \otimes y_i^\top)_{i=1}^5$. Then $S=\mathcal{N}_{Z_5}\cap\mathcal{D}$ is a cubic surface and $\kappa_x:S\to\PP^2_x$ and $\kappa_y:S\to\PP^2_y$ are $1:1$ except on the six exceptional lines in each case. Moreover, we can obtain the cubic curves $C^{\hat 6}$ and $C^{\hat 7}$ by intersecting this surface with a plane. We can conclude that $\kappa_x(C^{\hat 6})=\kappa_x(C^{\hat 7})$ only if $C^{\hat 6}=C^{\hat 7}$. It then follows that $(4)\implies (1)$ and, symmetrically, $(5)\implies(1)$.

We now prove $(1)\implies(2)$. Fix $i\in\{1,\ldots,7\}$. Then $x_j\in C^{\hat{i}}_x$ for all $j\neq i$ by Lemma~\ref{lem:k=7 cubic curves characteristics}. Moreover, since $C^{\hat{i}}_x=C^{\hat{j}}_x$ by hypothesis it follows that $x_i\in C^{\hat{i}}_x$. The other equalities follow symmetrically.

Finally, we prove $(2)\implies(1)$. 
Since $x_j \in C_x^{\hat i}$ and $y_j\in C_y^{\hat i}$ for $j\neq i$ by construction, the additional hypothesis (2) gives that  $x_1,\ldots,x_7\in\cap_{i=1}^7 C_x^{\hat i}$ and $y_1,\ldots,y_7\in\cap_{i=1}^7 C_y^{\hat i}$. 
We fix the first five point pairs $(x_i,y_i)_{i=1}^5$ and consider the $5\times 9$ matrix $Z_5=(x_i^\top \otimes y_i^\top)_{i=1}^5$. Consider the cubic surface $S=\mathcal{N}_{Z_5}\cap\mathcal{D}$ paired with the maps $\kappa_x$ and $\kappa_y$. The cubic curves $C^{\hat 6}$ and $C^{\hat 7}$ are obtained by intersecting $S$ with a plane. By genericity, the four matrices $\kappa_x^{-1}(x_6),\kappa_x^{-1}(x_7), \kappa_y^{-1}(y_6),\kappa_y^{-1}(y_7)$ are all distinct. Moreover, they are all contained in
\begin{equation}
C^{\hat 6}\cap C^{\hat 7}=(\mathcal{N}_{Z^{\hat 6}}\cap\mathcal{D})\cap(\mathcal{N}_{Z^{\hat 7}}\cap\mathcal{D})=\mathcal{N}_Z\cap\mathcal{D}
\end{equation}
which can also be realized as the intersection of the cubic surface $S$ with two planes. 
If $\mathcal{N}_Z$ were one-dimensional, it would intersect $\mathcal{D}$ is at most three points.  Since we have found $4>3$ distinct points in $\mathcal{N}_Z\cap\mathcal{D}$, 
 $\mathcal{N}_Z$ must have projective dimension $\geq 2$, implying $(1)$.

\end{proof}
\subsection{Beyond semi-genericity}
Given seven semi-generic point pairs $(x_i,y_i)_{i=1}^7$, we have now fully characterized the conditions under which the matrix $Z_7$ will be rank deficient. This characterization was given geometrically (Theorem \ref{thm:k=7 simple}) and then algebratized using $14$ polynomials (Theorem \ref{thm: k=7 central theorem}). We now move away from the assumptions of semi-genericity. We will first examine how $Z_7$ becomes rank deficient without these assumptions and, to some extent, generalize our algebraic condition (Theorem \ref{thm: k=7 central theorem}) to this case. We will also consider configurations where $(x_i,y_i)_{i=1}^7$ are fully generic, and therefore $Z_7$ must have full rank; in this case, we can use the cubic curves $C_x^{\hat i},C_y^{\hat{i}}$ and their associated polynomials to characterize the epipoles of the possible fundamental matrices in terms of classical invariants.

We begin by presenting two relatively simple, but highly degenerate, conditions for the rank deficiency of $Z_7$. One of these conditions is that $Z_7$ will be rank deficient if $\{x_i\}$ and $\{y_i\}$ are equal up to a change of coordinates.
\begin{lem}
Suppose we have point pairs $(x_i,y_i)_{i=1}^7$ and an invertible projective transformation $H$ such that $Hx_i=y_i$ for all $i$. Then $Z = (x_i^\top \otimes y_i^\top)_{i=1}^7$ is rank deficient.
\end{lem}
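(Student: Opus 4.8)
The plan is to reduce rank deficiency of $Z$ to finding a single nonzero matrix in its nullspace, and then to produce one essentially for free from the antisymmetric matrices. As recorded in the introduction, $\rank(Z) < 7$ is equivalent to the existence of a nonzero $M \in \CC^{3\times 3}$ with $(x_i^\top \otimes y_i^\top)\operatorname{vec}(M) = 0$, i.e. $y_i^\top M x_i = 0$, for every $i = 1,\dots,7$. So the goal is to exhibit one such $M$.

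First I would use the hypothesis $Hx_i = y_i$ in $\PP^2$, which means $y_i = \lambda_i H x_i$ for some nonzero scalars $\lambda_i$. Then $y_i^\top M x_i = \lambda_i\, x_i^\top (H^\top M) x_i$, so the vanishing condition on $M$ is equivalent to $x_i^\top (H^\top M) x_i = 0$ for all $i$. Because $H$ is invertible, the map $M \mapsto N := H^\top M$ is a linear automorphism of $\CC^{3\times 3}$; hence it suffices to find a nonzero $N$ with $x_i^\top N x_i = 0$ for all $i$, and then set $M = (H^\top)^{-1} N$.

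The key observation is that the quadratic form $x \mapsto x^\top N x$ sees only the symmetric part of $N$: writing $x^\top N x = \sum_{j<k}(N_{jk}+N_{kj})x_j x_k + \sum_j N_{jj} x_j^2$, we see that it vanishes identically in $x$ whenever $N$ is antisymmetric. Since the space of $3\times 3$ antisymmetric matrices is three-dimensional, I may choose any nonzero antisymmetric $N$ --- with no reference to the points at all --- and obtain a nonzero $M = (H^\top)^{-1} N$ with $y_i^\top M x_i = 0$ for all $i$. Thus $\operatorname{vec}(M)$ is a nonzero element of the nullspace of $Z$ and $\rank(Z) < 7$. In fact this construction yields a three-dimensional family of such $M$, so $\rank(Z) \le 6$.

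There is no genuine obstacle here; the only care needed is the bookkeeping that the projective scalars $\lambda_i$ and the $\operatorname{vec}$-identification are harmless, together with the elementary observation isolating the symmetric part of $N$. It is worth noting, in keeping with the reconstruction theme of the paper, that every nonzero $3\times 3$ antisymmetric matrix has rank two, and rank is preserved under the invertible map $N \mapsto (H^\top)^{-1}N$; hence each $M$ produced above is automatically a rank-two fundamental matrix of the point pairs, so this degenerate configuration sits well inside the locus where $Z$ drops rank.
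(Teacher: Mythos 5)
Your proof is correct and uses essentially the same idea as the paper: both arguments produce a three-dimensional family of nullspace elements from the skew-symmetric matrices, yielding $\rank(Z)\le 6$. The only cosmetic difference is that the paper invokes projective invariance of the rank to normalize to $x_i=y_i$ and then uses $\textup{Skew}_3$ directly, whereas you perform the equivalent change of variables $N=H^\top M$ explicitly; your closing observation that each resulting $M$ has rank two is a nice bonus consistent with the paper's notion of fundamental matrices.
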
 \label{lem:homography causes rank drop}
\begin{proof}
Since rank drop is a projective invariant, we can assume $x_i=y_i$ for all $i$. Then the equations  $y_i^\top F x_i= x_i^\top F x_i = 0, i=1,\ldots,7$  hold for all $3 \times 3$ skew-symmetric matrices $F \in \textup{Skew}_3$. Since $\textup{Skew}_3$ is a three-dimensional vector space, $\dim(\mathcal{N}_{Z}) \geq 3$ and $\text{rank}(Z)\leq 9-3=6$.
\end{proof}

The second simple condition is that the rank of $Z$ will drop if the points in either $\PP^2$ lie in a line.

\begin{lem}\label{lem: k=7 one side in a line}
Suppose $(x_i,y_i)_{i=1}^7$ is such that either $\{x_i\}$ or $\{y_i\}$ are in a line. Then $Z = (x_i^\top \otimes y_i^\top)_{i=1}^7$ is rank deficient.
\end{lem}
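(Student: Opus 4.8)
The plan is to exhibit a three-dimensional subspace of the nullspace $\mathcal{N}_Z$ directly, mirroring the argument used in Lemma~\ref{lem:homography causes rank drop}. Recall that $\operatorname{vec}(M)$ lies in $\mathcal{N}_Z$ precisely when $y_i^\top M x_i = 0$ for all $i=1,\ldots,7$. Hence it suffices to produce a three-dimensional space of matrices $M$ satisfying these equations: this forces $\dim(\mathcal{N}_Z) \geq 3$ and therefore $\rank(Z) \leq 9-3 = 6 < 7$.

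First I would treat the case where $\{x_i\}$ lies on a line. Such a line has the form $\{x \in \PP^2 : \ell^\top x = 0\}$ for some nonzero $\ell \in \CC^3$, so that $\ell^\top x_i = 0$ for every $i$. For any $w \in \CC^3$ set $M = w\ell^\top$. Then $M x_i = w(\ell^\top x_i) = 0$, and consequently $y_i^\top M x_i = 0$ for all $i$, independent of the choice of $w$. Since $\ell \neq 0$, the matrices $\{w\ell^\top : w \in \CC^3\}$ form a three-dimensional linear space, all of whose vectorizations lie in $\mathcal{N}_Z$, giving the desired rank bound.

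The case where $\{y_i\}$ lies on a line is handled symmetrically. Writing the line as $\{y : m^\top y = 0\}$ with $m \neq 0$, so that $m^\top y_i = 0$ for all $i$, I take $M = m w^\top$ for arbitrary $w \in \CC^3$. Then $y_i^\top M = (y_i^\top m)\, w^\top = 0$, hence $y_i^\top M x_i = 0$ for every $i$, and $\{m w^\top : w \in \CC^3\}$ is again a three-dimensional subspace of $\mathcal{N}_Z$.

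There is no genuine obstacle here: the argument is a short, direct construction, and the only point to check is that the exhibited matrices really span three dimensions, which is immediate from $\ell \neq 0$ (resp. $m \neq 0$). In either case $\rank(Z) \leq 6$, so $Z$ is rank deficient, completing the proof.
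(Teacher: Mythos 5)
Your proof is correct and takes essentially the same approach as the paper: the paper normalizes coordinates so that $y_i=(m_i,0,1)$, whereupon columns $2$, $5$, $8$ of $Z$ vanish, and those three zero columns are precisely the vectorizations of your rank-one matrices $mw^\top$ with $m=e_2$. Your argument is simply the coordinate-free phrasing of that same three-dimensional nullspace, and it has the minor added virtue of treating the $x$-side and $y$-side cases explicitly rather than by appeal to symmetry.
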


\begin{proof}
Suppose the $y_i$'s are in a line. Then we may assume that $y_i=(m_i,0,1)$ after a change of coordinates. Then simple column operations on $Z$ show that it is rank deficient. 
\end{proof}
\begin{rem}
We note that the existence of such configurations does not necessarily imply that the rank drop variety is reducible. We suspect that these configurations are in the Zariski closure of the generic rank drop component.
\end{rem}
It is simple to check that in both of the above cases we will have $g_x^{\hat i}(x_i)=0=g_y^{\hat i}(y_i)$ for all $i=1,\ldots,7$, suggesting a possible generalization of Theorem \ref{thm: k=7 central theorem}(3). This is possible to some extent. In particular, even without any genericity assumptions, if $Z_7$ is rank deficient then these $14$ polynomial equations hold.

\begin{lem}\label{lem: k=7 If rank drop then 14 equations}
If $Z = (x_i^\top \otimes y_i^\top)_{i=1}^7$ is rank deficient then $g_x^{\hat{i}}(x_i)=0$ and $g_y^{\hat{i}}(y_i)=0$ for all $i$.
\end{lem}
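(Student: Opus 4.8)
The plan is to isolate the purely linear-algebraic heart of the statement, which needs no genericity, from the identification of the epipolar loci with the explicit Coble cubics, which is where all genericity must be spent. By the evident symmetry between the two families of equations it suffices to treat $g_x^{\hat i}(x_i)=0$; I would fix $i$ and, for readability, take $i=7$.

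The first and main step is to extract the key consequence of rank deficiency. If $Z$ is rank deficient then $\mathcal{N}_Z$ has vector-space dimension at least $3$, so I may choose a three-dimensional subspace $\mathcal{P}\subseteq\mathcal{N}_Z\subseteq\mathcal{N}_{Z_{\hat 7}}$. Since row $7$ of $Z$ is $x_7^\top\otimes y_7^\top$, every $M\in\mathcal{P}$ satisfies $y_7^\top M x_7=0$. Consider the linear evaluation map $\psi:\mathcal{P}\to\CC^3$ given by $\psi(M)=Mx_7$. Its image lies in the plane $\{v\in\CC^3:y_7^\top v=0\}$, which is only two-dimensional, so $\operatorname{rank}(\psi)\le 2$ and $\ker\psi\neq 0$. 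Hence there is a nonzero $M_0\in\mathcal{N}_{Z_{\hat 7}}$ with $M_0x_7=0$; that is, $x_7$ lies on the right epipolar locus $\{x:\exists\,0\neq M\in\mathcal{N}_{Z_{\hat 7}},\ Mx=0\}$ of the six pairs $(x_j,y_j)_{j\neq 7}$. The identical argument applied to $M\mapsto M^\top y_7$ places $y_7$ on the corresponding left epipolar locus. This step is entirely robust and uses no genericity.

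Next I would convert ``$x_7$ lies on the epipolar locus'' into the polynomial equation $g_x^{\hat 7}(x_7)=0$. When the complementary six pairs are fully generic, $\mathcal{N}_{Z_{\hat 7}}$ is a plane containing no rank-one matrix, so by Lemma~\ref{lem:PlaneToCremonaCubic} the epipolar locus is exactly the cubic $C_x^{\hat 7}$ cut out by $\det\begin{pmatrix} M_1x & M_2x & M_3x\end{pmatrix}$, and Lemma~\ref{lem: k=7 canonical forms for C}(1) identifies this cubic with $\{g_x^{\hat 7}=0\}$. Thus the desired equation $g_x^{\hat 7}(x_7)=0$ holds on the intersection of the rank-deficiency locus $\mathcal{R}$ with the open dense set $\mathcal{G}_7$ on which the $\hat 7$ subset is fully generic, and symmetrically for $g_y^{\hat 7}(y_7)$.

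Finally I would pass from $\mathcal{R}\cap\mathcal{G}_7$ to all of $\mathcal{R}$ by Zariski closure: $g_x^{\hat 7}(x_7)$ is a polynomial whose zero set is closed and which vanishes on $\mathcal{R}\cap\mathcal{G}_7$, hence on $\overline{\mathcal{R}\cap\mathcal{G}_7}$. The hard part is precisely this last step, namely showing $\overline{\mathcal{R}\cap\mathcal{G}_7}=\mathcal{R}$, equivalently that no irreducible component of $\mathcal{R}$ is swallowed by the degeneracy locus $\mathcal{G}_7^c$ where the complementary six-tuple fails to be generic. I expect this to be the main obstacle, since the paper itself only conjectures that the degenerate configurations lie in the closure of the main rank-drop component. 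I would attempt to resolve it by a dichotomy: on the main component $\overline{\mathcal{R}\cap\mathcal{S}}$ (the closure of the semi-generic rank-drop locus) the points of $\mathcal{R}\cap\mathcal{G}_7$ are dense, since semi-genericity already forces the $\hat 7$ subset to be generic, so closure applies directly; and on the known degenerate strata, with $\{x_j\}$ or $\{y_j\}$ collinear (Lemma~\ref{lem: k=7 one side in a line}) or $y_i=Hx_i$ for a homography (Lemma~\ref{lem:homography causes rank drop}), the brackets building the Coble covariants collapse so that $g_x^{\hat i}$ degenerates and $g_x^{\hat i}(x_i)=0$ is verified by direct substitution, exactly as noted before the statement. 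Making this dichotomy exhaustive, i.e.\ controlling every component of $\mathcal{R}$ that meets $\mathcal{G}_7^c$, is the delicate point where I would concentrate the remaining effort.
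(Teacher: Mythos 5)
Your first two steps are sound: the kernel argument producing a nonzero $M_0\in\mathcal{N}_{Z_{\hat 7}}$ with $M_0x_7=0$ is a clean, genericity-free observation, and on the locus $\mathcal{G}_7$ where the complementary six pairs are fully generic, Lemma~\ref{lem:PlaneToCremonaCubic} and Lemma~\ref{lem: k=7 canonical forms for C}(1) do convert it into $g_x^{\hat 7}(x_7)=0$. But the closure step is a genuine gap, and your proposed dichotomy cannot be made exhaustive as stated. The concrete obstruction is the \emph{inherited} part of the rank-drop locus: take six pairs $(x_j,y_j)_{j\neq 7}$ for which $Z_6=(x_j^\top\otimes y_j^\top)_{j\neq 7}$ is itself rank deficient --- e.g.\ a semi-generic configuration satisfying the Cayley octad condition of Corollary~\ref{cor:quadrics are rank drop} --- and let $(x_7,y_7)$ be completely arbitrary. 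Then $\rank(Z_7)\leq \rank(Z_6)+1\leq 6$, so this entire stratum lies in your $\mathcal{R}$; yet it is disjoint from $\mathcal{G}_7$ (the $\hat 7$ subset is never generic there), it is not covered by Lemma~\ref{lem: k=7 one side in a line} or Lemma~\ref{lem:homography causes rank drop} (semi-generic rank-deficient six-tuples are neither collinear nor related by a homography), and whether it lies in $\overline{\mathcal{R}\cap\mathcal{G}_7}$ is exactly the component question you leave open --- the paper itself only \emph{suspects} such closure statements (see the remark following Lemma~\ref{lem: k=7 one side in a line}). Worse, for the lemma to hold on this stratum you would need $g_x^{\hat 7}(x_7)=0$ for \emph{every} $x_7\in\PP^2$, i.e.\ that the cubic form $g_x^{\hat 7}$ vanishes identically whenever the underlying six-tuple drops rank: a nontrivial assertion about the $k=6$ theory that your argument neither formulates nor proves.

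The paper's proof is entirely different and bypasses all of this: treating the $42$ coordinates as indeterminates, it verifies in Macaulay2 that the ideal $I$ generated by the $14$ polynomials $g_x^{\hat i}(x_i)$, $g_y^{\hat i}(y_i)$ is contained in the ideal generated by the maximal $7\times 7$ minors of the symbolic matrix $Z$. Ideal containment gives the implication pointwise at every configuration, with no genericity, no component analysis, and no closure argument. Your route is more conceptual on the open stratum, but to complete it you would have to either prove the identical-vanishing statement above for every inherited stratum (and show the remaining degenerate strata are exhausted), or prove $\overline{\mathcal{R}\cap\mathcal{G}_7}=\mathcal{R}$; as it stands, your argument establishes the lemma only on $\overline{\mathcal{R}\cap\mathcal{G}_7}$, which is strictly weaker than the unconditional statement the paper needs.
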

\begin{proof}
Let $I$ be the ideal generated by the $14$ polynomials 
$g_x^{\hat{i}}(x_i)$ and $g_y^{\hat{i}}(y_i)$ for $i=1,\ldots,7$
in the polynomial ring $\CC[x_{ij},y_{ij} : i=1, \hdots, 7, j=1,2,3]$,
treating $(x_i,y_i)_{i=1}^7$ as symbolic. If $Z$ is the appropriate symbolic $7\times 9$ matrix then it can be verified using Macaulay2 that $I$ is contained in the ideal generated by the maximal minors of $Z$. 
\end{proof}

However, the converse does not hold in general. We present two examples of highly degenerate configurations where the $14$ equations hold, but $Z_7$ is not rank deficient.

\begin{ex}\label{ex:k=7 degenerate 1}
Take $x_i$ to be the columns of the matrix $X$ and $y_i$ to be the columns of the matrix $Y$
\begin{equation}
X=\begin{bmatrix} 0 & 1 & 3 & 4 & 0 & 0 & 7\\ 0 & 0 & 0 & 0 & 1 & 1 & 0\\ 1 & 0 & 1 & 1 & 0 & 0 & 1\end{bmatrix}\quad\quad Y=\begin{bmatrix} 0 & 1 & 4 & 0 & 9 & 1 & 0\\ 0 & 0 & 0 & 1 & 0 & 0 & 1\\ 1 & 0 & 1 & 0 & 1 & 1 & 0\end{bmatrix}
\end{equation}
where $x_1,x_2,x_3,x_4,x_7$ are on a line and $x_5=x_6$. Similarly, $y_1,y_2,y_3,y_5,y_6$ are on a line and $y_4=y_7$. We can verify that $g_x^{\hat{i}}(x_i)=0=g_y^{\hat {i}}(y_i)$ for all $i=1,\ldots,7$ and that the matrix $Z$ is not rank deficient. In particular, $\mathcal{N}_Z$ is spanned by the singular matrices
$$\begin{bmatrix}0 & 0 & -3\\ 0 & 0 & 0\\ 4 & 0 & 0\end{bmatrix}\quad\quad\begin{bmatrix}0 & 0 & 0\\ 0 & 1 & 0\\ 0 & 0 & 0\end{bmatrix}$$
the latter of which has rank one.
\end{ex}
\begin{ex}\label{ex:k=7 degenerate 2}
Take $x_i$ to be the columns of the matrix $X$ and $y_i$ to be the columns of the matrix $Y$
\begin{equation}
X=\begin{bmatrix} 1 & 2 & 5 & 1 & 2 & 3 & 7\\ 0 & 0 & 0 & 0 & 1 & 2 & 6\\ 1 & 1 & 1 & 0 & 1 & 1 & 1\end{bmatrix}\quad\quad Y=\begin{bmatrix} 0 & 0 & 0 & 0 & 1 & 3 & 4\\ 1 & 5 & 1 & 0 & 2 & 6 & 8\\ 1 & 1 & 0 & 1 & 1 & 1 & 1\end{bmatrix}
\end{equation}
where $\{x_i\}_{i=1}^4,\{y_i\}_{i=1}^4$ and $\{x_i\}_{i=5}^7,\{y_i\}_{i=5}^7$ are on distinct lines in each image. We can verify that $g_x^{\hat{i}}(x_i)=0=g_y^{\hat {i}}(y_i)$ for all $i=1,\ldots,7$ and that the matrix $Z$ is not rank deficient. In particular, $\mathcal{N}_Z$ is spanned by the rank one matrices
$$\begin{bmatrix}0 & -2 & 0\\ 0 & 1 & 0\\ 0 & 0 & 0\end{bmatrix}\quad\quad\begin{bmatrix}-1 & 1 & 1\\ 0 & 0 & 0\\ 0 & 0 & 0\end{bmatrix}.$$
\qed
\end{ex}

While the focus of this paper has been on the conditions under which $Z$ drops 
rank, the tools we have developed have applications beyond rank drop. In particular, for a fully generic configuration of seven point pairs we can use the cubic curves $C_x^{\hat i}$ and $C_y^{\hat i}$ to find the possible epipoles of fundamental matrices. While this has minimal practical application, it is significant in that the characterization is entirely in terms of classical projective invariants.

\begin{lem}\label{lem:epipoles are intersection}
Let $(x_i,y_i)_{i=1}^7$ be generic point pairs. In particular, we assume $\mathcal{N}_{Z}$ is one dimensional and contains three rank-two matrices $F_1,F_2,F_3$, two of which may be complex. Then the epipoles of these fundamental matrices, $e_1^x,e_2^x,e_3^x$ and $e_1^y,e_2^y,e_3^y$, can be obtained as the unique three points in the intersections $\cap_{i=1}^7 C_x^{\hat i}\subset\PP^2_x$ and $\cap_{i=1}^7 C_y^{\hat i}\subset\PP^2_y$.
\end{lem}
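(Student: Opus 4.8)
The plan is to prove the statement for the right epipoles $e_1^x,e_2^x,e_3^x$ and the curves $C_x^{\hat i}$; the argument for the left epipoles and the $C_y^{\hat i}$ is completely symmetric. Write $F_1,F_2,F_3$ for the three rank-two matrices on the line $\mathcal{N}_Z$, so that $e_j^x=\kappa_x(F_j)$. The easy inclusion is to check that each epipole lies in every curve. Since deleting a row only enlarges the nullspace, $\mathcal{N}_Z\subseteq\mathcal{N}_{Z_{\hat i}}$ for all $i$, and hence $F_j\in\mathcal{N}_Z\cap\mathcal{D}\subseteq\mathcal{N}_{Z_{\hat i}}\cap\mathcal{D}=C^{\hat i}$. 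Applying $\kappa_x$ gives $e_j^x=\kappa_x(F_j)\in C_x^{\hat i}$ for every $i$, so $\{e_1^x,e_2^x,e_3^x\}\subseteq\bigcap_{i=1}^7 C_x^{\hat i}$; by Lemma~\ref{lem:lines have well-defined cremona transformations} these three points are distinct.

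The substance is the reverse inclusion: no other point survives the intersection. I would avoid the naive route of intersecting two cubics and invoking B\'ezout, since two of these curves meet in nine points — the three epipoles together with six ``spurious'' points coming from the shared input points and one residual intersection — and chasing these through all seven curves is awkward. Instead I would package the seven membership conditions into a single linear-algebra statement. Fix a point $p\in\bigcap_{i=1}^7 C_x^{\hat i}$ and set $V_p:=\{M\in\CC^{3\times 3}:Mp=0\}$, a $6$-dimensional space because $M\mapsto Mp$ is onto $\CC^3$. Consider the linear map
\[
\Psi_p:V_p\longrightarrow\CC^7,\qquad \Psi_p(M)=\bigl(y_1^\top M x_1,\dots,y_7^\top M x_7\bigr).
\]
Its kernel is exactly $V_p\cap\mathcal{N}_Z$, and for each $i$ the membership $M\in\mathcal{N}_{Z_{\hat i}}$ is precisely the condition $\Psi_p(M)\in\CC\,\mathbf e_i$, the $i$-th coordinate axis.

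Now comes the key dichotomy. Because $p\in C_x^{\hat i}=\kappa_x(\mathcal{N}_{Z_{\hat i}}\cap\mathcal{D})$, there is a rank-two matrix in $\mathcal{N}_{Z_{\hat i}}$ killing $p$, so $V_p\cap\mathcal{N}_{Z_{\hat i}}\neq 0$, i.e. $\Psi_p^{-1}(\CC\,\mathbf e_i)\neq 0$ for every $i=1,\dots,7$. Suppose $\ker\Psi_p=0$. Then $\Psi_p$ is injective, so $\Psi_p^{-1}(\CC\,\mathbf e_i)\neq 0$ forces $\mathbf e_i\in\operatorname{im}\Psi_p$ for each $i$; having all seven standard basis vectors in the image would give $\operatorname{im}\Psi_p=\CC^7$, contradicting $\dim\operatorname{im}\Psi_p\le\dim V_p=6$. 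Hence $\ker\Psi_p=V_p\cap\mathcal{N}_Z\neq 0$, so there is a nonzero $M\in\mathcal{N}_Z$ with $Mp=0$. By genericity $\mathcal{N}_Z$ contains no rank-one matrix, so $M$ has rank two and is therefore one of $F_1,F_2,F_3$; consequently $p=\kappa_x(M)\in\{e_1^x,e_2^x,e_3^x\}$. This yields $\bigcap_{i=1}^7 C_x^{\hat i}=\{e_1^x,e_2^x,e_3^x\}$, exactly three points, as claimed.

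The main obstacle is precisely the elimination of the spurious intersection points, and the crucial idea is the strict dimension count $\dim V_p=6<7$: it is this inequality, fed through the single map $\Psi_p$, that converts the seven separate conditions $p\in C_x^{\hat i}$ into a forced common zero living in $\mathcal{N}_Z$. Beyond this, I would only need to record the routine genericity inputs used along the way — that $\mathcal{N}_Z$ is a line meeting $\mathcal{D}$ in three distinct rank-two matrices and avoiding the rank-one (Segre) locus, and that the three epipoles are distinct — all of which hold for a generic configuration by the hypotheses together with Lemma~\ref{lem:lines have well-defined cremona transformations}.
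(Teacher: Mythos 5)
Your proof is correct, and it takes a genuinely different route from the paper's. The paper argues via B\'ezout on just two of the curves: it accounts for all nine points of $C_x^{\hat 6}\cap C_x^{\hat 7}$ as the five shared input points $x_1,\dots,x_5$, the auxiliary rank-drop point $u_6$ attached to the first five pairs (from \cite{rankdrop1}), and the three base points of the unique Cremona transformation sending $x_i\mapsto y_i$ for $i=1,\dots,7$, which coincide with the epipoles by Lemma~\ref{lem:lines have well-defined cremona transformations}; it then removes the six spurious points by asserting that, generically, they do not lie on all seven curves. You replace all of this with the single dimension count $\dim V_p=6<7$ fed through $\Psi_p$, which converts the seven membership conditions $p\in C_x^{\hat i}$ directly into a nonzero singular matrix of $\mathcal{N}_Z$ annihilating $p$. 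What your route buys: it needs no B\'ezout, no auxiliary point $u_6$, no existence of the seven-point Cremona transformation, and it treats all seven curves symmetrically; most notably, the exclusion of spurious intersection points becomes a theorem rather than the paper's unargued ``clearly $x_1,\ldots,x_5,u_6\notin\cap_{i=1}^7 C_x^{\hat i}$ generically'' step. What the paper's route buys: finer information, namely a full description of the pairwise intersection $C_x^{\hat 6}\cap C_x^{\hat 7}$ and the identification of the epipoles with Cremona base points, which ties the lemma back to the trinity correspondence that organizes the paper. One input you should state explicitly: the implication ``$p\in C_x^{\hat i}\Rightarrow V_p\cap\mathcal{N}_{Z_{\hat i}}\neq 0$'' uses that every matrix of $C^{\hat i}$ has rank two (so that $\kappa_x$ is defined on all of $C^{\hat i}$), i.e.\ that each plane $\mathcal{N}_{Z_{\hat i}}$ — not only the line $\mathcal{N}_Z$ — avoids the rank-one locus; this holds for generic pairs, but it belongs on your list of genericity assumptions alongside the ones you recorded.
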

\begin{proof}
Consider the two cubic curves $C_x^{\hat 7}, C_x^{\hat 6}$. The intersection $A_{6,7}=C_x^{\hat 7}\cap C_x^{\hat 6}$ will contain exactly nine points. We know that $x_1,\ldots,x_5\in A_{6,7}$. Additionally, let $(u_6,v_6)$ be the pair of rank drop points, as in Lemma 5.1 of \cite{rankdrop1}, associated to  $(x_i,y_i)_{i=1}^5$. Then, by Lemma~\ref{lem: k=7 If rank drop then 14 equations}, $u_6\in A_{6,7}$ as well. There should be three more points in the intersection. Let $f$ be the unique Cremona transformation  $f:\PP^2_x\dashrightarrow\PP^2_y$ such that $x_i\mapsto y_i$ for $i=1,\ldots,7$. This $f$ is contained in the two parameter family of Cremona transformations $\PP^2_x\dashrightarrow\PP^2_y$ such that $x_i\mapsto y_i$ for all $i=1,\ldots,6$. By Lemma~\ref{lem:k=7 cubic curves characteristics} the base points of $f$ are contained in $C_x^{\hat 7}$. By a symmetric argument these base points are also contained in $C_x^{\hat 6}$ and we can conclude that these three base points are the last three points in the intersection. By Lemma~\ref{lem:lines have well-defined cremona transformations} these base points are exactly the epipoles of the fundamental matrices, and it follows by symmetry that $e_x^1,e_x^2,e_x^3\in\cap_{i=1}^7 C_x^{\hat i}$. Clearly the points $x_1,\ldots,x_5,u_6\not\in\cap_{i=1}^7 C_x^{\hat i}$ generically, and thus these three base points are the unique points in the intersection of all seven cubic curves. Symmetrically, $e_1^y,e_2^y,e_3^y$ are the unique points in $\cap_{i=1}^7 C_y^{\hat{i}}$.
\end{proof}
\begin{ex}
Take $x_i$ to be the columns of the matrix $X$ and $y_i$ to be the columns of the matrix $Y$.
\begin{equation}
X=\begin{bmatrix}
3 & 2 & 5 & 0 & 4 & -20 & -4\\ 
0 & 7 & 3 & 3 & 2 & 25 & 7\\ 
1 & 1 & 2 & 1 & 5 & 12 & 2\end{bmatrix}\quad\quad
Y=\begin{bmatrix}
0 & -49 & -15 & -3 & -5 & 5 & 7\\ 
-1 & 14 & 25 & 0 & 10 & 4 & 4\\ 
1 & 9 & 4 & 1 & 6 & 2 & 1\end{bmatrix}
\end{equation}
 We can then construct the seven cubic curves $C_x^{\hat{i}}$ and $C_y^{\hat{i}}$ in each $\PP^2$. See Figure~\ref{fig:7 cubic curves in each image}. Each set of seven cubic curves has three common intersection points. If we compute $\mathcal{N}_Z$ we find that there are exactly three possible real fundamental matrices. These matrices have epipoles
 \begin{figure}
\fbox{%
\includegraphics[scale=1.8]{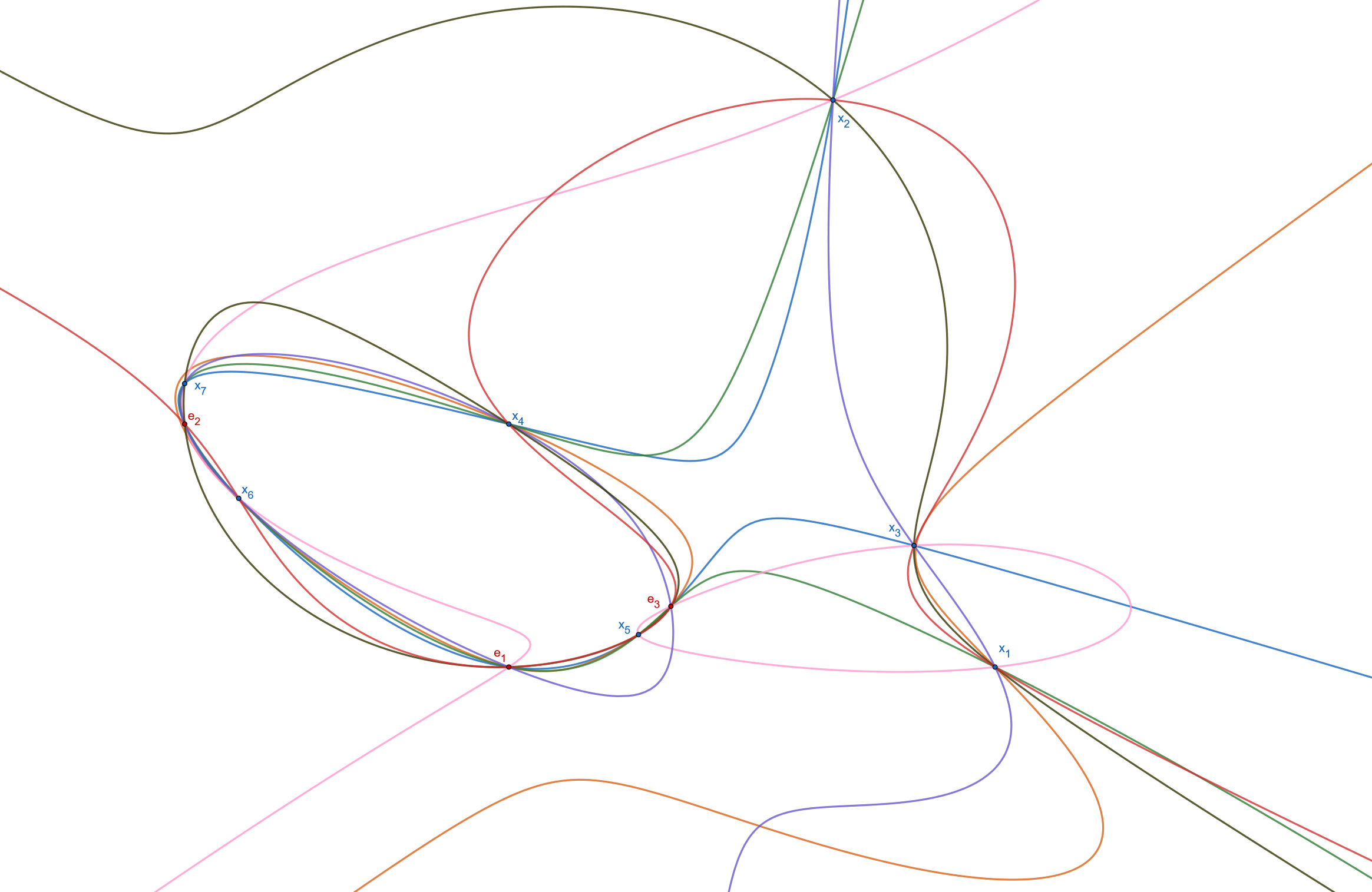}
}
\fbox{%
\includegraphics[scale=0.9]{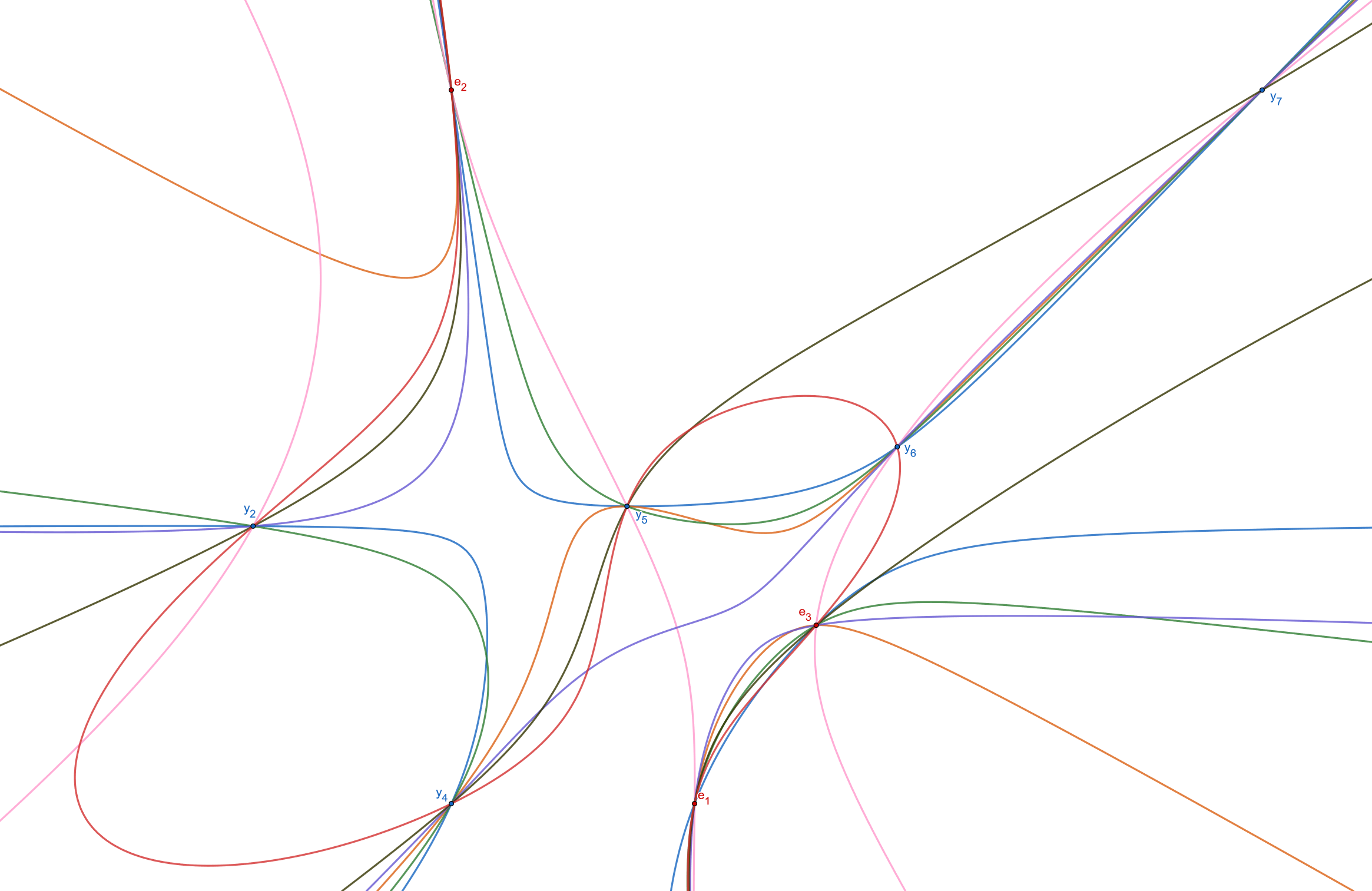}
}
\caption{The cubic curves $C_x^{\hat{i}}$ and $C_y^{\hat{i}}$. The intersection points are exactly the three possible epipoles associated to the fundamental matrices.}\label{fig:7 cubic curves in each image}
\end{figure}
\begin{equation}
\begin{split}
e_x^1=(0:0:1)\quad\quad &e_y^1=(0:0:1)\\
e_x^2=(-2:3:1)\quad\quad &e_y^2=(-3:4:1)\\
e_x^3=(4:3:4)\quad\quad &e_y^3=(3:2:2)
\end{split}
\end{equation}
and we can see that these are exactly the three common intersection points.
\end{ex}


\section{$k=9$}\label{sec:k=9}

We finish by characterizing the rank deficiency of 
$Z =  (x_i^\top \otimes y_i^\top)_{i=1}^9$, and this time we make no 
assumptions on the point pairs $(x_i,y_i)_{i=1}^9$. A simple algebraic characterization of rank drop in this case is that $\det(Z)=0$. This is a single polynomial equation but as mentioned already, typically this equation does not shed much light on the geometry of the points $\{x_i\}$ and $\{y_i\}$ that makes $Z$ rank deficient. By the methods of invariant theory, it 
is possible to write $\det(Z)$ as a polynomial in the brackets $[ijk]_x$ and $[ijk]_y$ constructed from $\{x_i\}$ and $\{y_i\}$ which may or may not offer geometric insight. Below we provide a 
geometric characterization of rank drop in terms of the two point sets in $\PP^2_x$ and $\PP^2_y$. The result is straight-forward.

Recall that if $a, b$ are distinct points in $\PP^2$, then $a \times b \in \PP^2$ is the normal of the line containing $a$ and $b$, i.e., 
$u \in \textup{Span}\{a,b\}$ if and only if 
$u^\top (a \times b) = 0$. In what follows we let $\ell_{ab}$ denote the line spanned by $a,b$. Its normal 
$a \times b = [a]_\times b$ where 
$[a]_\times$ is the $3 \times 3$ skew symmetric matrix that expresses cross products with $a$ as a matrix-vector multiplication.

\begin{thm} \label{thm:k=9}
The matrix $Z=(x_i^\top \otimes y_i^\top)_{i=1}^9$ is rank deficient if and only if  there is a projective transformation $T\,:\, \PP^2_x \dashrightarrow \PP^2_y$ such that $y_i^\top (Tx_i) = 0$ for all $i=1, \ldots, 9$, or equivalently, $y_i$ lies on the line with normal vector $Tx_i$ for all $i=1,\ldots,9$. 

This manifests  in three possible ways depending on the rank of $T$:
\begin{enumerate}
    \item There exists a line $\ell \subset \PP^2_x$ and a line $\ell' \subset \PP^2_y$ such that for each $i$, 
    either $x_i \in \ell$ or $y_i \in \ell'$ (both may happen for a given $i$). 
    \item There are two points $e \in \PP^2_x$ and $e' \in \PP^2_y$ and a $\PP^1$-homography sending the pencil of lines through $e$ to the pencil of lines through $e'$ such that 
    $\ell_{e x_i} \mapsto \ell_{e' y_i}$ for each $i$.
    \item There is some 
    $T \in \textup{PGL}(3)$ such that  
    $y_i$ lies on the line with normal vector 
    $Tx_i$ for each $i$. 
\end{enumerate}
\end{thm}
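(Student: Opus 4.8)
The plan is to reduce the main equivalence to elementary linear algebra and then read off the three cases from the rank of the witnessing matrix. First I would note that for $k=9$ the matrix $Z$ is square of size $9\times 9$, so it is rank deficient precisely when $\det(Z)=0$, equivalently when its nullspace $\mathcal{N}_Z$ is nonzero. As recorded in the introduction, $\operatorname{vec}(M)\in\mathcal{N}_Z$ if and only if $y_i^\top M x_i=0$ for all $i$. Setting $T=M$, this is exactly the condition $y_i^\top(Tx_i)=0$, which geometrically says that $y_i$ lies on the line $n_i^\perp$ with normal vector $n_i:=Tx_i$. This already establishes the stated equivalence; the remaining work is only to interpret the condition according to $\rank(T)\in\{1,2,3\}$, and conversely to check that each geometric configuration produces such a $T$.

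The two extreme cases are immediate. For $\rank(T)=3$ there is nothing to do: $T\in\PGL(3)$ and the condition $y_i^\top(Tx_i)=0$ is case (3) verbatim. For $\rank(T)=1$ I would write $T=ab^\top$, so that $y_i^\top T x_i=(y_i^\top a)(b^\top x_i)$ factors; its vanishing forces, for each $i$, either $y_i^\top a=0$ or $b^\top x_i=0$, i.e. either $y_i\in\ell'$ (the line in $\PP^2_y$ with normal $a$) or $x_i\in\ell$ (the line in $\PP^2_x$ with normal $b$), which is exactly case (1).

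The rank-two case carries the real geometric content and is where I expect the only genuine care to be needed. Let $e$ span $\ker T$ and $e'$ span $\ker T^\top$. Since $e'^\top T=0$, every normal $n_i=Tx_i$ lies in $(e')^\perp$ and satisfies $e'^\top n_i=0$, so $e'$ lies on each line $n_i^\perp$; thus all of these lines belong to the pencil through $e'$. Dually, because $Te=0$, the map $x\mapsto Tx$ is constant along every line through $e$, and $T$ descends to a linear isomorphism $\CC^3/\langle e\rangle\xrightarrow{\sim}\operatorname{im}(T)=(e')^\perp$. This isomorphism is precisely a $\PP^1$-homography from the pencil of lines through $e$ to the pencil of lines through $e'$, sending the point $[x_i]$ (the line $\ell_{ex_i}$) to the line $n_i^\perp$. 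The constraint $y_i^\top Tx_i=0$ says $y_i\in n_i^\perp$, so (when $y_i\neq e'$) this image line is $\ell_{e'y_i}$, yielding $\ell_{ex_i}\mapsto\ell_{e'y_i}$ as in case (2). I would remark in passing that the degenerate incidences $x_i=e$ or $y_i=e'$ make $y_i^\top Tx_i=0$ automatic and are absorbed harmlessly into the same picture.

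Finally, for the converse direction each of the three configurations manifestly produces a nonzero $T$ of the corresponding rank with $y_i^\top Tx_i=0$ for all $i$ (a rank-one factorization from two incident lines, a rank-two map realizing the prescribed pencil homography, or the given $T\in\PGL(3)$), hence a nonzero element of $\mathcal{N}_Z$, closing the equivalence. The single step deserving attention is the identification of the rank-two collineation of pencils, so I would take care to state cleanly how $T$ passes between normal vectors and the lines they cut out, and to confirm that the induced pencil map is indeed a projective isomorphism.
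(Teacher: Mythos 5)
Your proposal is correct, and its overall architecture matches the paper's: the main equivalence is the trivial observation that $Z$ is rank deficient iff $\mathcal{N}_Z$ contains a nonzero $T$, i.e.\ $y_i^\top T x_i = 0$ for all $i$, followed by a case analysis on $\rank(T)$, with the rank-one and rank-three cases handled identically to the paper (factor $T = ab^\top$, resp.\ nothing to do). The converse direction is, as you say, immediate in both treatments, since each configuration hands you a nonzero element of $\mathcal{N}_Z$.

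Where you genuinely diverge is the rank-two case, which is the only substantive part of the proof. The paper proceeds by an explicit construction: it picks an auxiliary line $\ell$ with normal $n$ not passing through $e$, and shows that the matrix $T[n]_\times$ is a $\PP^1$-homography on normal vectors, verifying directly that $T[n]_\times(e \times x_i) \sim Tx_i$ and that $Tx_i$ is the normal of $\ell_{e'y_i}$ (using $(e')^\top T = 0$ and $y_i^\top T x_i = 0$). You instead argue intrinsically: since $\ker T = \langle e\rangle$ and $\operatorname{im}(T) = (e')^{\perp}$, the map $T$ descends to a linear isomorphism $\CC^3/\langle e\rangle \to (e')^{\perp}$, and identifying the pencil through $e$ with $\PP(\CC^3/\langle e\rangle)$ and the pencil through $e'$ with $\PP((e')^{\perp})$ (via $n \mapsto n^{\perp}$) exhibits the pencil map as a projective isomorphism with no choices made. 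Your route is cleaner and makes well-definedness transparent, at the cost of producing no explicit matrix; the paper's route requires the auxiliary choice of $\ell$ but yields the concrete representative $T[n]_\times$, which it then exploits computationally in the examples following the theorem (e.g.\ verifying $[e']_\times Y = (T[\bar\ell]_\times)[e]_\times X$). Both treatments handle the degenerate incidences $x_i = e$ or $y_i = e'$ the same way, by remarking that the constraint is automatic there and the pencil correspondence is only asserted for the remaining indices.
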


\begin{proof}
The first statement is trivial. The matrix $Z$ is rank deficient if and only if $\mathcal{N}_Z \subset \PP^8$ contains at least one point. 
Representing such a point by $T \in \PP(\CC^{3 \times 3})$ we have 
$(x_i^\top \otimes y_i^\top) \textup{vec}(T) = y_i^\top (Tx_i)=0$ for all $i=1, \ldots,9$.

\begin{enumerate}
\item 
If $\rank(T) = 1$, then $T = uv^\top$ for some $u,v \in \CC^3$. 
Therefore, $(y_i^\top u) (v^\top x_i) = 0$ for all $i=1, \ldots 9$ which is equivalent to saying that 
for each $i$, at most one of 
$u^\top y_i$ or $v^\top x_i$ can be non-zero. Therefore there exists lines $\ell$ (with normal $v$) and $\ell'$ (with normal $u$) 
such that for each $i$, either $x_i \in \ell$ or 
$y_i \in \ell'$.

\item Suppose $\rank(T)=2$. 
Let $e \in \PP^2_x$ be the unique point in the right nullspace of $T$ and $e' \in \PP^2_y$ be the unique point in the left nullspace of $T$.  The pencil of all lines through $e$ (respectively $e'$) can be identified with $\PP^1$. 

Pick any line $\ell$ not passing through $e$ and suppose its normal is $n$. Then 
the projective transformation 
$T [n]_\times$ is a $\PP^1$-homography that takes $\ell_{e x_i} \to \ell_{e' y_i}$ \cite[Result 9.5]{hartley-zisserman-2004}. 
Indeed, suppose the intersection of $\ell$ and $\ell_{e x_i}$ is 
$u_i$. Since $u_i$ is orthogonal to both $n$ and $e \times x_i$, 
we have that $u_i \sim n \times (e \times x_i) = [n]_\times (e \times x_i)$. 
Since $u_i$ lies on $\ell_{e x_i}$, we have that $u_i = \lambda e + \mu x_i$ for some scalars $\lambda, \mu$,  and since $\ell$ does not contain $e$, $u_i \neq e$ which implies that $\mu \neq 0$. 
Therefore, 
$$T [n]_\times (e \times x_i) = T u_i = \lambda T e + \mu T x_i =  0 + \mu Tx_i \sim Tx_i$$ 
which says that the normal of $\ell_{e x_i}$ is mapped to $Tx_i$ by 
$T[n]_\times$. We just need to argue that $Tx_i$ is the normal 
of $\ell_{e' y_i}$ to finish the proof.  For this check that 
$(e')^\top Tx_i = 0$ since $(e')^\top T =0$ and 
$y_i^\top T x_i = 0$ by assumption. Therefore the line spanned by 
$e'$ and $y_i$ has normal $Tx_i$.

\item If $\rank(T) = 3$ then $T$ is a homography (invertible projective transformation). 
Then $y_i^\top Tx_i = 0$ for all $i = 1, \ldots, 9$ implies that 
$y_i$ lies on the line with normal $Tx_i$ for each $i$. 
\end{enumerate}

\end{proof}

\begin{rem}
In the proof of (2), if 
$x_i = e$ for some $i$ then $[e]_\times e = 0$ and similarly, if $y_j = e'$ for some $j$ then 
$[e']_\times y_j = 0$. Therefore, the $\PP^1$-homography will not work for the indices $i,j$ where $x_i = e$ or $y_j = e'$. 
\end{rem}

\begin{rem}
    As we saw, if seven of the nine points on either side are in a line then the rank of $Z_9$ will drop. Condition (1) allows for the situations where $3 \leq s \leq 6$ points on one side are in a line and the $9-s$ complementary $y$ points are in a line.
\end{rem}

\begin{ex}

\begin{enumerate}
    \item Take $x_i$ to be the columns of the matrix $X$ and $y_i$ to be the columns of the matrix $Y$:
    \begin{align}
        X = \begin{bmatrix}
     0&0&0&0&1&-1&1&1&1\\
     1&1&1&-1&1&1&0&1&-1\\
     0&1&2&1&0&1&1&1&-1
     \end{bmatrix}
\quad \quad 
        Y = \begin{bmatrix}
     -1&1&0&0&1&1&1&-1&1\\
     0&1&-1&1&0&0&0&0&0\\
     2&1&1&1&0&1&2&1&3
     \end{bmatrix}.
    \end{align}
    One can check that all $8 \times 9$ submatrices of $Z$ have rank $8$. If the coordinates of $\PP^2$ are $u_1,u_2,u_3$ then $x_1, \ldots, x_4$ lie on the line $u_1 = 0$ and $y_5, \ldots, y_9$ lie on the line $u_2=0$ and $Z$ must drop rank by Condition (1). Indeed, the unique element in the nullspace of $Z$ is the rank-one 
    matrix
    \begin{align}
    T = \begin{bmatrix} 0 & 0 & 0 \\ 1 & 0 & 0 \\ 0 & 0 & 0 \end{bmatrix}.
    \end{align}
    
    \item Take $x_i$ to be the columns of the matrix $X$ and $y_i$ to be the columns of the matrix $Y$:
    \begin{align}
        X = \begin{bmatrix}
      1&0&0&1&1&1&0&1&2\\
      0&1&0&1&1&0&1&2&1\\
      0&0&1&1&0&1&1&1&1
      \end{bmatrix}
      \quad\quad
      Y = \begin{bmatrix}
      1&0&0&1&1&0&1&2&1\\
      0&1&0&1&0&1&1&1&4\\
      0&0&1&1&1&1&0&1&3
      \end{bmatrix}.
\end{align}
Again, $Z$ and all its $8 \times 9$ submatrices  have rank $8$. The unique element in $\mathcal{N}_Z$ is the rank-two matrix
\begin{align}
    T =\begin{bmatrix}
      0&0&-1\\
      0&0&1\\
      -1&1&0
      \end{bmatrix}.
\end{align}
 The points $e = e' = (1,1,0)^\top$ are generators of the right and left nullspaces of $T$. 
 Note that $x_5 = e$ and $y_7 = e'$. 
 Pick $\bar{\ell} = (1,2,3)^\top$. 
 Then $e^\top \bar{\ell} \neq 0$. 
 Now check that $[e']_\times Y = (T [\bar{\ell}]_\times) [e]_\times X$. Indeed, 
 $$ [e]_\times = [e']_\times = \begin{bmatrix}
      0&0&1\\
      0&0&-1\\
      -1&1&0\end{bmatrix}, \quad \quad [\bar{\ell}]_\times = \begin{bmatrix}
      0&-3&2\\
      3&0&-1\\
      -2&1&0\end{bmatrix}, \textup{ and } $$
      \begin{align} \begin{split}
      [e']_\times Y
      &=\begin{bmatrix}
      0&0&1&1&1&1&0&1&3\\
      0&0&-1&-1&-1&-1&0&-1&-3\\
      -1&1&0&0&-1&1&0&-1&3\end{bmatrix}\\
      &\sim \begin{bmatrix}
      0&0&3&3&0&3&3&3&3\\
      0&0&-3&-3&0&-3&-3&-3&-3\\
      3&-3&0&0&0&3&-3&-3&3\end{bmatrix}
      = (T [\bar{\ell}]_\times) [e]_\times X
      \end{split}
      \end{align}
      except in the columns of $X$ and $Y$ where $x_i = e$ and $y_j = e'$.

      Here is another example where the epipoles do not appear among the $x_i$'s or $y_j$'s.
      Take $x_i$ to be the columns of the matrix $X$ and $y_i$ to be the columns of the matrix $Y$:
\begin{align}
X = \begin{bmatrix}
      1&1&1&1&1&1&1&1&1\\
      0&1&0&1&2&0&2&-1&-1\\
      0&0&1&1&0&2&1&1&-1\end{bmatrix}, \quad \quad 
Y = \begin{bmatrix}
      1&1&1&1&1&1&1&1&1\\
      0&1&0&1&0&2&1&1&-2\\
      0&0&1&1&2&1&2&-1&-1\end{bmatrix}.
\end{align}
 The unique element in $\mathcal{N}_Z$ is the rank-two matrix 
\begin{align}
    T =\begin{bmatrix}
       0&2&1\\
      -1&-1&0\\
      -2&0&1
      \end{bmatrix}.
\end{align}
 The points $e = (-1,1,-2)^\top$ and $e' = (1,2,-1)^\top$ generate the right and left nullspaces of $T$. 
 Pick $\bar{\ell} = e$. Then $e^\top e \neq 0$. Now check that $[e']_\times Y = (T [e]_\times) [e]_\times X$. Indeed, 
      \begin{align} \begin{split}
      [e']_\times Y
      &=\begin{bmatrix}
      0&1&2&3&4&4&5&-1&-4\\
      -1&-1&-2&-2&-3&-2&-3&0&0\\
      -2&-1&-2&-1&-2&0&-1&-1&-4
      \end{bmatrix}\\
      &\sim \begin{bmatrix}
       0&-12&-6&-18&-24&-12&-30&6&18\\
      6&12&6&12&18&6&18&0&0\\
      12&12&6&6&12&0&6&6&18\end{bmatrix}
      = (T [e]_\times) [e]_\times X.
      \end{split}
      \end{align}

 \item Take $x_i$ to be the columns of the matrix $X$ and $y_i$ to be the columns of the matrix $Y$:
 \begin{align}
     X = \begin{bmatrix}
      1&0&0&1&1&1&0&1&2\\
      0&1&0&1&1&0&1&2&-3\\
      0&0&1&1&0&1&1&1&1
      \end{bmatrix}
      \quad \quad 
     Y = \begin{bmatrix}
      1&0&0&1&1&0&1&2&15\\
      0&1&0&1&0&1&1&1&4\\
      0&0&1&1&1&1&0&0&-5
      \end{bmatrix}.
 \end{align}
 The unique element in $\mathcal{N}_Z$ is the rank-three matrix
\begin{align}
    T = \begin{bmatrix}
      0&1&-4\\
      1&0&3\\
      -4&3&0
      \end{bmatrix}.
\end{align}
By construction, $y_i^\top T x_i = 0$ for all $i=1, \ldots, 9$.
\end{enumerate}

\end{ex}

\section{Conclusion}\label{sec:conclusion}
In combination with \cite{rankdrop1}, we now have a complete characterization of how rank deficiency of the matrix $Z=(x_i^\top\otimes y_i^\top)_{i=1}^k$ occurs for all values of $k=2,\ldots,9$. We have also demonstrated a strong correspondence between lines in $\PP(\CC^{3\times 3})$, quadric surfaces in $\PP^3$, and quadratic Cremona transformations of $\PP^2$ under appropriate genericity assumptions, which we have named the {\em trinity correspondence}. We conclude  with a simple corollary of our work that highlights the geometry of reconstructions of semi-generic point pairs of sizes six, seven and eight.

\begin{cor}\label{cor:quadrics are rank drop}
Let $(x_i,y_i)_{i=1}^k\subset\PP^2\times\PP^2$ be semi-generic. Then we get the following:
\begin{itemize}
    \item When $k=6$, $Z_6$ is rank deficient exactly when a reconstruction $p_1,\ldots,p_6,c_1,c_2$ is a Cayley octad (eight points in the intersection of three generic quadrics).
    \item When $k=7$, $Z_7$ is rank deficient exactly when the points $p_1,\ldots,p_7,c_1,c_2$ of any reconstruction lie on a quartic curve that arises as the intersection of two quadrics.
    \item When $k=8$, $Z_8$ is rank deficient exactly when the points $p_1,\ldots,p_8,c_1,c_2$ of any reconstruction lie on a quadric.
\end{itemize}
\end{cor}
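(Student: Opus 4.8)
The plan is to deduce all three statements from a single dimension count relating the size of the nullspace $\mathcal{N}_Z$ to the dimension of the linear system of quadrics in $\PP^3$ through a reconstruction. First I would record that under semi-genericity rank deficiency forces $\rank(Z_k)=k-1$ exactly: every $(k-1)$-subset of the point pairs is generic, so the corresponding $(k-1)\times 9$ submatrix has rank $k-1$, whence $\rank(Z_k)\geq k-1$. Fix a rank-two matrix $F\in\mathcal{N}_Z$; it is the fundamental matrix of a pair of projections $\pi_1,\pi_2$ with centers $c_1,c_2$, and yields world points $p_1,\dots,p_k$ with $\pi_1(p_i)=x_i$ and $\pi_2(p_i)=y_i$, that is, a reconstruction.

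The crux is a linear refinement of Lemma~\ref{lem:lines through F gives quadric}. Consider the linear map $M\mapsto Q_M$ sending $M\in\CC^{3\times 3}$ to the quadratic form $\pi_2(u)^\top M\pi_1(u)$ on $\PP^3$. As in the proof of that lemma its kernel is exactly ${\rm span}(F)$ and its image is the full $8$-dimensional space of quadrics through $c_1,c_2$. Moreover $Q_M$ vanishes at $p_i$ if and only if $y_i^\top M x_i=0$, so $Q_M$ passes through the whole reconstruction $c_1,c_2,p_1,\dots,p_k$ if and only if $M\in\mathcal{N}_Z$. Since $Q_M$ depends only on $M$ modulo ${\rm span}(F)$ and $F\in\mathcal{N}_Z$, the quadrics through the reconstruction are parameterized linearly by $\mathcal{N}_Z/{\rm span}(F)$. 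Hence the number of linearly independent quadrics through the reconstruction is $\dim\mathcal{N}_Z-1=(9-\rank(Z_k))-1=8-\rank(Z_k)$, which equals $9-k$ when $Z_k$ is rank deficient and $8-k$ when $Z_k$ has full rank. Note this count depends only on $\rank(Z_k)$, not on the chosen $F$, so it holds for every reconstruction.

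Each case then follows by reading off this count in both directions. For $k=8$ the number of independent quadrics jumps from $0$ (full rank) to $1$ (rank deficient), so $Z_8$ is rank deficient precisely when a quadric passes through the reconstruction; for $k=7$ it jumps from $1$ to $2$, so rank deficiency is equivalent to two independent quadrics through the reconstruction, whose complete intersection is the asserted quartic curve; for $k=6$ it jumps from $2$ to $3$, so rank deficiency is equivalent to a net of quadrics through the eight points $c_1,c_2,p_1,\dots,p_6$, that is, to these points forming a Cayley octad. In the forward direction rank deficiency supplies the required number of independent elements of $\mathcal{N}_Z/{\rm span}(F)$; conversely a quadric, a quartic realized as an intersection of two quadrics, or a Cayley octad (base locus of a net) produces $1$, $2$, or $3$ independent such elements, forcing $\dim\mathcal{N}_Z$ up and $\rank(Z_k)$ down. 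For $k=8$ one may instead quote Theorem~\ref{thm:trinity correspondence (vision)} directly, since the vision trinity already attaches a permissible quadric through the reconstruction to the line $\ell=\mathcal{N}_Z$.

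The main obstacle is the genericity bookkeeping needed to upgrade ``two (resp.\ three) independent quadrics'' to ``quartic curve'' (resp.\ ``Cayley octad''). For $k=7$ I must guarantee that two of the quadrics are distinct with no common component, so that their intersection is a genuine one-dimensional quartic (a complete intersection) rather than a degenerate locus; for $k=6$ I must guarantee that the net has $0$-dimensional base locus, so that its common zeros are exactly the eight reconstruction points and not a positive-dimensional variety. Both follow from semi-genericity---the quadrics in the pencil or net can be taken smooth and permissible, as in the trinity correspondence---and this transversality check, together with confirming that the base points coincide with the reconstruction, is where the argument requires the most care.
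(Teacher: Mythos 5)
Your central dimension count is correct and is essentially the same mechanism the paper uses: the linear map $M\mapsto \pi_2(u)^\top M\pi_1(u)$ has kernel ${\rm span}(F)$ and image the full space of quadrics through $c_1,c_2$ (this is the content of Lemma~\ref{lem:lines through F gives quadric}), and restricting it to $\mathcal{N}_Z$ identifies quadrics through the reconstruction with $\mathcal{N}_Z/{\rm span}(F)$, i.e., with lines through $F$ inside $\mathcal{N}_Z$. The paper runs exactly this correspondence case by case ($k=8$: a single line; $k=7$: a pencil of lines; $k=6$: a net of lines), so your uniform count $8-\rank(Z_k)$ is a clean repackaging rather than a different route, and your converse directions (independent quadrics through a reconstruction force $\dim\mathcal{N}_Z$ up and the rank down) are complete. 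The $k=8$ case is done, and the $k=7$ forward direction is acceptable as sketched: semi-genericity gives a generic line through $F$ in the plane $\mathcal{N}_{Z_7}$ (that plane meets $\mathcal{D}$ in a cubic curve of rank-two matrices), hence a smooth quadric $Q_1$, and any independent $Q_2$ then cuts $Q_1$ in a genuine quartic curve.

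For $k=6$, however, the forward direction has a genuine gap. To conclude that a reconstruction is a Cayley octad you must show that the base locus of the net of quadrics is \emph{exactly} the eight points $p_1,\hdots,p_6,c_1,c_2$ --- no extra points and, in particular, nothing positive-dimensional. You assert this ``follows from semi-genericity'' because the quadrics ``can be taken smooth and permissible,'' but smoothness of members of the net does not control its base locus: a net of smooth quadrics can still have a common line or conic (for instance when $Q_1\cap Q_2$ is reducible and $Q_3$ contains a component of it), so this is not a transversality statement that comes for free. The paper closes this step with a different argument: any point $p'$ in the base locus other than $c_1,c_2$ satisfies $\pi_2(p')^\top M\pi_1(p')=0$ for every $M\in\mathcal{N}_{Z_6}$, so the pair $(\pi_1(p'),\pi_2(p'))$ is a rank-drop pair for the five generic pairs obtained by deleting any one of the six; by the uniqueness statement of Lemma 6.1 of \cite{rankdrop1}, this forces $(\pi_1(p'),\pi_2(p'))=(x_i,y_i)$ for some $i$, hence $p'=p_i$. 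Without this argument (or an equivalent one, e.g., proving $Q_1\cap Q_2$ is an irreducible elliptic quartic, so that the quadrics containing it form only a pencil and $Q_3$ meets it in finitely many points), your proof establishes only that the eight points lie on a net of quadrics, not that they constitute a Cayley octad.
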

\begin{proof}

When $k=8$, the matrix $Z_8$ is rank deficient exactly when $\mathcal{N}_{Z_8}$ is a line. By the semi-genericity of the point pairs, this line is $P$-generic and does not contain any rank one matrices. Any reconstruction of the point pairs corresponds to a fundamental matrix $F$ on this line and by Lemma~\ref{lem:lines through F gives quadric}, the reconstruction lies on a quadric. Similarly, if the point pairs have a reconstruction, given by some fundamental matrix $F$, which lies on a quadric then there is a corresponding line through $F$ in $\mathcal{N}_{Z_8}$ and $Z_8$ is rank deficient.

When $k=7$, $Z_7$ is rank deficient exactly when $\mathcal{N}_{Z_7}$ is a plane. Given any reconstruction $p_1,\ldots, p_7,c_1,c_2$ of the point pairs, let $F$ be the corresponding fundamental matrix.
By semi-genericity of the point pairs, $\mathcal{N}_{Z_7}$ is a generic plane that intersects $\mathcal{D}$ in a curve $C$ of rank-two matrices. If we take any two lines through $F$ in $\mathcal{N}_{Z_7}$ then as in Lemma~\ref{lem:lines through F gives quadric} we obtain two quadrics $Q_1,Q_2$ whose intersection is a quartic curve through the reconstruction.  Similarly, if any reconstruction corresponding to a fundamental matrix $F'$ lies on two distinct quadrics then there are two distinct lines through $F'$ in $\mathcal{N}_{Z_7}$ and $Z_7$ is rank deficient.

For $k=6$, $Z_6$ is rank deficient if and only if 
$\mathcal{N}_{Z_6}$ is a $3$-dimensional plane. Equivalently, every rank- two matrix $F\in\mathcal{N}_{Z_6}$ lies on a net of lines in $\mathcal{N}_{Z_6}$, which corresponds to a net of quadrics containing the reconstruction corresponding to $F$. It follows that if the reconstruction lies on a Cayley octad $Q_1\cap Q_2\cap Q_3$ then $Z_6$ is rank deficient. For the other direction, suppose $Z_6$ is rank deficient. Then the reconstruction lies on a net of quadrics $Q_1\cap Q_2\cap Q_3$ and we need to show that this intersection contains exactly the $8$ points $\{p_i\}_{i=1}^6,c_1,c_2$. If $p'\in Q_1\cap Q_2\cap Q_3$ is any point distinct from $c_1,c_2$, then $\pi_2(p')^\top M\pi_1(p')=0$ for all $M\in\mathcal{N}_{Z_6}$. Due to semi-genericity, the hypothesis of \cite{rankdrop1}[Lemma 6.1] holds for any subset of $5$ point pairs, and it follows that $(\pi_1(p'),\pi_2(p'))=(x_i,y_i)$ for some $i$. We can conclude that $p'=p_i$ and the intersection is indeed a Cayley octad.
\end{proof}

\bibliography{Part2}

@article{bratelund,
title = {Critical configurations for two projective views, a new approach},
journal = {Journal of Symbolic Computation},
volume = {120},
year = {2024},
author = {M. Bråtelund}
}

@book {shafarevich,
    AUTHOR = {Shafarevich, I. R.},
     TITLE = {Basic algebraic geometry 1: Varieties in Projective Space},
   EDITION = {Third},
 PUBLISHER = {Springer, Heidelberg},
      YEAR = {2013},
}

@article{hartley-kahl,
title = {Critical configurations for projective reconstruction from multiple views},
author = {R. Hartley and F. Kahl},
journal = {International Journal of Computer Vision},
year = {2007},
volume = {71},
number = {1},
pages = {5-47}
}

@article {diller,
    AUTHOR = {Diller, J.},
     TITLE = {Cremona transformations, surface automorphisms, and plane
              cubics},
   JOURNAL = {Michigan Math. J.},
    VOLUME = {60},
      YEAR = {2011},
    NUMBER = {2},
     PAGES = {409--440},
}

@article {rankdrop1,
author = {Connelly, E. and Agarwal, S. and Ergur, A. and Thomas, R.R.},
title = {The Geometry of Rank Drop in a Class of Face-Splitting Matrix Products: Part {I}},
journal = {Advances in Geometry (in print)},
year = 2024
}

@book{hartley-zisserman-2004,
  author    = {R. Hartley and
               A. Zisserman},
  title     = {Multiple {V}iew {G}eometry in {C}omputer {V}ision},
  publisher = {Cambridge University Press},
  year      = {2004},
  url       = {https://doi.org/10.1017/cbo9780511811685},
  doi       = {10.1017/cbo9780511811685},
  isbn      = {9780511811685},
  timestamp = {Tue, 09 Jul 2019 16:30:03 +0200},
  biburl    = {https://dblp.org/rec/books/cu/HZ2004.bib},
  bibsource = {dblp computer science bibliography, https://dblp.org}
}

@article{coble,
    AUTHOR = {Coble, A.},
     TITLE = {Point sets and allied {C}remona groups. {I}},
   JOURNAL = {Trans. Amer. Math. Soc.},
  FJOURNAL = {Transactions of the American Mathematical Society},
    VOLUME = {16},
      YEAR = {1915},
    NUMBER = {2},
     PAGES = {155--198},
}

\end{document}